\pgfplotsset{compat=1.16}
\xpatchcmd{\proof}{\itshape}{\bf}{}{}
\xpatchcmd{\example}{\itshape}{\bf}{}{}
\numberwithin{equation}{section}
\newtheorem{thm}{Theorem}[section]
\newtheorem{lemma}[thm]{Lemma}
\newtheorem{prop}[thm]{Proposition}
\newtheorem{cor}[thm]{Corollary}
\theoremstyle{definition}
\newtheorem{rmk}[thm]{Remark}
\theoremstyle{remark}
\newcommand{\R}{{\mathbb R}}
\newcommand{\C}{{\mathbb C}}
\newcommand{\Z}{{\mathbb Z}}
\DeclareMathOperator{\E}{\mathbb{E}}
\let\P\undefined
\DeclareMathOperator{\P}{\mathbb{P}}
\newcommand{\Var}{\mathrm{Var}}
\newcommand{\cP}{{\mathcal P}}
\newcommand{\CC}{\mathcal{C}}
\newcommand{\hA}{{\widehat A}}
\newcommand{\hH}{{\widehat H}}
\newcommand{\hS}{{\widehat S}}
\newcommand{\hOmega}{{\widehat \Omega}}
\newcommand{\hX}{{\widehat X}}
\newcommand{\hbeta}{{\hat \beta}}
\newcommand{\hmu}{{\hat \mu}}
\newcommand{\htau}{{\hat \tau}}
\newcommand{\hpi}{{\hat \pi}}
\newcommand{\eps}{\varepsilon}
\renewcommand{\Re}{\operatorname{Re}}
\newcommand{\TV}{\mathrm{TV}}
\newcommand{\floor}[1]{\lfloor #1 \rfloor}
\newcommand{\bone}{\mathbf{1}}
\newcommand{\mbi}{\mathbf{i}}
\newcommand*\circled[1]{\tikz[baseline=(char.base)]{
    \node[shape=circle,draw,inner sep=0.15em] (char) {$\mathrm{#1}$};}}
\title{Rates of memory loss for null recurrent Markov chains}
\author{Ilya Chevyrev\thanks{
Institut für Mathematik, TU Berlin, 10623 Berlin, Germany
and
School of Mathematics, The University of Edinburgh, Edinburgh, EH9 3FD, UK.
\href{mailto:ichevyrev@gmail.com}{\tt ichevyrev@gmail.com}}
\and Alexey Korepanov\thanks{\href{mailto:khumarahn@gmail.com}{\tt khumarahn@gmail.com}}}
\date{17 January 2025}
\begin{document}

\maketitle

\begin{abstract}
    Orey (1962) proved that for an irreducible, aperiodic, and recurrent Markov chain with transition operator $P$, the sequence $P^n (\mu - \nu)$ converges to zero in total variation for any two probability measures $\mu$ and $\nu$. In other words, all such Markov chains exhibit \emph{memory loss}. While the \emph{rates} of memory loss have been extensively studied for positive recurrent chains, there is a surprising lack of results for null recurrent chains. In this work, we prove the first estimates of memory loss rates in the null recurrent case.
\end{abstract}
% A version of the abstract without the transition operator or other math symbols:
%
% Orey (1962) showed that for an irreducible, aperiodic, and recurrent Markov chain,
% the difference between any two initial probability measures decays to zero in total variation as the chain evolves.
% This "memory loss" property is remarkably general, applying equally to positive and null recurrent chains.
% Despite extensive research on the rates of convergence for positive recurrent chains,
% the null recurrent case has received no attention.
% This paper fills this gap by establishing the first estimates of memory loss rates for null recurrent Markov chains.

\renewcommand{\baselinestretch}{0.0}\normalsize
\setcounter{tocdepth}{1}
\tableofcontents
\renewcommand{\baselinestretch}{1.0}\normalsize

\section{Introduction}
\label{sec:intro}

Consider a Markov chain $X_0,X_1,\ldots$ taking values in a measurable space $\Omega$ with transition operator $P\colon \cP(\Omega)\to\cP(\Omega)$,
where $\cP(\Omega)$ is the space of probability measures on $\Omega$.
That is, if $X_n \sim \mu$ with $\mu \in \cP(\Omega)$, then $X_{n+1} \sim P \mu$.
A question of fundamental interest is whether the chain is \emph{mixing}, and if so, what is the \emph{mixing rate}.
A well studied notion of mixing is \emph{memory loss:}
the convergence $\|P^n\mu - P^n\nu\|_{\TV} \to 0$ as $n\to\infty$ for $\mu,\nu \in \cP(\Omega)$ that act as initial distributions for $X$,
where $\|\cdot\|_{\TV}$ is the total variation norm.

A celebrated result of Orey~\cite{Orey_59_recurrent,Orey_62_ergodic} (see also~\cite{Jamison_Orey_67_MC,Orey_71_MC}) states that if $X$ is recurrent, irreducible and aperiodic, then $\lim_{n\to\infty}\|P^n\mu - P^n\nu\|_{\TV} = 0$ for all $\mu,\nu \in \cP(\Omega)$.
This result is remarkable in its scope, but of course, at this level of generality, comes with no rate of convergence.

In the case that $X$ is \emph{positive recurrent}, and thus admits an invariant probability measure,
there are well-known results that quantify memory loss,
e.g.\ for so-called \emph{regular} measures, one has $\sum_{n=1}^\infty \|P^n\mu - P^n\nu\|_{\TV} < \infty$, so in particular $\|P^n\mu - P^n\nu\|_{\TV} \lesssim n^{-1}$, see~\cite[Theorem~6.11]{Nummelin_84_MC}.
There is an abundance of criteria for polynomial and geometric rates of convergence, e.g.\ based on estimates of first return times to nice sets, or the existence of Lyapunov functions, see
\cite{Nummelin_Tweedie_78_geom_erg, Nummelin_Tuominen_82_geom_erg, Nummelin_Tuominen_83_rates}
for early works
as well as the books
\cite{Lindvall_02_coupling,Nummelin_84_MC,Benaim_Hurth_2022_MC_Metric,Meyn_Tweedie_09_MC} and the references therein.
For results in the case of subgeometric mixing rates, see for example~\cite{Tuominen_Tweedie_94_rates,Veretennikov_97_mixing,Jarner_Roberts_02_rates,
Douc_et_al_04_subgeo,
Baxendale_05_renewal,
Hairer16_MP,CM23_rates}
and the book~\cite{DMPS18_MC}.
To our knowledge, all of these conditions are restricted to the \emph{positive recurrent} case.

The purpose of this paper is to provide the first rates in convergence $\|P^n\mu - P^n\nu\|_{\TV} \to 0$
for \emph{null recurrent} Markov chains.
We present three results which use different strategies and work under different assumptions, and which are applicable to general Harris chains.

To illustrate the results, consider an aperiodic and recurrent Markov chain $X_n$
on $\Omega = \{0,1,2,\ldots\}$ with connection graph
\begin{equation}
    \label{eq:RMC}
    \begin{aligned}
        \adjustbox{trim=0 1em 0 0.0em,clip=true}{ % left bottom right top
            \begin{tikzpicture}[node distance=5em,minimum size=2em]
                \node[circle,draw] (w1) {0};
                \node[circle,draw] (w2) [right of=w1] {1};
                \node[circle,draw] (w3) [right of=w2] {2};
                \node[circle,draw] (w4) [right of=w3] {3};
                \node[]            (w5) [right of=w4] {$\cdots$};
                \path[->,>=stealth',thick]
                (w2) edge[<-] (w1)
                (w3) edge[<-] (w2)
                (w4) edge[<-] (w3)
                (w5) edge[<-] (w4)
                (w1) edge [<-,out=-150,in=-90,looseness=3.8] node[below,minimum size=1.25em] {$q_0$} (w1)
                (w1) edge [<-,out=-18,in=-155,looseness=1] node[below,minimum size=1.1em,pos=0.95] {$q_1$} (w2)
                (w1) edge [<-,out=-36,in=-145,looseness=1] node[below,minimum size=1.5em,pos=0.95] {$q_2$} (w3)
                (w1) edge [<-,out=-54,in=-135,looseness=0.9] node[below,minimum size=1.5em,pos=0.95] {$q_3$} (w4)
                ;
            \end{tikzpicture}
        }
    \end{aligned}
\end{equation}
and transition probabilities $\P(X_1 = 0 \mid X_0 = n) = q_n$, $\P(X_1 = n+1 \mid X_0 = n) = 1 - q_n$.
Consider initial distributions $\mu = \delta_0$ and $\nu = P^\ell \delta_0$,
where $\delta_n$ gives probability 1 to the state~$\circled{n}$.

\begin{rmk}\label{rem:motivate_model}
The Markov chain~\eqref{eq:RMC} and our choice of initial distributions are far more than a convenient illustration.
As we show in Section~\ref{sec:general},
estimates on $\| P^{n+\ell} \delta_0 - P^n \delta_0 \|_{\TV}$ for~\eqref{eq:RMC}
lead directly to memory loss rates for Markov chains on a general state space with relaxed recurrence
properties, specifically for Harris chains.
\end{rmk}

For $n\geq1$ let
\begin{align*}
    p_n
    = \P(\inf \{k \geq 1 \,:\, X_k = 0\} = n\mid X_0 = 0)
    = (1 - q_0) \cdots (1 - q_{n-2}) q_{n-1}
    \;.
\end{align*}
Suppose that
\[
\sum_{k > n} p_k \leq C n^{-\alpha}
\]
with some $C, \alpha > 0$. If $\alpha > 1$, then $X_n$ is positive recurrent,
and it is generally null recurrent otherwise.

A standard result (see Appendix~\ref{sec:Lind}) for positive recurrent Markov chain is that for $\alpha > 1$,
\begin{equation}
    \label{eq:best}
    \| P^{n+\ell} \delta_0 - P^n \delta_0 \|_{\TV}
    \lesssim \frac{\ell}{n^\alpha}
    \;,
\end{equation}
where $\lesssim$ means a bound up to a multiplicative constant that does not depend on $n$ or $\ell$.
In general, this bound is sharp for $\ell \leq n$, e.g.\
when $\sum_{k>n}p_k = (n+1)^{-\alpha}$,
see Corollary~\ref{cor:lower}.

In the null recurrent case $\alpha \leq 1$, we are not aware of any prior results which give a convergence rate.
Our main results in this situation can be summarised as follows.
\begin{enumerate}[label=(\alph*)]
    \item \label{pt:Ornstein} If $\alpha \in (0, 3/2)$, then $\displaystyle \| P^{n+\ell} \delta_0 - P^n \delta_0 \|_{\TV} \lesssim \frac{\ell^{2/3}}{n^{\alpha/3}}$
        for $\ell$ sufficiently small, for example for $\ell \leq n^{3/2 - \alpha} (\log(1 + n))^{-3/2}$.
       See Theorem~\ref{thm:orn}.

     \item \label{pt:mon} If $p_n$ satisfies the \emph{decreasing failure rate} (DFR), i.e.\ the sequence $p_n / (p_n + p_{n+1} + \cdots)$ is non-increasing, then
        $\| P^{n+\ell} \delta_0 - P^n \delta_0 \|_{\TV}$ is given by an explicit expression,
        regardless of the assumption on the tails $\sum_{k > n} p_k$.
        In particular,
        $\| P^{n+1} \delta_0 - P^n \delta_0 \|_{\TV} = 2 \sum_{k > n+1} p_k$.
        See Theorem~\ref{thm:DFR} and Lemma~\ref{lem:HF}.

    \item \label{pt:gen} Assume a corresponding lower bound $\sum_{k>n}p_k \geq C^{-1}n^{-\alpha}$.
    Then, for $\alpha \in (1/2, 1)$,
    \begin{equation*}
    \| P^{n+\ell} \delta_0 - P^n \delta_0 \|_{\TV} \lesssim \frac{\ell^{2 \alpha - 1}}{n^{2 \alpha - 1}}\;.
    \end{equation*}
    Assume furthermore an upper bound on increments $\sum_{k> n}|p_k - p_{k+1}|
        \lesssim n^{-\alpha-1}$.
        Then, for $\alpha \in (0,1)$,
        \begin{equation}
            \label{eq:best1}
            \| P^{n+\ell} \delta_0 - P^n \delta_0 \|_{\TV}
            \lesssim \frac{\ell^\alpha}{n^\alpha}
            \;.
        \end{equation}
        See Theorem~\ref{thm:general}.
\end{enumerate}

\begin{rmk}\label{rem:lower}
If $n\mapsto \sum_{k>n}p_k$ is regularly varying, then~\eqref{eq:best1} is sharp in the sense that
$\| P^{n+\ell} \delta_0 - P^n \delta_0 \|_{\TV}
            \gtrsim \ell^\alpha n^{-\alpha}$,
        see Corollary~\ref{cor:lower}.
Furthermore, in this case, the obvious upper bound
    $\|P^{n + \ell} \delta_0 - P^n \delta_0\|_{\TV}\lesssim 1$ is sharp for $\ell>n$.
\end{rmk}

\begin{rmk}
The bound~\eqref{eq:best} for $\alpha>1$ follows from the case $\ell=1$ and the triangle inequality.
This is \emph{not} the case for the results in~\ref{pt:Ornstein}-\ref{pt:gen}.
\end{rmk}

\begin{rmk}
    As mentioned in Remark~\ref{rem:motivate_model}, bounds on $\| P^{n+\ell} \delta_0 - P^n \delta_0 \|_{\TV}$
    translate directly to memory loss rates for general Harris chains.
    As a simple example, consider a Markov chain $X$ on a measurable space $\Omega$
    with a recurrent point $x\in\Omega$.
    Define $\tau = \inf\{k\geq1\,:\, X_k=x\}$
    and $p_k = \P( \tau = k \mid X_0 = x)$.
    Suppose that
    $\| P^{n+\ell} \delta_0 - P^n \delta_0 \|_{\TV}
    \lesssim
    \ell^\beta n^{-\alpha}$
    for some $\beta\geq\alpha\geq0$ uniformly in $\ell,n\geq 1$,
    where $P$ is the transition operator for the Markov chain~\eqref{eq:RMC} built with these $p_k$.
    For $\mu\neq \nu\in\cP(\Omega)$, define $\|\mu-\nu\|_\beta = \E_{X_0\sim\lambda} \tau^\beta$ where $\lambda=|\mu-\nu|(\Omega)^{-1}|\mu-\nu| \in \cP(\Omega)$. 
    Then $\|Q^n\mu - Q^n\nu\|_{\TV} \lesssim n^{-\alpha}\|\mu-\nu\|_{\beta}$ where $Q$ is the transition operator of $X$.
    See Proposition~\ref{prop:harris_mixing} for details.
\end{rmk}

Our proofs of~\ref{pt:Ornstein}-\ref{pt:gen} are self-contained and independent of each other.
Moreover, the proofs are elementary yet at times delicate, using only
basic facts about regularly varying functions, Hoeffding's inequality, etc.
We make the following additional remarks.
\begin{itemize}
    \item[\ref{pt:Ornstein}] is based on the famous and elegant coupling of Ornstein~\cite{Ornstein_69_RW1,Ornstein_69_RW2},
        where memory loss is drawn from the times when a symmetric random walk on $\Z$ reaches $0$.
        The proof is short, the assumptions are very general, yet the rate is weaker than in~\ref{pt:mon} or~\ref{pt:gen}, and than in~\eqref{eq:best} for $\alpha>1$.
    \item[\ref{pt:mon}] has an even shorter, essentially algebraic proof, although the assumption is very restrictive.
        The decreasing failure rate is used to verify that the renewal sequence $u_n = \P(X_n = 0 \mid X_0 = 0)$
        is monotone, which in turn allows completely bypassing otherwise substantial difficulties.
        Such monotonicity conditions were considered in~\cite{Isaac_1988_rates, Hansen_Frenk_91_monotonicity}.
        We highlight, however, that we give a simple and possibly new proof of the fact that a decreasing failure rate implies
        monotonicity of $u_n$ (see Lemma~\ref{lem:HF}).
    \item[\ref{pt:gen}] is, we believe, the main contribution of this paper.
        The rate~\eqref{eq:best1} is sharp (see Remark~\ref{rem:lower}) while the assumptions
        are still practical to verify.
        Our proof of~\ref{pt:gen} is the longest part of the paper and is based on analysing
        the renewal sequence $u_n$ using Tauberian arguments, inspired by but not relying on~\cite{GL62}.
        Some key differences are that we bound the increments $|u_n-u_{n+\ell}|$ rather than $u_n$ itself,
        and that we require only upper/lower bounds on the tails $\sum_{k>n}p_k$ instead of assuming
        that the tails are regularly varying.
        (Regular variation of $\sum_{k>n}p_k$ is a standard assumption in renewal theory with infinite mean, see e.g.\ \cite{GL62,Teugels_68_renewal,Eri70, Frenk_82_renewal, Doney_97_renewal_infinite,Caravenna_Doney_19,Berger19_Cauchy},
and we are not aware of any results that do not use this assumption.)
        See Remark~\ref{rem:generalisations} for possible generalisations of~\ref{pt:gen}.
\end{itemize}

We conclude the introduction by outlining the motivation for our study.
Mixing rates are a fundamental property of Markov chains,
as evidenced by the body of results in the positive recurrent case,
thus we believe our findings carry intrinsic interest.

Our motivation originates in smooth ergodic theory, where so-called \emph{non-uniformly expanding}
and \emph{non-uniformly hyperbolic} dynamical systems have an intimate relation to
Markov chains similar to~\eqref{eq:RMC}. Various statistical properties have been proved by
representing these systems as \emph{Young towers}, topological Markov chains with
some nice structure~\cite{Young98,Young99}. Further, connections with
renewal-type Markov chains have been established via Lipschitz continuous and measure preserving
semi-conjugacies~\cite{Kor18,CDKM24}.

The landscape is much like in the probabilistic literature. Memory loss is known to
occur in very general conditions~\cite{Lin71}, see~\cite[Theorem~1.3.3]{Aar97}.
In the positive recurrent case, sharp rates of memory loss have been obtained
using coupling~\cite{Young98,Young99,KorLep21} (c.f.~\cite{Pit74,Lind79})
and renewal theory~\cite{Sar02,Gou04}. In the null recurrent case,
renewal sequences have been accurately estimated~\cite{MelTer11,Ter15}.
However, no results currently address rates of memory loss in the null recurrent case.

Extension of our results to ergodic theory will be a subject of a future study.
In particular, the `practical' assumptions in~\ref{pt:gen} have been chosen with such applications in mind.

\subsubsection*{Organization of the paper}

In Section~\ref{sec:model} we state our main results for the Markov chain~\eqref{eq:RMC}.
In Section~\ref{sec:general} we relate memory loss rates of~\eqref{eq:RMC} to those of Markov chains with a recurrent atom
and, more generally, Harris chains.
In Sections~\ref{sec:Ornstein}-\ref{sec:proof_general} we prove the results from Section~\ref{sec:model}, and in Appendix~\ref{sec:Lind}
we justify~\eqref{eq:best}.

\section{Model chain}
\label{sec:model}

In this section we work with Markov chain $X_n$ from~\eqref{eq:RMC} and
initial measures $\delta_0 $ and $P^{\ell} \delta_0$ with some $\ell \geq 1$.
Informally, we are trying to understand how quickly the Markov chain~\eqref{eq:RMC} forgets a time shift.

\subsection{Notation and preliminaries}
\label{sec:not}

Recall that we suppose that $X_n$ is aperiodic and recurrent.
That is, $\sum_{n \geq 1} p_n = 1$ and there exist $c_\# > 0$ and $N_\# \geq 1$ so that
\begin{equation}\label{eq:aperiodic}
    \gcd \{1\leq k \leq N_\# \,:\, p_k \geq c_\# \} = 1
    \;.
\end{equation}
If $\sum_{n \geq 1} n p_n < \infty$, then $X_n$ is positive recurrent, and $X_n$ is null recurrent otherwise.

Let $u_n$, $n \geq 0$, denote the renewal probabilities
\begin{equation*}
    u_n = \P(X_n=0 \mid X_0=0)
    \;.
\end{equation*}
Equivalently, we can define $u_n$ by $u_0 = 1$ and the \emph{renewal equation}
\begin{equation}
    \label{eq:pu}
    u_n = \sum_{k=1}^n p_k u_{n-k}
    \;, \quad
    n \geq 1
    \;.
\end{equation}
For $n \geq 0$ let
\begin{equation*}
    z_n = \sum_{k > n} p_k
    \;.
\end{equation*}
\begin{rmk}
It is a direct verification that $z_n = z_{n-1} (1 - q_{n-1}) = (1-q_0)\cdots (1-q_{n-1})$ for $n\geq 1$.
In particular $z_nq_n = p_{n+1}$ for $n\geq 0$ and $\sum_{n \geq 0} z_n q_n = 1 = z_0$. That is,
$z_n$ are the weights of the unique (up to scaling) stationary measure on $\Omega$.
\end{rmk}

Our notation allows a convenient expression
\begin{equation*}
    P^n \delta_0
    = \sum_{m=0}^{n} u_{n-m} z_m \delta_m
    \;.
\end{equation*}
Subsequently,
\begin{equation}
    \label{eq:mu_diff}
    P^{n+\ell} \delta_0 - P^n \delta_0
    = \sum_{m=n+1}^{n+\ell} u_{n+\ell-m} z_{m} \delta_{m}
    + \sum_{m=0}^{n} (u_{n+\ell-m} - u_{n-m}) z_m \delta_m
    \;.
\end{equation}
The problem of bounding $\|P^{n+\ell} \delta_0 - P^n \delta_0\|_{\TV}$
is therefore closely tied to the asymptotics of the increments of the renewal sequence $u_n$.

We use $X\lesssim Y$ to denote $X\leq C Y$ for a constant $C>0$ that does not depend on $X,Y$.
We write $X\asymp Y$ if $X\lesssim Y$ and $Y\lesssim X$.
We also write $Y\gg X$ whenever $Y\geq C X$ for some $C$ sufficiently large.
For $x\in\R$, we let $\floor x\in\Z$ denote the floor of $x$.

For probability measures $\mu,\nu$ on a general measurable space $(\Omega,\Sigma)$, recall the identity
\begin{equation}\label{eq:TV_identity}
\|\mu-\nu\|_{\TV} = 2\sup_{A\in \Sigma} |\mu(A)-\nu(A)|\;.
\end{equation}

\subsubsection{Regularly varying functions}
\label{subsec:RV}

Recall that a function $\rho \colon [a,\infty)\to(0,\infty)$ is called \emph{regularly varying (RV) with index $\alpha\in\R$} if $\lim_{x\to\infty} \rho(\lambda x)/\rho(x) = \lambda^\alpha$ for all $\lambda>0$.
A RV function with index $0$ is called slowly varying.
For more details on RV functions, we refer to~\cite{bingham1989regular}.
We will use the following basic facts without explicit mention.
Suppose $\rho$ is RV with index $\alpha$.
\begin{itemize}
\item One has $\rho(x) = x^\alpha L(x)$ where $L$ is slowly varying.
    \item For any compact $I\subset(0,\infty)$, the limit $\lim_{x\to\infty} \rho(\lambda x)/\rho(x) = \lambda^\alpha$ is uniform over $\lambda\in I$.
In particular, for $x,y$ sufficiently large, $\rho(x) \asymp \rho(y)$ whenever $x\asymp y$.
    
    \item \textbf{Karamata's theorem}~\cite[Section~1.5.6]{bingham1989regular}. Suppose $\rho$ is locally bounded.
    If $\alpha>-1$, then $\lim_{x\to\infty} x\rho(x)/\int_a^x \rho(y)\,dy = \alpha+1$.
    If $\alpha < -1$, then $\lim_{x\to\infty} x\rho(x)/\int_x^\infty \rho(y)\,dy = -\alpha-1$.
    If $\alpha=-1$, then $\int_a^x \rho(y)\,dy$ is slowly varying.

    We will often use these facts in the following form: if $\alpha>-1$, then $\int_a^x \rho(y)\,dy \asymp x\rho(x)$ and $\int_0^\delta \rho(x^{-1})^{-1} \, dx \asymp \delta\rho(1/\delta)$
    uniformly in $x$ sufficiently large and $\delta$ sufficiently small.
    Likewise, if $\alpha<-1$, then
    $\int_x^\infty \rho(y)\,dy \asymp x\rho(x)$ and $\int_\delta^{1/a} \rho(x^{-1})^{-1} \, dx \asymp \delta\rho(1/\delta)$.
\end{itemize}
If $\rho$ is defined on $\{k,k+1,\ldots\}$, we say that $\rho$ is RV if $x\mapsto \rho(\floor x)$ is RV.

\subsection{Lower bounds}

We begin with an easy \emph{lower bound} on the memory loss for the Markov chain~\eqref{eq:RMC}.

\begin{lemma}
    \label{lem:lower}
    For all $n,\ell\geq0$
    \begin{equation}
        \label{eq:lower}
        \|P^{n + \ell} \delta_0 - P^n \delta_0\|_{\TV}
        \geq
        2 \sum_{m=n+1}^{n+\ell} u_{n+\ell-m} z_{m}
        \;.
    \end{equation}
\end{lemma}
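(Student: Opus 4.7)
The plan is to use the identity \eqref{eq:TV_identity} with a carefully chosen test set $A\subset\Omega$, and to exploit the fact that $P^n\delta_0$ is supported on $\{0,1,\ldots,n\}$ because in $n$ steps the walk started at $0$ cannot overshoot state $n$.

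Concretely, I would take $A = \{n+1, n+2, \ldots, n+\ell\}$. Then $P^n \delta_0(A) = 0$ by the support observation, while from the formula $P^m\delta_0 = \sum_{j=0}^{m} u_{m-j} z_j \delta_j$ (equivalently, from the first sum on the right-hand side of \eqref{eq:mu_diff}), we read off
\begin{equation*}
P^{n+\ell}\delta_0(A) \;=\; \sum_{m=n+1}^{n+\ell} u_{n+\ell-m} z_m\;.
\end{equation*}
Plugging into \eqref{eq:TV_identity} gives
\begin{equation*}
\|P^{n+\ell}\delta_0 - P^n \delta_0\|_{\TV} \;\geq\; 2\,\bigl|P^{n+\ell}\delta_0(A) - P^n\delta_0(A)\bigr| \;=\; 2\sum_{m=n+1}^{n+\ell} u_{n+\ell-m} z_m\;,
\end{equation*}
which is \eqref{eq:lower}.

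There is essentially no obstacle: the entire content is the choice of the test set $A$, which is natural because the decomposition \eqref{eq:mu_diff} already splits the signed measure $P^{n+\ell}\delta_0 - P^n\delta_0$ into a part supported on $\{n+1,\ldots,n+\ell\}$ and a part supported on $\{0,\ldots,n\}$. Picking $A$ as above isolates the former, and the inequality follows without any cancellation between the two. (In fact, the same argument shows the complementary lower bound $\|P^{n+\ell}\delta_0-P^n\delta_0\|_{\TV} \geq 2\sum_{m=0}^{n}(u_{n+\ell-m}-u_{n-m})^+ z_m$, but only \eqref{eq:lower} is needed here.)
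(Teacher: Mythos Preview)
Your proof is correct and matches the paper's own argument: both take the test set $A=\{n+1,\ldots,n+\ell\}$ in~\eqref{eq:TV_identity} and read off the mass of $P^{n+\ell}\delta_0$ on $A$ from~\eqref{eq:mu_diff}.
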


\begin{proof}
    The result follows from~\eqref{eq:mu_diff} and~\eqref{eq:TV_identity} by taking $A=\{n+1,\ldots,n+\ell\}$ in the latter.
\end{proof}

\begin{cor}
    $\|P^{n + 1} \delta_0 - P^n \delta_0\|_{\TV} \geq 2 z_{n+1}$.
\end{cor}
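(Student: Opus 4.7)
The plan is to apply Lemma~\ref{lem:lower} directly with $\ell=1$. In that case the sum on the right-hand side of~\eqref{eq:lower} has a single term, namely the $m=n+1$ term, which is $u_{n+1-(n+1)}z_{n+1} = u_0 z_{n+1}$. Since $u_0 = \P(X_0 = 0 \mid X_0 = 0) = 1$ by definition of the renewal sequence, this term equals $z_{n+1}$, giving the claimed lower bound $\|P^{n+1}\delta_0 - P^n\delta_0\|_{\TV} \geq 2 z_{n+1}$.

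There is essentially no obstacle: the result is an immediate specialisation of the preceding lemma, used only to highlight a clean consequence for the simplest nontrivial case $\ell = 1$. A one-line proof suffices, and nothing beyond the identity $u_0=1$ and Lemma~\ref{lem:lower} is needed.
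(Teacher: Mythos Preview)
Your proof is correct and matches the paper's approach exactly: the corollary is stated without proof in the paper precisely because it is the immediate $\ell=1$ specialisation of Lemma~\ref{lem:lower}, using $u_0=1$.
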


\begin{cor}
    \label{cor:lower}
    Suppose $n\mapsto z_n$ is RV with index $-\alpha$.
    If $\alpha > 1$, then for $n\geq 0$ and $\ell\geq 1$,
    \begin{equation}\label{eq:lower_recur}
        \|P^{n + \ell} \delta_0 - P^n \delta_0\|_{\TV}
        \gtrsim \sum_{m=n+1}^{n+\ell} z_n
        \;.
    \end{equation}
    so in particular, $\|P^{n + \ell} \delta_0 - P^n \delta_0\|_{\TV} \gtrsim \ell z_n$ for $\ell\leq n$.
    If $\alpha \in (0,1)$, then for $n\geq 0$ and $\ell\geq 1$,
    \[
        \|P^{n + \ell} \delta_0 - P^n \delta_0\|_{\TV}
        \gtrsim 1\wedge \frac{z_n}{z_\ell}
        \;.
    \]
    If $\alpha = 1$, then for $\ell \leq n$,
    \[
        \|P^{n + \ell} \delta_0 - P^n \delta_0\|_{\TV}
        \gtrsim
        z_n\sum_{k=1}^{\ell-1} m(k)^{-1}
        \;, \qquad \text{where} \qquad
        m(n) = \sum_{k=0}^{n-1} z_k
        \;.
    \]
\end{cor}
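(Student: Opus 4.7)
The plan is to begin from the lower bound in Lemma~\ref{lem:lower} and reindex by $j=n+\ell-m$ to obtain
\[
\|P^{n+\ell}\delta_0 - P^n\delta_0\|_{\TV} \;\geq\; 2\sum_{j=0}^{\ell-1} u_j\,z_{n+\ell-j},
\]
then insert sharp asymptotics for the renewal sequence $u_n$ and reduce each regime to a Karamata-type sum estimate. The three cases use three classical renewal theorems, in each of which aperiodicity~\eqref{eq:aperiodic} rules out periodic obstructions: for $\alpha>1$, Blackwell's theorem gives $u_n \to 1/\sum_k kp_k > 0$, so $u_n\asymp 1$; for $\alpha\in(0,1)$, the Garsia--Lamperti theorem yields $u_n \asymp (n z_n)^{-1}$ for $n\geq 1$; and for $\alpha=1$, Erickson's theorem gives $u_n \asymp m(n)^{-1}$.

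The case $\alpha>1$ is then immediate: with $u_j\asymp 1$, the reindexed sum is $\asymp\sum_{m=n+1}^{n+\ell}z_m$, and monotonicity of $z_\cdot$ together with $z_{n+\ell}\asymp z_n$ (valid for $\ell\leq n$ by RV) yields the stated bound and the ``in particular'' consequence. For $\alpha\in(0,1)$ I would split on $\ell\leq n$ versus $\ell>n$. When $\ell\leq n$, all terms $z_{n+\ell-j}$ are RV-comparable to $z_n$ uniformly in $j\in[0,\ell-1]$, so the sum is $\asymp z_n\sum_{j=0}^{\ell-1}u_j$; Karamata's theorem (Section~\ref{subsec:RV}) applied to the RV sequence $(j z_j)^{-1}$ of index $\alpha-1\in(-1,0)$ gives $\sum_{j=1}^{\ell-1}(jz_j)^{-1}\asymp 1/z_\ell$, producing the desired $\gtrsim z_n/z_\ell$. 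When $\ell>n$, I restrict the sum to $j\in[0,\floor{\ell/2}]$, on which $z_{n+\ell-j}\asymp z_\ell$, and the same Karamata step yields $\gtrsim z_\ell\cdot z_\ell^{-1}\asymp 1$. The case $\alpha=1$ proceeds identically to the sub-case $\ell\leq n$, with $\sum_{j=1}^{\ell-1}m(j)^{-1}$ appearing directly in place of $1/z_\ell$, using that $m(\cdot)$ is slowly varying by Karamata applied to $z_\cdot$ of index $-1$.

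The main obstacle is bookkeeping rather than conceptual. One must choose the truncation point ($\floor{\ell/2}$ in the regime $\ell>n$) so that the RV comparisons $z_{n+\ell-j}\asymp z_n$ or $z_{n+\ell-j}\asymp z_\ell$ hold uniformly in $j$ over the retained range; one must ensure that the $j=0$ term and any finite initial segment---where the renewal asymptotics only hold up to bounded multiplicative error---do not spoil the estimates (this is automatic once $\ell$ is sufficiently large and can be absorbed into the implied constant otherwise); and one must verify that $j z_j$ is RV of the claimed index $1-\alpha$ so that Karamata's theorem applies cleanly to the summand $(jz_j)^{-1}$ in the null recurrent ranges.
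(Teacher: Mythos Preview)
Your proposal is correct and follows essentially the same route as the paper: start from Lemma~\ref{lem:lower}, feed in the appropriate renewal asymptotics in each regime (positive recurrent, Garsia--Lamperti, Erickson), and finish with Karamata-type sum comparisons; even the case split $\ell\le n$ versus $\ell>n$ for $\alpha\in(0,1)$ and the respective truncations mirror what the paper does. One small imprecision: you write $u_n\asymp (nz_n)^{-1}$ for $\alpha\in(0,1)$, but Garsia--Lamperti only gives the full two-sided asymptotic for $\alpha>1/2$; what \cite[Theorem~1.1]{GL62} supplies for all $\alpha\in(0,1)$ is $\liminf_{n\to\infty} n z_n u_n>0$, i.e.\ $u_n\gtrsim (nz_n)^{-1}$ for $n$ large, which is exactly what your argument actually uses.
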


\begin{proof}
    If $\alpha > 1$ then $u_n \gtrsim 1$ for sufficiently large $n$.
    Hence~\eqref{eq:lower_recur} follows from~\eqref{eq:lower} and $\|P^{n + \ell} \delta_0 - P^n \delta_0\|_{\TV} \gtrsim \ell z_n$ for $\ell\leq n$ follows since $z$ is RV.

    Suppose $\alpha\in(0,1)$.
    Then by~\cite[Theorem~1.1]{GL62}, $\liminf_{n\to\infty} n z_n u_n > 0$.
    Let $r>0$ such that $u_n \gtrsim n^{-1}z_n^{-1}$ for all $n\geq r$.
    It suffices to consider $r< n$.
    In this case, for $\ell\leq n$, by~\eqref{eq:lower} and Karamata's theorem,
    \begin{align*}
        \|P^{n + \ell} \delta_0 - P^n \delta_0\|_{\TV}
        & \geq
        \sum_{m=n+1}^{n+\ell} u_{n+\ell-m} z_m
        \asymp
        z_n \sum_{k=0}^{\ell - 1} u_k
         \gtrsim
        \sum_{k=r}^{\ell - 1} \frac{z_n}{k z_k}
        \asymp \frac{z_n}{z_\ell}
        \;,
    \end{align*}
    while for $\ell>n$,
    \begin{align*}
        \|P^{n + \ell} \delta_0 - P^n \delta_0\|_{\TV}
        & \geq
        \sum_{m=n+1}^{n+\ell-r} u_{n+\ell-m} z_m
        \gtrsim 1
        \;,
    \end{align*}
    where we used $u_{n+\ell-m} \gtrsim \ell^{-1} z_\ell^{-1}$
    and $z_m \gtrsim z_\ell$.

    Suppose $\alpha = 1$. By~\cite[Theorem~1]{Eri70},
    $\lim_{n \to \infty} m(n) u_n = 1$.
    Then there is $r>0$ so that $u_n \geq \frac{1}{2} m(n)^{-1}$ for all $n \geq r$
    and, repeating the calculation for $\alpha \in (0,1)$, for $r < \ell \leq n$,
    \begin{align*}
        \|P^{n + \ell} \delta_0 - P^n \delta_0\|_{\TV}
        & \gtrsim
        z_n \sum_{k=0}^{\ell-1} u_k
        \gtrsim
        z_n \sum_{k=r}^{\ell-1} m(k)^{-1}
        \;.
        \qedhere
    \end{align*}
\end{proof}

\begin{rmk}
    Suppose that $\sum_{k > n} p_k \lesssim n^{-\alpha}$ with $\alpha > 0$.
    For $\alpha > 1$ (when $X_n$ is positive recurrent),
    $\|P^{n + 1} \delta_0 - P^n \delta_0\|_{\TV} \lesssim n^{-\alpha}$ by~\eqref{eq:best}.
    By Corollary~\ref{cor:lower}, this is sharp.
    A question of special interest and the initial motivation for this work was to investigate
    whether the bound $\|P^{n + 1} \delta_0 - P^n \delta_0\|_{\TV} \lesssim n^{-\alpha}$
    can be extended to $\alpha \leq 1$.
\end{rmk}

\subsection{Upper bounds}

Under very relaxed assumptions, using only an upper bound on the tails $\sum_{k > n} p_k$, we obtain:
\begin{thm}
    \label{thm:orn}
    Suppose that $\sum_{k > n} p_k \leq \rho(n)$, where $\rho$ is RV with index $-\alpha \in (-3/2, 0)$.
    Then
    \begin{equation}
        \label{eq:orn}
        \| P^{n+\ell}\delta_0 - P^n\delta_0 \|_{\TV}
        \lesssim \ell^{2/3} \rho(n)^{1/3}
    \end{equation}
    uniformly in $n \geq 1$ and $\ell \geq 1$ restricted by:
    \begin{itemize}
        \item $\ell \leq \rho(n)n^{3/2} M(n)^{-3/2}$ with $M(n) = 1+\int_1^n \rho(t)\,dt$ if $\alpha = 1$,
        \item $\ell \leq \rho(n)n^{3/2}$ if $\alpha \in (1, \frac32)$.
    \end{itemize}
    In all cases, the proportionality constant depends only on the constants in~\eqref{eq:aperiodic} and on $\rho$.
\end{thm}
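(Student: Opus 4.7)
The plan is to implement Ornstein's coupling. I build two coupled copies $X, X'$ of the Markov chain, both started from $\delta_0$ at time $0$, and seek a random time $m^* \leq n$ such that simultaneously $X_{m^*} = 0$ and $X'_{m^*+\ell} = 0$; from such an $m^*$ the two chains can be continued with shared driving noise so that $X_n = X'_{n+\ell}$, and $\|P^{n+\ell}\delta_0 - P^n\delta_0\|_{\TV}$ is bounded by the probability that no such $m^*$ is found. Using the renewal representation $T_k = \tau_1 + \cdots + \tau_k$ (with $\tau_i$ i.i.d.\ of law $p$) and an independent copy $T_k' = \tau_1' + \cdots + \tau_k'$, such a common zero-time with offset $\ell$ corresponds to an index $k\geq 1$ satisfying $T_k + \ell = T_k'$; equivalently, the symmetric random walk $W_k = \sum_{i=1}^k (\tau_i' - \tau_i)$ must reach the value $\ell$ at some $k$ with $T_k \leq n$. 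This is the ``symmetric random walk on $\Z$ reaches $0$'' cited in the introduction, applied to the walk $\ell - W_k$ starting at $\ell$.

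Since $\tau$ has infinite variance throughout $\alpha \in (0, 3/2)$, a direct CLT on $W$ is not available, so I truncate at a level $M$ to be chosen: set $\tilde\tau_i = \tau_i \wedge M$ and $\tilde W_k = \sum_{i=1}^k (\tilde\tau_i' - \tilde\tau_i)$. A union bound shows the original and truncated walks agree on the first $K$ steps except on an event of probability $\lesssim K \rho(M)$. On the good event, the truncated walk has bounded increments and variance $\sigma_M^2 \lesssim M^2 \rho(M)$ by Karamata, and Hoeffding's (or Bernstein's) inequality combined with a local-limit-theorem-type estimate for the truncated walk yields a lower bound on the probability that $\tilde W_k$ hits the value $\ell$ for some $k \leq K$. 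The constraint $T_K \lesssim n$ is controlled via $\E[\tilde\tau]$, which is bounded for $\alpha > 1$ and of order $M(n) = 1 + \int_1^n \rho(t)\,dt$ for $\alpha = 1$, so $K$ can be taken up to $\asymp n/\E[\tilde\tau]$; this is the origin of the extra $M(n)^{-3/2}$ factor in the $\ell$-range when $\alpha = 1$.

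The three failure sources---truncation ($\lesssim K\rho(M)$), non-hitting of the truncated walk (expressible in terms of $K, M, \ell$, and $\sigma_M$), and time-budget overflow (controlled by concentration of $T_K$)---are balanced by choosing $M$ and $K$ appropriately. The optimisation produces the $1/3$ and $2/3$ exponents in $\ell^{2/3}\rho(n)^{1/3}$, and the stated range restrictions on $\ell$ correspond exactly to the feasibility region of the chosen parameters. The main technical obstacle is that the coupling requires the truncated walk to hit the value $\ell$ \emph{exactly} (for the renewal coincidence $T_k = T_k' + \ell$) rather than merely to come close: because the truncated increments can be as large as $M$, the walk may jump over the target, so Hoeffding/Bernstein must be combined with a local-limit refinement or an overshoot argument to guarantee an exact coincidence with positive probability. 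Handling the three subregimes $\alpha \in (0,1)$, $\alpha = 1$, and $\alpha \in (1, 3/2)$ by a unified choice of $M$ and $K$ (depending only on $\rho$, $n$, $\ell$) is what accounts for the delicate form of the admissible $\ell$-range.
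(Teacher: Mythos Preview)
Your high-level framework (Ornstein-type coupling, reduce to a symmetric random walk on $\Z$ hitting a target, truncate and use Hoeffding) matches the paper, but you are missing the central implementation idea, and the obstacle you flag at the end is a symptom of that.

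The paper does \emph{not} couple two independent copies with $W_k=\sum_{i\le k}(\tau_i'-\tau_i)$. Instead, using aperiodicity it fixes $A$ with $p_A,p_{A+1}>0$, runs one chain $X$, and builds the second chain $Y$ by \emph{sharing} every return-time increment $\Delta a_k$ of $X$ except when $\Delta a_k\in\{A,A+1\}$; on those excursions $Y$ independently resamples from $\{A,A+1\}$ with the conditional law. Consequently the difference $d_k=a_k-b_k$ is a \emph{lazy simple} symmetric random walk with steps in $\{-1,0,1\}$, starting at $\ell$. Exact hitting of $0$ is then automatic (no jump-over), and the reflection principle gives $\P(\sigma>m)\lesssim \ell m^{-1/2}$ directly. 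Truncation and Hoeffding are used afterwards, and for a different purpose: to bound $\P(a_{k_m}\ge n)$, i.e.\ the real time elapsed after $m$ attempts, not to tame the random walk itself. The optimisation $\ell m^{-1/2}+m\rho(n)$ then yields the $2/3$ and $1/3$ exponents.

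In your scheme the increments $\tau_i'-\tau_i$ are heavy-tailed and, even after truncation at $M$, range up to $M$; this creates the exact-hit problem you identify but do not resolve. A local limit theorem or overshoot argument could in principle be made to work, but it would be substantially heavier than the paper's argument and you have not supplied it. The fix is precisely Ornstein's $\{A,A+1\}$ resampling trick, which collapses the difference walk to step size one and removes the obstacle entirely.
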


We prove Theorem~\ref{thm:orn} in Section~\ref{sec:Ornstein}.
The proof is based on quantifying an elegant coupling of Ornstein~\cite{Ornstein_69_RW1,Ornstein_69_RW2}.

\begin{rmk}
    For $\rho(n) = C n^{-\alpha}$ with $\alpha > 1$, the upper bound~\eqref{eq:orn}
    is significantly weaker than~\eqref{eq:best} that works under the same assumptions,
    and this is an inherent limitation of the proof.
    Moreover, for $\alpha \leq 1$, the upper bound~\eqref{eq:orn} is far from the lower bound~\eqref{eq:lower}.
    However, this is the only rate we obtain that works under the minimal assumption $\sum_{k > n} p_k \leq \rho(n)$.
    It remains an open question if~\eqref{eq:orn} can be improved without making extra assumptions.
\end{rmk}

Under a very restrictive assumption we show that the lower the bound~\eqref{eq:lower} is precise:
\begin{thm}
    \label{thm:DFR}
    Assume that $u_n$ is non-increasing with $n$.
    Then
    \begin{equation*}
        \|P^{n + \ell} \delta_0 - P^n \delta_0\|_{\TV}
        =
        2 \sum_{m=n+1}^{n+\ell} u_{n+\ell-m} z_{m}
        \;.
    \end{equation*}
\end{thm}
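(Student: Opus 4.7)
The plan is to read off the total variation norm directly from the explicit expression~\eqref{eq:mu_diff}, using that the sign of each coefficient is determined by the monotonicity of $u_n$. Since the chain lives on the countable state space $\{0,1,2,\ldots\}$, the total variation of a signed measure $\sum_m c_m \delta_m$ is simply $\sum_m |c_m|$, so from~\eqref{eq:mu_diff} we have
\begin{equation*}
    \|P^{n+\ell}\delta_0 - P^n\delta_0\|_{\TV}
    = \sum_{m=n+1}^{n+\ell} u_{n+\ell-m} z_m
    + \sum_{m=0}^{n} |u_{n+\ell-m} - u_{n-m}|\, z_m\;.
\end{equation*}

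First I would use the hypothesis that $u_n$ is non-increasing, which implies $u_{n+\ell-m} - u_{n-m} \leq 0$ for every $m \in \{0,\ldots,n\}$ (since then $n+\ell-m \geq n-m$). Thus the absolute value on the right-hand sum can be dropped, yielding
\begin{equation*}
    \|P^{n+\ell}\delta_0 - P^n\delta_0\|_{\TV}
    = \sum_{m=n+1}^{n+\ell} u_{n+\ell-m} z_m
    + \sum_{m=0}^{n} (u_{n-m} - u_{n+\ell-m})\, z_m\;.
\end{equation*}

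Next I would invoke the fact that $P^{n+\ell}\delta_0$ and $P^n\delta_0$ are both probability measures, so the signed measure $P^{n+\ell}\delta_0 - P^n\delta_0$ has total mass zero. Summing the coefficients from~\eqref{eq:mu_diff} and setting the result to zero gives
\begin{equation*}
    \sum_{m=n+1}^{n+\ell} u_{n+\ell-m} z_m
    = \sum_{m=0}^{n} (u_{n-m} - u_{n+\ell-m})\, z_m\;,
\end{equation*}
and substituting this identity into the previous display yields the claimed formula. There is really no main obstacle here: the monotonicity of $u_n$ ensures that the positive and negative parts of the Jordan decomposition of $P^{n+\ell}\delta_0 - P^n\delta_0$ are supported on $\{n+1,\ldots,n+\ell\}$ and $\{0,\ldots,n\}$ respectively, and the identity is then just the standard restatement that $\|\mu-\nu\|_{\TV}$ equals twice the positive part of $\mu-\nu$ when $\mu,\nu$ are probability measures. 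Equivalently, one could appeal to~\eqref{eq:TV_identity} by taking $A = \{n+1,\ldots,n+\ell\}$ to match the lower bound in Lemma~\ref{lem:lower}, observing that under the monotonicity assumption this $A$ achieves the supremum.
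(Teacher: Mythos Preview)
Your proposal is correct and follows essentially the same approach as the paper's proof: both use~\eqref{eq:mu_diff} together with the monotonicity of $u_n$ to remove the absolute values, and then invoke the fact that $P^{n+\ell}\delta_0 - P^n\delta_0$ has total mass zero to identify the two sums. The paper phrases the latter as $(\mu_n - \mu_{n+\ell})(\{0,\ldots,n+\ell\}) = 0$, but this is the same observation.
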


We prove Theorem~\ref{thm:DFR} in Section~\ref{sec:monotone}.
The following lemma provides a simple way to verify the monotonicity condition of Theorem~\ref{thm:DFR}.

\begin{lemma}
    \label{lem:HF}
    Assume \emph{decreasing failure rate} (DFR), i.e.\ that the sequence
    $p_n / (p_n + p_{n+1} + \cdots)$ is non-increasing with $n$.
    Then $u_n$ is non-increasing in $n$.
\end{lemma}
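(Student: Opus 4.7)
The plan is to prove a stronger, monotonicity-in-the-initial-state statement and then deduce the lemma from it in one line. Let $v_n^{(k)} := \P(X_n = 0 \mid X_0 = k)$, so that $v_n^{(0)} = u_n$. The first step is to record the one-step recursion obtained by conditioning on $X_1$ and using time homogeneity of the chain together with the transition rule from state $k$:
\[
v_{n+1}^{(k)} = q_k u_n + (1-q_k)\, v_n^{(k+1)},\qquad n,k \geq 0.
\]
Note also that DFR translates to $q_k$ being non-increasing in $k$, since $p_n/(p_n + p_{n+1} + \cdots) = p_n/z_{n-1} = q_{n-1}$.

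The core of the argument is the auxiliary claim $v_n^{(k+1)} \leq v_n^{(k)}$ for all $n,k \geq 0$, which I would prove by induction on $n$. The base case $n=0$ is trivial since $v_0^{(0)}=1$ and $v_0^{(k)}=0$ for $k\geq 1$. For the inductive step, subtracting two instances of the recursion and rearranging gives
\[
v_{n+1}^{(k+1)} - v_{n+1}^{(k)} = -(q_k - q_{k+1})\bigl(u_n - v_n^{(k+1)}\bigr) + (1-q_{k+1})\bigl(v_n^{(k+2)} - v_n^{(k+1)}\bigr).
\]
Both terms on the right are non-positive: $q_k - q_{k+1} \geq 0$ by DFR; $u_n - v_n^{(k+1)} \geq 0$ by iterating the inductive hypothesis through $v_n^{(0)}\geq v_n^{(1)}\geq \cdots \geq v_n^{(k+1)}$; and $v_n^{(k+2)} - v_n^{(k+1)} \leq 0$ is the inductive hypothesis at index $k+1$. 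This closes the induction.

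The conclusion is immediate: applying the recursion with $k=0$ and the auxiliary claim,
\[
u_{n+1} = q_0 u_n + (1-q_0)\, v_n^{(1)} \leq q_0 u_n + (1-q_0)\, v_n^{(0)} = u_n.
\]
I do not foresee a genuine obstacle; the only subtlety is that the inductive step at level $n+1$ requires the auxiliary claim at level $n$ \emph{for all} $k$ simultaneously (to get both $v_n^{(k+1)} \leq u_n$ and $v_n^{(k+2)} \leq v_n^{(k+1)}$), which is precisely why one must strengthen the target from $u_{n+1}\leq u_n$ to the statewise monotonicity $v_n^{(k+1)} \leq v_n^{(k)}$.
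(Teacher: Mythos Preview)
Your proof is correct. Both arguments strengthen the target to a statewise inequality and proceed by induction on $n$, and in fact the two invariants coincide after a change of variables: with $b_k^n = \P(Y_n = k)$ for the paper's reversed chain $Y$, one checks (using $q_k z_k = p_{k+1}$ and $(1-q_k)z_k = z_{k+1}$) that $b_k^n = z_k\, v_n^{(k)}$, so the paper's ratio condition $b_{k+1}^n / b_k^n \leq z_{k+1}/z_k$ is exactly your $v_n^{(k+1)} \leq v_n^{(k)}$. The presentations differ, however. The paper passes through the time-reversed chain and isolates an abstract ratio-preservation step (Lemma~\ref{lem:bn}), whose proof uses a monotonicity-in-$b_0$ trick; you work directly with hitting probabilities of the forward chain and close the induction via the explicit two-term decomposition of $v_{n+1}^{(k+1)} - v_{n+1}^{(k)}$. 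Your route is more transparent probabilistically (``starting further from $0$ makes return by time $n$ less likely, provided the shortcut probability $q_k$ does not increase with $k$''), while the paper's formulation makes the role of the stationary weights $z_k$ explicit.
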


\begin{proof}
The lemma appears in~\cite{Hansen_Frenk_91_monotonicity} but we provide a different, simple proof in Section~\ref{sec:HF}.
\end{proof}

Finally, we show that the optimal rates are achievable for null recurrent chains under \emph{practical} assumptions:

\begin{thm}\label{thm:general}
    Suppose that
    \begin{equation}\label{eq:p_bound_imp}
        c \rho (n) \leq z_n \leq \rho(n)
    \end{equation}
    for all $n \geq 1$, where $c \in (0,1]$ and $\rho$ is regularly varying with index $-\alpha \in (-1,0)$.

    If $\alpha\in (\frac12,1)$, then uniformly in $n\geq \ell \geq 1$,
    \begin{equation}\label{eq:some_rate}
        \|P^{n + \ell} \delta_0 - P^n \delta_0\|_{\TV}
        \lesssim
        \frac{n \rho(n)^2}{\ell \rho(\ell)^{2} }\;.
    \end{equation}
    If $\alpha \in (0,1)$ and, in addition to~\eqref{eq:p_bound_imp}, we assume
    \begin{equation}\label{eq:p_reg}
        \sum_{k> n}|p_k - p_{k+1}|
        \leq c^{-1} n^{-1}\rho(n)
        \;,
    \end{equation}
    then, uniformly in $n\geq\ell\geq 1$,
    \begin{equation}\label{eq:mu_reg}
        \|P^{n + \ell} \delta_0 - P^n \delta_0\|_{\TV}
        \lesssim \frac{\rho(n)}{\rho(\ell)}
        \;.
    \end{equation}
    All the proportionality constants depend only on the constants in~\eqref{eq:aperiodic} and on $c,\rho$.
\end{thm}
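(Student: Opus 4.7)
The plan is to work from the explicit decomposition~\eqref{eq:mu_diff}, which gives $\|P^{n+\ell}\delta_0 - P^n\delta_0\|_{\TV} \leq I_1 + I_2$ where $I_1 = \sum_{m=n+1}^{n+\ell} u_{n+\ell-m} z_m$ is the mass on the new states $\{n+1,\ldots,n+\ell\}$ and $I_2 = \sum_{m=0}^n |u_{n+\ell-m} - u_{n-m}|z_m$ is the redistributed mass on $\{0,\ldots,n\}$. The first task is to establish the renewal upper bound $u_k\lesssim 1/((k+1)\rho(k+1))$ for $\alpha\in(0,1)$: a Tauberian analysis of the generating function $U(s) = 1/(1-F(s))$ under the tail hypothesis~\eqref{eq:p_bound_imp} should give the upper bound, complementing the Garsia--Lamperti liminf already used in the proof of Corollary~\ref{cor:lower}. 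Combined with $z_m\asymp\rho(n)$ on $m\in[n+1,n+\ell]$ (valid for $\ell\leq n$ by regular variation), a Karamata computation then yields $\sum_{k=0}^{\ell-1} u_k \asymp 1/\rho(\ell)$, and hence $I_1\lesssim \rho(n)/\rho(\ell)$, which is dominated by the right-hand sides of both~\eqref{eq:some_rate} and~\eqref{eq:mu_reg}.

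The bulk of the work is controlling $I_2$, which reduces to estimating the increments $w_k := u_{k+\ell} - u_k$. Subtracting the renewal equations at $k$ and $k+\ell$ yields the perturbed equation $w_k = \sum_{j=1}^k p_j w_{k-j} + g_k$ with driving term $g_k = \sum_{m=0}^{\ell-1} p_{k+\ell-m} u_m$, which can be resolved as $w = u\ast h$ for a suitable $h$. Tauberian analysis of the associated generating functions (inspired by but not requiring the precise regular variation assumptions of~\cite{GL62}) then converts the hypotheses on $p_n$ into pointwise or second-moment estimates on $w_k$; these are inserted into $I_2$ with the natural split at $k = n/2$: for $k\leq n/2$ one uses $z_{n-k}\asymp\rho(n)$, while for $k>n/2$ one uses $\sum_{j\leq n/2} z_j\asymp n\rho(n)$.

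The difference between~\eqref{eq:some_rate} and~\eqref{eq:mu_reg} reflects the sharpness of the increment bound obtainable from the respective hypotheses. Under~\eqref{eq:p_reg}, which constrains $\sum_{k>n}|p_k - p_{k+1}|$ on the scale $n^{-1}\rho(n)$, I expect to extract the sharp pointwise bound $|w_k|\lesssim \ell/(k^2\rho(k))$ for $k\geq\ell$, with the trivial $|w_k|\leq u_k + u_{k+\ell}\lesssim 1/((k+1)\rho(k+1))$ used for $k<\ell$; summing against $z_{n-k}$ with the two-regime split produces $I_2\lesssim \rho(n)/\rho(\ell)$ and hence~\eqref{eq:mu_reg}. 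Without~\eqref{eq:p_reg}, pointwise control is unavailable, and one must instead use second-moment estimates. The Fourier representation $w_k = -\tfrac{1}{2\pi}\int_{-\pi}^\pi \frac{1-e^{i\ell\theta}}{1-\phi(\theta)}e^{-ik\theta}\,d\theta$ (with $\phi(\theta) = \sum_{j\geq 1} p_j e^{ij\theta}$) combined with Plancherel, using $|1-\phi(\theta)|\asymp\rho(1/|\theta|)$ implied by~\eqref{eq:p_bound_imp}, should yield $\sum_k |w_k|^2\lesssim 1/(\ell\rho(\ell)^2)$. The constraint $\alpha>\tfrac12$ in~\eqref{eq:some_rate} ensures that $\sum_k z_k^2$ is finite, which is the threshold for an $L^2$-type argument to be admissible; a careful combination of these two second-moment bounds, executed in blocks aligned with the scales of $\ell$ and $n/2$ rather than globally, then produces the weaker rate $n\rho(n)^2/(\ell\rho(\ell)^2)$, establishing~\eqref{eq:some_rate}.

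The hardest step will be the Tauberian extraction of the sharp pointwise bound $|w_k|\lesssim\ell/(k^2\rho(k))$ used for~\eqref{eq:mu_reg}. Infinite-mean renewal theory is substantially more delicate than its positive-recurrent counterpart: the target $u_k\asymp 1/(k\rho(k))$ is itself heavy-tailed, and estimating its first differences requires pushing Karamata--Tauberian arguments to the increments rather than to the sequence itself. The two-sided nature of~\eqref{eq:p_bound_imp}, as opposed to a precise one-sided asymptotic, forces careful tracking of slowly varying factors throughout, and this is precisely the technical novelty that the introduction advertises as the principal contribution of~\ref{pt:gen}.
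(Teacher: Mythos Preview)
Your overall strategy---decompose via~\eqref{eq:mu_diff}, control $I_1$ with a bound on $u_k$, and reduce $I_2$ to estimates on the increments $w_k=u_{k+\ell}-u_k$---matches the paper's, and your $I_1$ computation is correct. The gap is in how you propose to control $w_k$.

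For~\eqref{eq:some_rate}, the Plancherel route does not deliver the rate. The global bound $\sum_k|w_k|^2\lesssim 1/(\ell\rho(\ell)^2)$ is correct, but it carries no information about where the mass of $w$ sits, hence no decay in $n$. Concretely, Cauchy--Schwarz on the range $k\le n/2$ gives $\sum_{k\le n/2}|w_k|z_{n-k}\le\bigl(\sum_k|w_k|^2\bigr)^{1/2}\bigl(\sum_{m\ge n/2}z_m^2\bigr)^{1/2}\asymp\bigl(n\rho(n)^2/(\ell\rho(\ell)^2)\bigr)^{1/2}$, the \emph{square root} of the target and hence strictly worse; no block refinement recovers the missing factor from an $L^2$ bound alone. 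The paper instead proves the \emph{pointwise} estimate $|u_r-u_{r-\ell}|\lesssim\rho(r)\,\ell^{-1}\rho(\ell)^{-2}$ for $r\ge 2\ell$ (Lemma~\ref{lem:u_diff}), obtained not by Plancherel but by bounding the $L^1$ modulus of continuity of $U=1/(1-P)$: from $|P|_{\CC^\rho}<\infty$ and $|1-P(x)|\gtrsim\rho(|x|^{-1})$ one gets the local H\"older bound $|U|_{\CC^\rho([x/2,3x/2])}\lesssim\rho(|x|^{-1})^{-2}$, and then $u_n$ is a Fourier coefficient so $|u_n-u_{n-\ell}|\le\tfrac12\omega_{\pi/(n-\ell)}\bigl((1-e_{-\ell})U\bigr)$. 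It is this pointwise decay in $r$ that makes the convolution against $\rho(n+\ell-r)$ produce $n\rho(n)^2$.

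For~\eqref{eq:mu_reg}, your claimed uniform bound $|w_k|\lesssim\ell/(k^2\rho(k))$ is what the paper establishes only for $\alpha>\tfrac12$; for $\alpha<\tfrac12$ it obtains instead $|w_k|\lesssim k^{-1}\rho(k)\rho(\ell)^{-2}$, and for $\alpha=\tfrac12$ a version with a slowly varying correction (Lemma~\ref{lem:u_diff_reg}). All three variants still sum to $\rho(n)/\rho(\ell)$, so the final result survives, but the single bound you state for all $\alpha\in(0,1)$ is not what is proved and may not hold. The mechanism that converts~\eqref{eq:p_reg} into these sharper increment bounds is again a modulus-of-continuity argument, now applied to $H'$ where $H=(1-e_{-\ell})U$: the hypothesis~\eqref{eq:p_reg} yields $|P'(x)|\lesssim|x|^{-1}\rho(|x|^{-1})$ together with a local $\CC^\rho$ bound on $P'$, and this extra derivative---unavailable under~\eqref{eq:p_bound_imp} alone---is precisely what buys the improvement from Lemma~\ref{lem:u_diff} to Lemma~\ref{lem:u_diff_reg}. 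Your phrase ``Tauberian analysis of the associated generating functions'' conceals exactly this step.
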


We prove Theorem~\ref{thm:general} in Section~\ref{sec:proof_general}.
Remark that~\eqref{eq:mu_reg} is generally sharp by Corollary~\ref{cor:lower}.

\begin{rmk}\label{rem:generalisations}
    The only place where we use the lower bound on $z_n$ is Lemma~\ref{lem:1-P}.
    It is possible to assume a bound of the form
    $C_1 n^{-\beta} \leq z_n \leq C_2 n^{-\alpha}$ with possibly different exponents $\beta \geq \alpha$,
    but then the bound on $\|P^{n+\ell} \delta_0 - P^n \delta_0\|_{\TV}$ becomes more complicated to state.

    In the same spirit, one can generalise the condition~\eqref{eq:p_reg}
    to $\sum_{k \geq n} |p_{k}+p_{k+1}|\lesssim \hat\rho(n)$ for another RV function $\hat \rho$,
    but again with a more complicated final result.
\end{rmk}

\begin{rmk}
Condition~\eqref{eq:p_reg} implies $p_n = \sum_{m\geq n} ( p_m - p_{m+1} ) \lesssim n^{-1}\rho(n)$, which,
by Karamata's theorem, implies $\sum_{m> n} p_m \lesssim \rho(n)$.
Hence~\eqref{eq:p_reg} implies the upper bound in~\eqref{eq:p_bound_imp} (up to a multiplicative constant),
and so~\eqref{eq:p_reg} is the most `optimistic' bound that is consistent with the upper bound in~\eqref{eq:p_bound_imp},
i.e.\ if $n^{-1}\rho(n)$ is replaced by $\bar\rho(n)$ for RV $\bar\rho$ with $\bar\rho(n) \lesssim n^{-1}\rho(n)$ then
we would have $\sum_{m\geq n} p_m \lesssim n\bar\rho(n)$,
which is stronger than the upper bound from~\eqref{eq:p_bound_imp}.
\end{rmk}

\section{General Markov chains}
\label{sec:general}

In this section we estimate memory loss on Harris chains,
using the bounds of $\|P^n(\delta_0 - P^\ell \delta_0)\|_{\TV}$ for the model chain~\eqref{eq:RMC}.
This is our principle motivation for studying the chain~\eqref{eq:RMC} and for the results of Section~\ref{sec:model}.
First we treat Markov chains with a recurrent atom in Section~\ref{sec:atom}
and then general Harris chains in Section~\ref{sec:Harris}.
We discuss examples in Section~\ref{sec:har-examples}.

Given $p_n \geq 0$ with $\sum_{n \geq 1} p_n = 1$ we denote
\begin{equation}
    \label{eq:bn}
    b_\ell(n)
    = \| P^n (\delta_0 - P^\ell \delta_0) \|_{\TV}
    \;,
\end{equation}
where $P$ is the transition operator on the Markov chain~\eqref{eq:RMC} defined by $p_n$.

Throughout this section, let $(\Omega,\Sigma)$ be a measurable space
and $X_n$, $n\geq 0$, a Markov chain with state space $\Omega$
and transition operator $Q\colon \cP(\Omega)\to\cP(\Omega)$.

\subsection{Markov chains with a recurrent atom}
\label{sec:atom}

Suppose that $X_n$ has a recurrent atom $S \in \Sigma$.
That is,
there exists a probability measure $\beta$ on $\Omega$ such that
\begin{enumerate}[label=(\alph*)]
    \item $\P(X_n \in S \text{ infinitely often} \mid X_0 = x) = 1$ for each $x \in \Omega$, and
    \item $\P(X_{n+1} \in A \mid X_n = x) = \beta(A)$ for all $A \in \Sigma$ and $x \in S$.
\end{enumerate}
Define
\begin{equation}
    \label{eq:gen-pn}
    \tau = \inf \{k \geq 1 \,:\, X_{k-1} \in S \}
    \qquad \text{and} \qquad
    p_n = \P(\tau = n \mid X_0 \sim \beta)
    \;.
\end{equation}
Since $X_n$ is recurrent, $\sum_{n\geq 1} p_n = 1$.

Let $\beta_m$, $m \geq 0$, be probability measures on $\Omega$ given by
\begin{align*}
    \beta_m (A)
    & = \P(X_{m+1} \in A \mid X_0 \in S \text{ while } X_1, \ldots, X_m \notin S)
    \\
    & = \P(X_m \in A \mid X_0 \sim \beta \text{ and } \tau \geq m+1)
    \;.
\end{align*}
For a measure $\zeta$ on $\{0,1, \ldots\}$, let $\pi \zeta$ be its projection on $\Omega$ obtained by
\[
    \pi \zeta
    = \sum_{m \geq 0} \zeta(m) \beta_m
    \;.
\]

The following lemma links $Q^n \mu$, for $\mu \in \cP(\Omega)$, with
the evolution of $P^n \delta_0$ on~\eqref{eq:RMC} for which
the $p_n$ are given by~\eqref{eq:gen-pn}, up to a small remainder.

\begin{lemma}
    \label{lem:gen1}
    Let $X_0 \sim \mu \in \cP(\Omega)$ and $A \in \Sigma$. Then
    \[
        \P(X_n \in A)
        = Q^n \mu (A)
        = \pi \Bigl[ \sum_{t=1}^n \P(\tau = t) P^{n-t} \delta_0 \Bigr] (A)
        +
        \P( X_n \in A \,, \tau > n)
        \;.
    \]
\end{lemma}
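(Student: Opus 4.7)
The plan is to condition on $\tau$ and use the atom property to reduce the lemma to the identity $Q^k \beta = \pi[P^k\delta_0]$ for every $k \geq 0$.

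Decomposing on $\tau$,
\[
\P(X_n \in A) = \sum_{t=1}^n \P(X_n \in A, \tau = t) + \P(X_n \in A, \tau > n).
\]
On $\{\tau = t\}$ with $t \leq n$ we have $X_{t-1} \in S$, so the atom property together with the Markov property gives $\P(X_n \in A \mid \tau = t) = Q^{n-t} \beta(A)$. Since $\pi$ is linear in its argument, the lemma reduces to showing
\[
Q^k \beta(A) = \pi\bigl[P^k \delta_0\bigr](A) \qquad \text{for all } k \geq 0 \text{ and } A \in \Sigma.
\]

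To establish this identity, I would decompose $\P(X_k \in A \mid X_0 \sim \beta)$ according to the most recent renewal time
\[
R_k = \max\bigl(\{0\} \cup \{1 \leq j \leq k : X_{j-1} \in S\}\bigr),
\]
with $R_k = 0$ corresponding to $\tau > k$. For $j \geq 1$, the atom property at time $j-1$ makes the shifted chain $(X_j, X_{j+1}, \ldots)$ distributed as the original chain started from $\beta$, which yields
\[
\P(R_k = j, X_k \in A \mid X_0 \sim \beta) = \P(X_{j-1} \in S \mid X_0 \sim \beta) \cdot z_{k-j}\, \beta_{k-j}(A),
\]
using the definitions of $\beta_{k-j}$ and $z_{k-j} = \P(\tau \geq k-j+1 \mid X_0 \sim \beta)$. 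For $j = 0$ one obtains $z_k \beta_k(A)$, which matches the same formula under the convention $u_0 = 1$. A standard renewal decomposition on the first return time $\tau$ shows that $\P(X_{j-1} \in S \mid X_0 \sim \beta)$ satisfies the recursion~\eqref{eq:pu} with initial value $1$, and hence equals $u_j$. Summing over $j$ and reindexing with $m = k - j$,
\[
Q^k \beta(A) = \sum_{m=0}^k u_{k-m} z_m \beta_m(A) = \pi\bigl[P^k \delta_0\bigr](A),
\]
where the last equality uses $P^k\delta_0(\{m\}) = u_{k-m} z_m$ for $0 \leq m \leq k$ and the definition of $\pi$.

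The main obstacle is the off-by-one bookkeeping: the convention that ``atom at time $j-1$'' corresponds to ``fresh start at time $j$'' requires care to isolate the correct factors $u_j$ (not $u_{j-1}$) and $\beta_{k-j}$ (not $\beta_{k-j+1}$). Once this is tracked correctly, the identity is essentially a rewriting of the standard renewal decomposition of $P^k \delta_0$.
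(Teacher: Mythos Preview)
Your proof is correct and rests on the same renewal idea as the paper's: use the atom to restart from $\beta$ at time $\tau$, and identify the post-$\tau$ evolution with the model chain. The organization differs slightly. The paper introduces an explicit backward-recurrence-time process $Z_n$ (time since the last index $j$ with $X_{j-1}\in S$, or $-1$ before $\tau$), observes that $(X_n,Z_n)$ is Markov with $\beta_m(A)=\P(X_n\in A\mid Z_n=m)$, and that $(Z_{\tau+k})_{k\ge 0}$ is exactly the model chain started at $0$; summing $\P(X_n\in A,\,Z_n=m,\,\tau=t)$ over $m,t$ gives the formula in one pass. You instead split the argument in two: first condition on $\tau$ to reduce to the identity $Q^k\beta=\pi[P^k\delta_0]$, then prove that identity by a last-renewal decomposition (your $R_k$ is essentially the paper's $Z_k$ in different coordinates) together with the verification that $\P(X_{j-1}\in S\mid X_0\sim\beta)=u_j$. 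Both routes are equivalent; the paper's $Z_n$ makes the bijection with the model chain more transparent, while your presentation isolates the reusable identity $Q^k\beta=\pi[P^k\delta_0]$ cleanly. Your caution about the off-by-one bookkeeping is well placed, and you have handled it correctly.
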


\begin{proof}
    Let
    \[
        Z_n = \begin{cases}
            -1 & \text{if } n = 0 \text{ or } X_0, \ldots X_{n-1} \notin S \;, \\
            \min \{ k \geq 0 \,:\, X_{n-1-k} \in S \} & \text{otherwise} \;.
        \end{cases}
    \]
    Then $(X_n, Z_n)$ is a Markov chain on $\Omega \times \{-1,0,\ldots\}$, and, for $m\geq 0$,
    \[
        \beta_m (A) = \P(X_n \in A \mid Z_n = m)
        \;, \qquad
        \tau = \inf \{k \geq 1 \,:\, Z_k = 0\}
        \;.
    \]
    Observe that $Z_\tau,Z_{\tau+1},\ldots$ behaves as the Markov chain~\eqref{eq:RMC} defined by $p_n$ in~\eqref{eq:gen-pn},
    with the initial condition $Z_\tau = 0$. Consequently,
    \[
        \sum_{m=0}^n \P(Z_{t + n} = m \mid Z_t = 0) \beta_m
        = \pi P^n \delta_0
        \;.
    \]
    Suppose that $X_0 \sim \mu$.
    For $A \in \Sigma$, $n \geq t\geq 1$, $0 \leq m < n$, we have
    \begin{align*}
        \P( X_n \in A ,\, Z_n = m ,\,\tau = t)
        & = \P(X_n \in A \mid Z_n = m ,\, \tau = t) \P(Z_n = m \mid \tau = t) \P(\tau = t)
        \\
        & = \beta_m(A) \P(Z_n = m \mid Z_t = 0) \P(\tau = t)
        \;.
    \end{align*}
    Therefore,
    \begin{align*}
        \P(X_n \in A)
        & = \sum_{t=1}^n \sum_{m=0}^{n-t} \P( X_n \in A ,\, Z_n = m ,\,\tau = t)
        + \P(X_n \in A ,\, Z_n = -1)
        \\
        & = \sum_{t=1}^n \sum_{m=0}^{n-t} \beta_m(A) \P(Z_n = m \mid Z_t = 0) \P(\tau = t)
        + \P(\tau > n, X_n\in A)
        \\
        & = \sum_{t=1}^n \P(\tau = t) \pi [P^{n-t} \delta_0] (A)
        + \P(\tau > n, X_n\in A)
        \;.
        \qedhere
    \end{align*}
\end{proof}

Lemma~\ref{lem:gen1} can be used to estimate memory loss, for example:

\begin{cor}
    \label{cor:gen}
    Let $\mu, \mu' \in \cP(\Omega)$. Then
    \[
        \| Q^n (\mu - \mu') \|_{\TV}
        \leq \sum_{t=1}^n \bigl| \mu(\tau = t) - \mu'(\tau = t) \bigr| \, b_{t-1} (n-t)
        + 2 \mu(\tau > n) + 2 \mu'(\tau > n)
        \;,
    \]
    where $b_\ell(n)$ are as in~\eqref{eq:bn}
    and $\mu(\tau = k)$ is a shorthand for $\P(\tau = k \mid X_0 \sim \mu)$.
\end{cor}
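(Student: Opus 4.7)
The plan is to apply Lemma~\ref{lem:gen1} with initial distributions $\mu$ and $\mu'$ in turn, and subtract:
\[
Q^n(\mu - \mu')
= \pi \Bigl[ \sum_{t=1}^n \bigl( \mu(\tau = t) - \mu'(\tau = t) \bigr) P^{n-t} \delta_0 \Bigr]
+ R_n,
\]
where $R_n(A) = \P_\mu(X_n \in A,\, \tau > n) - \P_{\mu'}(X_n \in A,\, \tau > n)$. The remainder is the difference of two non-negative measures of total masses $\mu(\tau > n)$ and $\mu'(\tau > n)$, so $\|R_n\|_{\TV} \leq \mu(\tau > n) + \mu'(\tau > n)$. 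The task is thus reduced to bounding the main term by $\sum_{t=1}^n |\mu(\tau=t) - \mu'(\tau=t)|\, b_{t-1}(n-t)$, up to a tail of the same type.

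The key algebraic step is to recenter the inner sum against $P^{n-1}\delta_0$:
\[
\sum_{t=1}^n (\mu(\tau=t) - \mu'(\tau=t)) P^{n-t}\delta_0
= \sum_{t=1}^n (\mu(\tau=t) - \mu'(\tau=t)) (P^{n-t}\delta_0 - P^{n-1}\delta_0) + c_n P^{n-1}\delta_0,
\]
where $c_n = \sum_{t=1}^n (\mu(\tau=t) - \mu'(\tau=t))$. Because $\tau$ is almost surely finite by recurrence, $c_n = \mu'(\tau > n) - \mu(\tau > n)$, hence $|c_n| \leq \mu(\tau > n) + \mu'(\tau > n)$. For each $t$, the factor $P^{n-t}\delta_0 - P^{n-1}\delta_0 = P^{n-t}(\delta_0 - P^{t-1}\delta_0)$ has total variation exactly $b_{t-1}(n-t)$ by the definition~\eqref{eq:bn} (and vanishes at $t=1$).

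Using that $\pi$ is a total variation contraction, $\|\pi \zeta\|_{\TV} \leq \sum_m |\zeta(m)|\, \|\beta_m\|_{\TV} = \|\zeta\|_{\TV}$, the triangle inequality applied to the above decomposition gives
\[
\Bigl\| \pi \Bigl[ \sum_{t=1}^n (\mu(\tau=t) - \mu'(\tau=t)) P^{n-t}\delta_0 \Bigr] \Bigr\|_{\TV}
\leq \sum_{t=1}^n |\mu(\tau=t) - \mu'(\tau=t)|\, b_{t-1}(n-t) + \mu(\tau > n) + \mu'(\tau > n),
\]
and adding the bound on $\|R_n\|_{\TV}$ yields the stated estimate with the $2\mu(\tau>n) + 2\mu'(\tau>n)$ tail. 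I do not anticipate any real obstacle: the one non-obvious observation is the recentering against $P^{n-1}\delta_0$, which replaces the uninformative factor $\|P^{n-t}\delta_0\|_{\TV} = 1$ by the memory-loss quantity $b_{t-1}(n-t)$, at the cost of a single term proportional to $P^{n-1}\delta_0$ that is absorbed harmlessly into the $\tau > n$ remainder.
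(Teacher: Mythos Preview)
Your proof is correct and follows essentially the same approach as the paper: apply Lemma~\ref{lem:gen1} to both $\mu$ and $\mu'$, subtract, recenter the main sum against $P^{n-1}\delta_0$, and control the resulting constant $c_n$ via the tail probabilities $\mu(\tau>n),\mu'(\tau>n)$. Your presentation is slightly more explicit (noting that $\pi$ is a TV contraction and writing $P^{n-t}\delta_0 - P^{n-1}\delta_0 = P^{n-t}(\delta_0 - P^{t-1}\delta_0)$), but the argument is the same.
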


\begin{proof}
    Let $X_0 \in \mu$ and let $X'_n$ be a copy of $X_n$ with $X'_0 \sim \mu'$.
    Let $\tau'$ denote the copy of $\tau$.
    By Lemma~\ref{lem:gen1},
    \begin{equation*}
    \begin{split}
        Q^n \mu(A) - Q^n \mu'(A)
        & = \pi \Bigl[ \sum_{t=1}^n \bigl( \P(\tau = t) - \P(\tau' = t) \bigr) P^{n-t} \delta_0 \Bigr] (A)
        \\
        & \qquad + \P( X_n \in A \,, \tau > n) - \P( X'_n \in A \,, \tau' > n)
        \;.
    \end{split}
    \end{equation*}
    Furthermore $\sum_{t=1}^\infty \bigl(\P(\tau = t) - \P(\tau' = t)\bigr) P^{n-1} \delta_0 = 0$ and thus
    \begin{equation*}
        \begin{aligned}
            \sum_{t=1}^n \bigl( \P(\tau = t) - \P(\tau' = t) \bigr) P^{n-t} \delta_0
            & = \sum_{t=1}^n \bigl( \P(\tau = t) - \P(\tau' = t) \bigr) (P^{n-t} \delta_0 - P^{n-1} \delta_0)
            \\
            & \quad
            - (\P(\tau > n) - \P(\tau'>n)) P^{n-1} \delta_0
            \;.
        \end{aligned}
    \end{equation*}
    The result follows readily.
\end{proof}

Corollary~\ref{cor:gen} can be strengthened when $\mu$ and $\mu'$ overlap, i.e.\ when $|\mu - \mu'|(\Omega) < 2$.
For $\mu, \mu' \in \cP(\Omega)$ let
\begin{equation}
    \label{eq:mumu}
    |\mu - \mu'|(\tau = n)
    = |\mu - \mu'|(\Omega) \, \P(\tau = n \mid X_0 \sim \lambda)
    \;,
\end{equation}
where $\lambda$ is the normalized to probability difference $|\mu - \mu'|$, namely
$\lambda = \frac{|\mu - \mu'|}{|\mu - \mu'|(\Omega)}$, if $\mu \neq \mu'$, and an arbitrary measure
otherwise.

\begin{cor}
    \label{cor:genn}
    Let $\mu, \mu' \in \cP(\Omega)$. Then
    \[
        \| Q^n (\mu - \mu') \|_{\TV}
        \leq \sum_{t=1}^n |(\mu - \mu') (\tau = t) | \, b_{t-1} (n-t)
        + 2 |\mu - \mu'|(\tau > n)
        \;,
    \]
    where $b_\ell(n)$ are as in~\eqref{eq:bn}
    and $|\mu - \mu'|(\tau = k)$ is as in~\eqref{eq:mumu}.
\end{cor}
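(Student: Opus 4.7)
The plan is to reduce Corollary~\ref{cor:genn} directly to Corollary~\ref{cor:gen} by Jordan-decomposing the signed measure $\mu - \mu'$. Assume $\mu \neq \mu'$ (otherwise both sides vanish) and set $c = \tfrac12 |\mu - \mu'|(\Omega) \in (0,1]$. Writing $(\mu - \mu')^{\pm}$ for the positive and negative parts of the Jordan decomposition, these are mutually singular positive measures with $(\mu - \mu')^{\pm}(\Omega) = c$ and $|\mu - \mu'| = (\mu - \mu')^+ + (\mu - \mu')^-$. Setting $\nu_\pm = c^{-1}(\mu - \mu')^{\pm} \in \cP(\Omega)$, one has $\mu - \mu' = c(\nu_+ - \nu_-)$, and by linearity of $Q^n$,
\[
    \|Q^n(\mu - \mu')\|_{\TV} = c\,\|Q^n(\nu_+ - \nu_-)\|_{\TV}\;.
\]

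Next I would apply Corollary~\ref{cor:gen} to the probability measures $\nu_+$ and $\nu_-$ to obtain
\[
    \|Q^n(\nu_+ - \nu_-)\|_{\TV} \leq \sum_{t=1}^n |\nu_+(\tau=t) - \nu_-(\tau=t)|\, b_{t-1}(n-t) + 2\nu_+(\tau > n) + 2\nu_-(\tau > n)\;,
\]
and multiply through by $c$. To identify the terms with those in the desired statement, note that for the bounded function $f_t(x) = \P(\tau = t \mid X_0 = x)$, signed-measure integration gives
\[
    \mu(\tau = t) - \mu'(\tau = t) = \int f_t\, d(\mu - \mu') = (\mu - \mu')^+(\tau = t) - (\mu - \mu')^-(\tau = t)\;,
\]
so $c|\nu_+(\tau = t) - \nu_-(\tau = t)| = |(\mu - \mu')(\tau = t)|$. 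Similarly, with $g_n(x) = \P(\tau > n \mid X_0 = x)$, the definition~\eqref{eq:mumu} unfolds to $|\mu - \mu'|(\tau > n) = \int g_n\, d|\mu - \mu'|$, which by the Jordan decomposition equals $(\mu - \mu')^+(\tau > n) + (\mu - \mu')^-(\tau > n) = c\nu_+(\tau > n) + c\nu_-(\tau > n)$. Assembling these identities gives exactly the bound in the statement.

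I do not foresee any real obstacle: the entire argument is bookkeeping built on the linearity of $Q^n$ on signed measures together with the standard identities $\int f\, d(\mu - \mu') = \int f\, d(\mu - \mu')^+ - \int f\, d(\mu - \mu')^-$ and $\int f\, d|\mu - \mu'| = \int f\, d(\mu - \mu')^+ + \int f\, d(\mu - \mu')^-$. The only point worth flagging is interpretative: the expression $(\mu - \mu')(\tau = t)$ in the statement must be read as the signed-measure action $\mu(\tau = t) - \mu'(\tau = t)$ on the bounded function $f_t$, as opposed to integration against the positive total variation measure $|\mu - \mu'|$, which would yield a strictly larger quantity and would only recover Corollary~\ref{cor:gen}.
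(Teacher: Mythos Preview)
Your proof is correct and essentially identical to the paper's: both Jordan-decompose $\mu-\mu'$ into mutually singular probability measures (your $\nu_\pm$ are the paper's $\kappa,\kappa'$), apply Corollary~\ref{cor:gen} to this pair, and rescale by $c=\tfrac12|\mu-\mu'|(\Omega)$. One small note on your closing remark: the distinction between Corollary~\ref{cor:gen} and Corollary~\ref{cor:genn} lies in the \emph{tail} term (the former has $2\mu(\tau>n)+2\mu'(\tau>n)$, the latter the smaller $2|\mu-\mu'|(\tau>n)$), not in the sum, which already carries the absolute-value interpretation in both statements.
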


\begin{proof}
    Write $\mu - \mu' = \frac12|\mu - \mu'|(\Omega) \, (\kappa - \kappa')$, where $\kappa$ and $\kappa'$ are mutually
    singular probability measures.
    Then $\lambda = \frac{1}{2}|\kappa-\kappa'| = \frac{1}{2}(\kappa+\kappa')$ and $\P(\tau=n\mid X_0\sim\lambda)= \frac{1}{2}(\kappa(\tau=n)+\kappa'(\tau=n))$.
    The result follows from Corollary~\ref{cor:gen} applied to $\kappa, \kappa'$.
\end{proof}

\subsection{Harris chains}
\label{sec:Harris}

Suppose now that $X_n$ is a \emph{Harris chain}~\cite{Lindvall_02_coupling, Durrett_19}, i.e.\
there exists $S \in \Sigma$, $\eps > 0$ and a probability measure $\beta$ on $\Omega$ such that
\begin{enumerate}[label=(\alph*)]
    \item $\P(X_n \in S \text{ infinitely often} \mid X_0 = x) = 1$ for each $x \in \Omega$, and
    \item $\P(X_{n+1} \in A \mid X_n = x) \geq \eps \beta(A)$ for all $A \in \Sigma$ and $x \in S$.
\end{enumerate}

\begin{rmk}
    If $\eps = 1$, then $S$ is an atom and we can work with $X_n$ as in Section~\ref{sec:atom}.
    For $\eps < 1$ the situation is not too different: using Nummelin's splitting technique~\cite{Nummelin_78_splitting}
    we can extend $X_n$ to a Markov chain with an atom.
\end{rmk}

\begin{rmk}
    There are different definitions of Harris (recurrent) chains in the literature, see e.g.~\cite{Nummelin_84_MC,Meyn_Tweedie_09_MC,Benaim_Hurth_2022_MC_Metric}.
    We work with a definition that is simple for our purposes.
    See e.g.~\cite{Nummelin_Tuominen_82_geom_erg}
    that relates our definition to others.
\end{rmk}

Let $\xi_n$, $n \geq 0$, be a sequence of independent (also from the Markov chain $X_n$) Bernoulli random variables
with $\P(\xi_n = 1) = \eps$ and $\P(\xi_n=0)=1-\eps$.
Denote
\begin{equation}\label{eq:tau_def_xi}
    \tau = \inf \{k \geq 1 \,:\, X_{k-1} \in S \text{ and } \xi_{k-1} = 1 \}
    \qquad \text{and} \qquad
    p_n = \P( \tau = n \mid X_0 \sim \beta)
    \;.
\end{equation}
This way, $p_n$ and $\tau$ are geometrically compounded versions of~\eqref{eq:gen-pn}.
Again, $\sum_{n\geq 1} p_n = 1$.

Let $\beta_m$, $m \geq 0$, be probability measures on $\Omega$ given by
\[
    \beta_m (A)
    = \P(X_m \in A \mid X_0 \sim \beta \text{ and } \tau \geq m+1)
    \;.
\]
For a measure $\zeta$ on $\{0,1, \ldots\}$, let $\pi \zeta$ be the measure on $\Omega$ obtained by
\[
    \pi \zeta
    = \sum_{m \geq 0} \zeta(m) \beta_m
    \;.
\]
The exact copy of Lemma~\ref{lem:gen1} holds:

\begin{lemma}
    \label{lem:har}
    Let $X_0 \sim \mu \in \cP(\Omega)$ and $A \in \Sigma$. Then
    \[
        \P(X_n \in A)
        = Q^n \mu (A)
        = \pi \Bigl[ \sum_{t=1}^n \P(\tau = t) P^{n-t} \delta_0 \Bigr] (A)
        +
        \P( X_n \in A ,\, \tau > n)
        \;.
    \]
\end{lemma}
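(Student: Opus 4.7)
The plan is to reduce to Lemma~\ref{lem:gen1} by realizing the $\xi_n$ as Nummelin's splitting variables rather than as a truly auxiliary independent sequence. Writing $P(x,A) = \eps\beta(A) + (1-\eps) R(x,A)$ for $x \in S$, where $R(x, A) = (P(x,A) - \eps\beta(A))/(1-\eps)$ is a probability kernel by the minorization assumption, I would construct a coupling of $(X_n, \xi_n)$ in which the $\xi_n$ retain their marginal $\mathrm{Ber}(\eps)$ law but are used to \emph{select} the branch: when $X_n \in S$, draw $\xi_n$ and then sample $X_{n+1}$ from $\beta$ if $\xi_n = 1$ and from $R(X_n,\cdot)$ if $\xi_n = 0$; when $X_n \notin S$, sample $X_{n+1}$ from $P(X_n,\cdot)$ with $\xi_n$ independent. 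The marginal law of $(X_n)$ then coincides with the given Harris chain.

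The payoff is that on the event $\{X_n \in S,\, \xi_n = 1\}$ the next state $X_{n+1}$ has distribution $\beta$ independently of $(X_0,\ldots,X_n)$ and $(\xi_0,\ldots,\xi_{n-1})$, so the time $\tau$ in~\eqref{eq:tau_def_xi} becomes a genuine regeneration time: $X_\tau \sim \beta$, the post-$\tau$ joint trajectory is independent of the history up to $\tau$, and the $p_n$ in~\eqref{eq:tau_def_xi} are the excursion lengths between successive regenerations (in particular $\sum_n p_n = 1$ by Harris recurrence). This places us in exactly the atomic setting of Section~\ref{sec:atom}, with $S \times \{1\}$ playing the role of the recurrent atom for the joint chain.

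From here the proof of Lemma~\ref{lem:gen1} transcribes verbatim. Define $Z_n = -1$ on $\{\tau > n\}$ and $Z_n = \min\{k \geq 0 : X_{n-1-k} \in S,\; \xi_{n-1-k} = 1\}$ otherwise, so $Z_n$ records the age since the last regeneration. The regeneration property gives (i) $(Z_\tau, Z_{\tau+1}, \ldots)$ is a copy of the Markov chain~\eqref{eq:RMC} built from the $p_n$ of~\eqref{eq:tau_def_xi} started at $0$, so $\P(Z_n = m \mid Z_t = 0) = (P^{n-t}\delta_0)(m)$; and (ii) on $\{Z_n = m,\, \tau = t\}$ the conditional law of $X_n$ depends only on $m$ and equals $\beta_m$, because the segment $X_{n-m},\ldots,X_n$ is a fresh excursion from $\beta$ conditioned on no regeneration in the first $m+1$ steps, which is precisely the event $\tau \geq m+1$ appearing in the definition of $\beta_m$. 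Decomposing $\P(X_n \in A)$ over $\{\tau = t\}_{t \leq n}$ and $\{\tau > n\}$ and summing then yields the identity. The only genuinely new step relative to Lemma~\ref{lem:gen1} is the splitting construction of the first paragraph; the combinatorial accounting is identical.
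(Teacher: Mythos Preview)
Your proposal is correct and takes essentially the same approach as the paper: reduce to the atomic case via Nummelin's splitting and invoke Lemma~\ref{lem:gen1}. The paper presents the split chain as an explicit kernel $\hat H$ on $\Omega\times\{0,1\}$ with atom $\hat S = S\times\{1\}$, whereas you describe the same object as a coupling of $(X_n,\xi_n)$ in which $\xi_n$ selects the transition branch; the two descriptions are equivalent and the subsequent accounting is identical.
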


\begin{proof}
    The strategy, following~\cite{Nummelin_78_splitting}, is to extend $X_n$ to a Markov chain with an atom and then apply Lemma~\ref{lem:gen1}.
    To this end, let $H(x, A) = \P(X_{n+1} \in A \mid X_n = x)$ denote the kernel of $X_n$.
    Define $H_0(x,A)$ by
    \[
        H(x, A) = \begin{cases}
            \eps \beta(A) + (1 - \eps) H_0(x, A) & \text{if } x \in S \; , \\
            H_0 (x, A) & \text{otherwise} \;,
        \end{cases}
    \]
    where, if $\eps = 1$, we set $H_0(x,\cdot)$ arbitrarily for $x\in S$.

    Let $\hX_n$ be the Markov chain on $\hOmega = \Omega \times \{0,1\}$ with kernel
    \begin{equation*}
        \hH ((x,0), (A, t))
        = \begin{cases}
            (1 - \eps) H_0(x, A \cap S) + H_0(x, A \setminus S) & \text{if } t = 0 \;, \\
            \eps H_0(x, A \cap S) & \text{if } t = 1 \;,
        \end{cases}
    \end{equation*}
    \begin{equation}
    \hH ((x,1), (A, t))
        =
        \hbeta(A,t)
        := \begin{cases}
            (1-\eps) \beta (A \cap S) + \beta(A \setminus S) & \text{if } t = 0 \;, \\
            \eps \beta(A \cap S) & \text{if } t = 1 \;.
        \end{cases}
        \label{eq:hbeta_def}
    \end{equation}
    Recall that $X_0 \sim \mu$ and let $ \hX_0 \sim \hmu$ where
    \[
        \hmu(A \times \{0\}) = \mu(A \setminus S) + (1 - \eps) \mu(A \cap S)
        \quad \text{and} \quad
        \hmu(A \times \{1\}) = \eps \mu(A \cap S)
        \;.
    \]
    Observe that $\P(\hX_n = (x, 1) \mid \hX_n \in \{x\} \times \{0,1\}) = \eps \bone_{x \in S}$.
    It is then a direct verification that
    \[
        \P \bigl( \hX_{n+1} \in A \times \{0,1\} \mid \hX_n \in \{x\} \times \{0,1\} \bigr)
        = \P(X_{n+1} \in A \mid X_n = x)
        \;.
    \]
    That is, the first component marginal process of $\hX_n$ has the same law as $X_n$.
    Moreover $\hS = S \times \{1\}$ is an atom for $\hX_n$, which thus falls within the framework of Section~\ref{sec:atom}
    with $\htau = \inf \{k \geq 1 \,:\, \hX_{k-1} \in \hS \}$, $\hbeta\in\cP(\hOmega)$ as in~\eqref{eq:hbeta_def},
    and the corresponding projection $\hpi$.
    Moreover, $\P(\tau = n \mid X_0 \sim \beta) = \P(\htau = n \mid \hX_0 \sim \hbeta)$,
    so the distributions $p_n$ for $X_n$ and $\hX_n$ coincide.
    Thus, by Lemma~\ref{lem:gen1}, writing $\hA = A \times \{0,1\}$,
    \[
        \P(\hX_n \in \hA)
        = \hpi \Bigl[ \sum_{t=1}^n \P(\htau = t) P^{n-t} \delta_0 \Bigr] (\hA)
        +
        \P( \hX_n \in \hA ,\, \htau > n)
        \;.
    \]
    The hats can be removed, yielding the desired result.
\end{proof}

With the same proof as Corollary~\ref{cor:genn}, we obtain the following.

\begin{cor}
    \label{cor:har}
    Let $\mu, \mu' \in \cP(\Omega)$. Then
    \[
        \| Q^n (\mu - \mu') \|_{\TV}
        \leq \sum_{t=1}^n |(\mu - \mu') (\tau = t)| \, b_{t-1} (n-t)
        + 2 |\mu - \mu'|(\tau > n)
        \;,
    \]
    where $b_\ell(n)$ are as in~\eqref{eq:bn}
    and $|\mu - \mu'|(\tau = k)$ is as in~\eqref{eq:mumu}.
\end{cor}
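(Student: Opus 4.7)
The plan is to replicate the proof of Corollary~\ref{cor:genn}, which itself reduces to Corollary~\ref{cor:gen}. The crucial observation is that Lemma~\ref{lem:har} gives an expansion for $Q^n\mu(A)$ of exactly the same form as Lemma~\ref{lem:gen1}, namely
\begin{equation*}
Q^n\mu(A) = \pi\Bigl[\sum_{t=1}^n \P(\tau=t)P^{n-t}\delta_0\Bigr](A) + \P(X_n\in A,\,\tau>n),
\end{equation*}
with the only substantive difference being that $\tau$ and $p_n$ are now defined through the Bernoulli thinning~\eqref{eq:tau_def_xi} rather than through the atom return~\eqref{eq:gen-pn}. Every algebraic step in the proofs of Corollaries~\ref{cor:gen} and~\ref{cor:genn} uses only this structural form and not any property specific to the atom case, so the arguments transcribe verbatim.

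Concretely, I first establish the Harris analogue of Corollary~\ref{cor:gen}: pick coupled realisations with $X_0\sim\mu$, $X'_0\sim\mu'$, subtract the two instances of Lemma~\ref{lem:har}, and bound the result in total variation via~\eqref{eq:TV_identity}. The remainders $\P(X_n\in A,\,\tau>n)$ and $\P(X'_n\in A,\,\tau'>n)$ contribute at most $2\mu(\tau>n)$ and $2\mu'(\tau>n)$ respectively. For the main sum, the telescoping identity
\begin{equation*}
\sum_{t=1}^n\bigl(\P(\tau=t)-\P(\tau'=t)\bigr)P^{n-t}\delta_0 = \sum_{t=1}^n\bigl(\P(\tau=t)-\P(\tau'=t)\bigr)\bigl(P^{n-t}\delta_0-P^{n-1}\delta_0\bigr) - \bigl(\P(\tau>n)-\P(\tau'>n)\bigr)P^{n-1}\delta_0
\end{equation*}
isolates increments of the form $P^{n-t}\delta_0-P^{n-1}\delta_0$, whose total variation is exactly $b_{t-1}(n-t)$ by definition~\eqref{eq:bn}. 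Since the projection $\pi$ does not increase total variation on signed measures, the Harris analogue of Corollary~\ref{cor:gen} follows, with the boundary term from the telescoping absorbed into the $2\mu(\tau>n)+2\mu'(\tau>n)$ contribution.

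For the upgrade to the stated corollary, I decompose $\mu-\mu' = \tfrac12|\mu-\mu'|(\Omega)(\kappa-\kappa')$ with mutually singular probability measures $\kappa,\kappa'$, so that $\lambda = \tfrac12(\kappa+\kappa')$ and $\P(\tau=n\mid X_0\sim\lambda) = \tfrac12\bigl(\kappa(\tau=n)+\kappa'(\tau=n)\bigr)$; applying the Harris analogue of Corollary~\ref{cor:gen} to $\kappa,\kappa'$ and scaling by $\tfrac12|\mu-\mu'|(\Omega)$ produces the claimed inequality. There is no real obstacle here: the single point that warrants verification is that the $p_n$ and $\pi$ appearing in Lemma~\ref{lem:har} are precisely those used to define $b_\ell(\cdot)$, which is guaranteed by the splitting construction in the proof of Lemma~\ref{lem:har}. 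Once this identification is made, the proof is a direct transcription of the atom case.
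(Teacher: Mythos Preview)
Your proposal is correct and follows exactly the approach indicated in the paper, which simply states that Corollary~\ref{cor:har} has the same proof as Corollary~\ref{cor:genn} (with Lemma~\ref{lem:har} playing the role of Lemma~\ref{lem:gen1}). Your explicit write-up of the intermediate Harris analogue of Corollary~\ref{cor:gen} and the subsequent Jordan decomposition step is precisely what the paper leaves implicit.
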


\subsection{An example}
\label{sec:har-examples}

Suppose we are in the setting of Section~\ref{sec:Harris}. In particular, $X$ is a Harris chain.
Recall~\eqref{eq:mumu} where $\tau$ is from~\eqref{eq:tau_def_xi}. Corollary~\ref{cor:har} provides a convenient way to estimate memory loss:

\begin{prop}\label{prop:harris_mixing}
Suppose that
$
b_\ell(n) = \|P^n\delta_0-P^{n+\ell} \delta_{0}\|_{\TV}
\lesssim
\rho(n)g(\ell)
$
uniformly in $n\geq\ell\geq 1$,
where $\rho,g$ are RV with $g$ increasing.
    Denote
    \begin{equation}\label{eq:theta_norm}
        \|\mu-\mu'\|_{g}
        = \sum_{\ell \geq 0} g(\ell) |\mu-\mu'|(\tau =\ell)
        = |\mu - \mu'|(\Omega) \, \E_{X_0\sim \lambda} g(\tau)\;,
    \end{equation}
    where $\lambda = \frac{|\mu - \mu'|}{|\mu - \mu'|(\Omega)}$.
    Then
    \begin{equation}\label{eq:rates_Harris}
        \|Q^n\mu - Q^n\mu'\|_{\TV} \lesssim \{\rho(n)+1/g(n)\}\|\mu-\mu'\|_{g}\;.
    \end{equation}
\end{prop}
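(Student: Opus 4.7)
The plan is to apply Corollary~\ref{cor:har}, which yields
\[
\|Q^n(\mu-\mu')\|_{\TV} \leq \sum_{t=1}^n |(\mu-\mu')(\tau=t)|\, b_{t-1}(n-t) + 2|\mu-\mu'|(\tau>n),
\]
and then dominate both the summand and the tail by an expression proportional to $g(t)|(\mu-\mu')(\tau=t)|$ times $\rho(n)+1/g(n)$. Summing over $t\geq 0$ will reconstruct $\|\mu-\mu'\|_g$, noting that $\tau\geq 1$ so the $t=0$ term contributes nothing.

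The main step is the uniform inequality
\[
b_{t-1}(n-t) \lesssim (\rho(n)+1/g(n))\,g(t), \qquad 1\leq t\leq n.
\]
I would split at $t=\lfloor n/2\rfloor$. For $2\leq t\leq\lfloor n/2\rfloor$, one has $n-t\geq t-1\geq 1$, so the hypothesis applies and gives $b_{t-1}(n-t)\lesssim \rho(n-t)g(t-1)\asymp \rho(n)g(t)$ by regular variation of $\rho,g$ together with $n-t\asymp n$ and $t-1\asymp t$. The term $t=1$ vanishes since $b_0(\cdot)\equiv 0$. For $t>\lfloor n/2\rfloor$, I use the trivial bound $b_{t-1}(n-t)\leq 2$ together with the observation that $g$ being increasing and regularly varying forces its RV index to be nonnegative, so uniform convergence of $g(\lambda x)/g(x)$ on compacts of $(0,\infty)$ yields $g(t)/g(n)\gtrsim 1$ uniformly for $t\geq n/2$; thus $2\lesssim g(t)/g(n)\leq (\rho(n)+1/g(n))g(t)$.

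For the tail, $g$ increasing gives $g(t)\geq g(n)$ for $t>n$, so
\[
2|\mu-\mu'|(\tau>n)\leq \frac{2}{g(n)}\sum_{t>n}g(t)\,|\mu-\mu'|(\tau=t).
\]
Combining with the sum over $t\leq n$ yields
\[
\|Q^n(\mu-\mu')\|_{\TV}\lesssim (\rho(n)+1/g(n))\sum_{t\geq 1}g(t)|\mu-\mu'|(\tau=t)=(\rho(n)+1/g(n))\|\mu-\mu'\|_g,
\]
as required. I foresee no substantial obstacle; the only mildly delicate point is the uniform comparison $g(t)\asymp g(n)$ for $t\geq n/2$, which is standard for RV functions. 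Finitely many small values of $n$, where the first regime above is empty, are handled by the trivial estimate $\|Q^n(\mu-\mu')\|_{\TV}\leq 2|\mu-\mu'|(\Omega) \leq (2/g(1))\|\mu-\mu'\|_g$ and absorbed into the implicit constant.
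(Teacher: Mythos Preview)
Your proposal is correct and follows essentially the same route as the paper: apply Corollary~\ref{cor:har}, split the sum at $t\approx n/2$, use the hypothesis $b_{t-1}(n-t)\lesssim \rho(n-t)g(t-1)\asymp\rho(n)g(t)$ on the lower range and the trivial bound $b_{t-1}(n-t)\lesssim 1$ together with $g(t)\gtrsim g(n)$ on the upper range, and handle the tail via monotonicity of $g$. The only cosmetic difference is that you phrase the core step as a uniform pointwise inequality $b_{t-1}(n-t)\lesssim(\rho(n)+1/g(n))g(t)$, whereas the paper bounds the two partial sums directly; the content is identical.
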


\begin{proof}
    Denote $\mu_\ell = \mu(\tau=\ell), \mu'_\ell = \mu'(\tau=\ell)$.
    Remark that
    \begin{equation*}
        | \mu_\ell-\mu'_\ell |
        = \Bigl| \int_{\Omega} \P(\tau=\ell\mid X_0=x) (\mu-\mu')(dx) \Bigr|
        \leq |\mu-\mu'|(\tau = \ell)\;.
    \end{equation*}
    By Corollary~\ref{cor:har},
    \begin{equation*}
        \| Q^n (\mu - \mu') \|_{\TV}
        \leq \sum_{\ell=1}^n \bigl| \mu_\ell - \mu'_\ell \bigr| \, b_{\ell-1} (n-\ell)
        + 2 |\mu-\mu'|(\tau > n)
        \;.
    \end{equation*}
    Then, using $b_\ell(n) \lesssim 1\wedge g(\ell) \rho(n)$,
    \begin{align*}
        \sum_{\ell=1}^{n} |\mu_{\ell}-\mu'_\ell| b_{\ell-1}(n-\ell)
        &\lesssim
        \sum_{n/2<\ell\leq n} |\mu_{\ell}-\mu'_\ell|
        +
        \sum_{1\leq \ell \leq n/2} |\mu_{\ell}-\mu'_\ell| g(\ell) \rho(n-\ell)
        \\
        &\lesssim (g(n)^{-1} + \rho(n))\|\mu-\mu'\|_g\;,
    \end{align*}
    where we used $\sum_{n/2<\ell \leq n} |\mu_{\ell}-\mu'_\ell| \lesssim g(n)^{-1}\|\mu-\mu'\|_g$.
    Finally, since $g$ is increasing, we clearly have $|\mu-\mu'|(\tau>n) \leq g(n)^{-1}\|\mu-\mu'\|_g$.
\end{proof}

\begin{rmk}
    Proposition~\ref{prop:harris_mixing} can be seen as a version of classical results for positive recurrent
    Markov chains, like~\cite[Theorem~4.2,~p27]{Lindvall_02_coupling}.
    However, Proposition~\ref{prop:harris_mixing} does not yield sharp rates in some simple situations.
    For example, consider the Markov chain~\eqref{eq:RMC} with $S = \{\circled{0}\}$ and $p_k = C k^{-\alpha-1}$, $\alpha \in (0,1)$,
    so that
    $\sum_{k > n} p_k = \rho(n)$ with $\rho(n) = C'n^{-\alpha}$.
    Then by Theorem~\ref{thm:general}, $b_\ell(n) \lesssim \rho(n) g(\ell)$ with $g(\ell) = \ell^\alpha$,
    and this is sharp by Corollary~\ref{cor:lower}.
    Let $\mu = \delta_0$ and $\mu' = \delta_1$. Then, attempting to apply
    Proposition~\ref{prop:harris_mixing}, we estimate
    $\P(\tau = n \mid X_0 \sim \lambda) \asymp p_n = C n^{-\alpha - 1}$, and hence
    $\E_{X_0\sim \lambda} g(\tau) \asymp \sum_{n\geq 1} n^{-1}$, which is infinite, and thus so is the right-hand side of~\eqref{eq:rates_Harris}.
    This is not surprising because~\eqref{eq:theta_norm} is a strong moment while we otherwise work with weak moments.

    To apply Proposition~\ref{prop:harris_mixing} to this example, we could instead use the bound $b_\ell(n) \lesssim \ell^\beta n^{-\beta}$ for $\beta<\alpha$. Then
    $\E_{X_0\sim \lambda} \tau^\beta \asymp \sum_{n\geq 1} n^{\beta-\alpha-1}<\infty$,
    which yields $\|Q^n\mu - Q^n\mu'\|_{\TV} \lesssim n^{-\beta}$
    but which is not sharp.

    It is possible to use Lemmas~\ref{lem:gen1} or~\ref{lem:har} directly to recover the sharp rate,
    or, of course,
    \[
        \|P^n(\delta_0 - \delta_1)\|_{\TV}
        = (1 - p_1)^{-1} \|P^n(\delta_0 - P \delta_0)\|_{\TV}
        = (1 - p_1)^{-1} b_1(n)
        \lesssim n^{-\alpha}
        \;.
    \]
\end{rmk}

\section{Ornstein's coupling}
\label{sec:Ornstein}

In this section we prove Theorem~\ref{thm:orn}, adapting Ornstein's proof~\cite{Ornstein_69_RW1,Ornstein_69_RW2}
of Orey's theorem. The proof is based on reduction to a symmetric random walk on integers and coupling
when the random walk reaches $0$.

\subsection{Meeting times and coupling inequality}

Consider probability distributions $\mu,\nu$ on a state space $\Omega$ and a transition operator $P\colon \cP(\Omega)\to \cP(\Omega)$.
We recall how to bound $\|P^n \mu - P^n\nu\|_{\TV}$ by coupling.
Consider $\Omega$-valued Markov chains $X,Y$, defined on the same probability space, with initial distributions $X_0\sim \mu,Y_0\sim\nu$.
Suppose that exists a random time $T>0$ such that $X_n = Y_n$ for all $n\geq T$.
Then
\[
\P(X_n \neq Y_n) \leq \P( T > n)
\]
which in particular implies that, for any measurable $A\subset \Omega$,
\begin{align*}
    \P (X_n \in A)
    & = \P(X_n\in A, \, T\leq n) + \P(X_n\in A, \, T>n)
    \leq \P(Y_n\in A, \, T\leq n) + \P(T>n)
    \\
    & \leq \P(Y_n\in A) + \P(T>n)\;.
\end{align*}
By symmetry, $\P (Y_n \in A) \leq
\P(X_n\in A) + \P(T>n)$.
We thus obtain the \emph{coupling inequality,} a simple yet very useful tool
for estimating total variation (see e.g.~\cite[Chapter~I]{Lindvall_02_coupling}):
\begin{equation}\label{eq:TV_coupling}
\|P^n\mu - P^n\nu\|_{\TV}
=
2\sup_A |\P(X_n \in A)-\P(Y_n \in A)| \leq
2\P(T>n)\;.
\end{equation}

\subsection{Proof of Theorem~\ref{thm:orn}}

    By the aperiodicity condition~\eqref{eq:aperiodic}, there exists $A\geq1$ such that $p_A,p_{A+1}>0$.
    Let $X$ be the Markov chain~\eqref{eq:RMC} with $X_0=0$.
    Let $a_0 = \inf\{j\geq \ell \,:\, X_j = 0\}$ and $a_{k+1} = \inf \{j > a_k : X_j = 0 \}$ index visits of $X$ to $0$ after time $\ell$.
    Let $\Delta$ denote backward increments, so $\Delta a_k = a_k - a_{k-1}$
    and $\P(\Delta a_k = n) = p_n$ for all $k\geq 1$.

On the same (possibly enriched) probability space as $X$,
we now construct another Markov chain $Y$ with the same transition probabilities but with initial state $Y_0 = X_\ell \sim P^\ell \delta_0$.
To define $Y$, it suffices to specify the times $b_0 = \inf \{j\geq 0 \,:\,Y_j=0\}$ and $b_{k+1} = \inf \{j>b_{k}\,:\, Y_j=0\}$
that index the visits of $Y$ to $0$.
First, let $b_0 = a_0-\ell$ so that $Y_0 = X_\ell,Y_{1} = X_{\ell+1},\ldots,Y_{b_0} = X_{a_0}$.
Then we define $b_k$ for $k\geq 1$ by
    \[
        \Delta b_k =
        \begin{cases}
            \Delta a_k  & \text{if } a_{k-1} = b_{k-1} \text{ or } \Delta a_k \notin \{p_A,p_{A+1}\} \;, \\
            s_k        & \text{otherwise,}
        \end{cases}
    \]
    where $s_k$ is a sequence of independent (also from $X$) and identically distributed
    random variables with $\P(s_k = A) = p_A / (p_A + p_{A+1})$ and $\P(s_k = A+1) = p_{A+1} / (p_A + p_{A+1})$.
    This completes the construction of $Y$.
    Remark that, for $k\geq 1$, $\Delta b_k$ has the same distribution as $\Delta a_k$, thus $Y$ indeed has the same
    transition operator as $X$.

    Denote $d_k = a_k - b_k$. Let $k_0 = 0$ and let $k_i$, $i \geq 1$, index when $d_k$ can change, namely
    $k_{i+1} = \inf \{j > k_i \,:\, \Delta a_j \in \{p_A,p_{A+1}\}\}$.
    Observe that $\Delta a_{k_i} = a_{k_i} - a_{k_{i-1}}$ are i.i.d.\ with $\P(\Delta a_{k_i} \geq j) \leq \rho(j)$.
    Furthermore, $d_{k_0},d_{k_1},\ldots$ is a symmetric random walk on integers starting at $d_{k_0} = d_0 = \ell \geq 1$
    and stopping when it reaches $0$, with $\Delta d_{k_i} = d_{k_i} - d_{k_{i-1}} \in\{-1,0,1\}$ and
    \[
        \P(\Delta d_{k_i} = \pm 1 \mid d_{k_{i-1}} \neq 0)
        = p_A p_{A+1} / (p_A + p_{A+1})^2
        \;.
    \]
    Let $\sigma = \inf \{j > 0 \,:\, d_{k_j} = 0\}$.
    Consider a walk $Z_i$ with the same transition probabilities as $d_{k_i}$ but without stopping,
    such that $Z_i = \ell - d_{k_i}$ for $i \leq \sigma$.
    Note that $\sigma=\{j>0\,:\, Z_j=\ell\}$.
    By the reflection principle,
    \begin{equation*}
    \P(\sigma \leq n) = \P( \max_{i\leq n} Z_i \geq \ell ) = \P(Z_n=\ell) + 2\P(Z_n > \ell) = \P(Z_n=\ell) + \P(|Z_n| > \ell)
    \end{equation*}
    and thus, uniformly in $n,\ell$, $\P(\sigma > n) \leq \P(|Z_n| \leq \ell) \lesssim \ell n^{-1/2}$.
    
    Now let $m,n\geq 0$.
    Using~\eqref{eq:TV_coupling} and that $X_n = Y_n$ for all $n \geq a_{k_\sigma}$, we obtain
    \begin{equation}
    \begin{split}\label{eq:TV_bound}
        \|P^n\delta_0 - P^{n+\ell}\delta_0\|_{\TV}
        & \leq 2 \P(a_{k_\sigma} > n)
        \leq 2 \P ( a_{k_m} \geq n ) + 2\P ( \sigma \geq m )
        \\
        & \lesssim \P ( a_{k_m} \geq n ) + \ell m^{-1/2}
        \;.
    \end{split}
\end{equation}
    It remains to bound $\P ( a_{k_m} \geq n )$ and choose a suitable $m$.
    Let $c_i = \Delta a_{k_i} \bone\{\Delta a_{k_i} \leq B\}$ with $0<B\leq n$.
    Then
    \begin{equation}\label{eq:two_probs}
    \P ( a_{k_m} \geq n ) \leq \P \Bigl( \max_{i\leq m} \Delta a_{k_i} \geq B \Bigr)
    + \P \Bigl( \sum_{i=1}^m c_i \geq n \Bigr) \;.
    \end{equation}
    For the first probability, we have
    \begin{equation}\label{eq:first_prob}
    \P \Bigl( \max_{i\leq m} \Delta a_{k_i} \geq B \Bigr) \leq \sum_{i=1}^m \P(\Delta a_{k_i} \geq B )
    \leq m \rho(B)\;.
    \end{equation}
    For the second probability, we use Hoeffding's inequality.
    For this, using $\P(\Delta a_{k_n}=i) = p_i$, write
    \begin{equation*}
        \E c_1
        = \sum_{i=1}^B i p_i
        \quad \text{and} \quad
        \Var (c_1)
        \leq \E c_1^2
        = \sum_{i=1}^B i^2 p_i
        \;.
    \end{equation*}
    Recall that we assume that $\alpha < 3/2$.
    Then $\Var (c_1) \lesssim B^2 \rho(B)$.
    By~\cite[Theorem~3]{Hoeffding63}, for $m \E c_1 \leq n/2$,
    \begin{equation*}
    \P\Bigl( \sum_{i=1}^m c_i \geq n \Bigr)
    \leq \P\Bigl( \sum_{i=1}^m c_i - m \E c_1 \geq n/2 \Bigr)
    \leq \exp \Bigl[ - \frac{n}{2B} h \Bigl( \frac{2 m \Var (c_1)}{nB} \Bigr) \Bigr]
    \end{equation*}
    where $h(z) = (1+z)\log(1+z^{-1}) - 1$.
    Let $\gamma = \inf_{z\in (0,1)} \frac{h(z)}{\log(z^{-1})}$ and note that $\gamma > 0$. Thus
    \begin{align*}
    \P\Bigl(\sum_{i=1}^m c_i \geq n\Bigr)
    & \leq \exp \Bigl[- \frac{n}{2B} \gamma \log \Bigl( \frac{n B}{2 m \Var (c_1)} \Bigr) \Bigr]
    = \Bigl( \frac{2 m \Var(c_1)}{n B} \Bigr)^{\frac{\gamma n}{2 B}}
    \\
    & \leq K^{\gamma n/B} \Bigl( \frac{m B \rho(B)}{n} \Bigr)^{\frac{\gamma n}{2 B}}
    \;,
    \end{align*}
    where $K$ is uniform in $n,m$ and where we used that, if $\frac{2 m\Var(c_1)}{n B} \geq 1$, then the first bound trivially holds.
    Taking $B = \frac{n}{2 p}$ for $p\geq 1$, we obtain
    \begin{equation*}
    \P \Bigl(\sum_{i=1}^m c_i \geq n \Bigr) \lesssim (m \rho(n))^{\gamma p}
    \end{equation*}
    and therefore, by~\eqref{eq:TV_bound},~\eqref{eq:two_probs}, and~\eqref{eq:first_prob},  for fixed $p \geq \gamma^{-1}$
    and uniformly in $m,n$ such that $m\E c_1 \leq n/2$,
    \begin{equation}
    \begin{split}\label{eq:aksigma}
    \|P^n\delta_0 - P^{n+\ell}\delta_0\|_{\TV}
    & \lesssim \ell m^{-1/2} + m \rho(n) + (m \rho(n))^{\gamma p}
    \\
    & \lesssim \ell m^{-1/2} + m \rho(n)
    \;.
    \end{split}
\end{equation}

    Now we treat the cases $\alpha < 1$, $\alpha = 1$ and $\alpha \in (1, \frac{3}{2})$ separately.

    First suppose that $\alpha<1$. The result is trivial if $\ell^{2/3} \rho(n)^{1/3} > 1$,
    so we assume that $\ell^{2/3} \rho(n)^{1/3} \leq 1$.
    To optimise, we take $m = 1+\floor{\ell^{2/3} \rho(n)^{-2/3}}$,
    which implies $\ell m^{-1/2} \asymp m\rho(n) \asymp \ell^{2/3} \rho(n)^{1/3}$.
    Note that
    \[
        m \E c_1
        \lesssim m B\rho(B)
        \ll m n \rho(n)
        \asymp \ell^{2/3} \rho(n)^{1/3} n
        \leq n
        \;,
    \]
    where the asymptotic ratio can be made arbitrarily large by taking $p$ large.
    Hence with a suitably large $p$ we achieve $m \E c_1 \leq n/2$
    for all sufficiently large $n$.
    Therefore~\eqref{eq:aksigma} holds and we conclude
    $
    \|P^n\delta_0 - P^{n+\ell}\delta_0\|_{\TV} \lesssim \ell^{2/3} \rho(n)^{1/3}
    $.
    
Suppose now that $\alpha=1$. Then $\E c_1 \lesssim M(B) = 1+\int_1^B \rho(t)\,dt$, which is slowly varying in $B$ and increasing.
We take $m=1+\floor{\delta\ell^{2/3} \rho(n)^{-2/3}}$ for some small $\delta>0$.
Now the constraint $m \E c_1\leq n/2$ is satisfied for sufficiently large $n$ if
$\ell \ll \rho(n) M(n)^{-3/2}n^{3/2}$, and the proof proceeds as for $\alpha < 1$.
The case $\ell \leq \rho(n) M(n)^{-3/2}n^{3/2}$ as in the statement of the theorem
follows from a bounded number of applications of the triangle inequality.

Suppose now that $\alpha \in (1,\frac32)$. Then $\E c_1 \lesssim 1$.
To optimise, we take $m$ as a small multiple of $1+\floor{\ell^{2/3} \rho(n)^{-2/3}}$,
    so that $\ell m^{-1/2} \asymp m\rho(n) \asymp \ell^{2/3} \rho(n)^{1/3}$.
    Then, for $n$ sufficiently large, $m\E c_1 \leq \ell^{2/3} \rho(n)^{-2/3} \leq n/2$
    by the assumption that $\ell \leq \rho(n)n^{3/2}$.
    Once again~\eqref{eq:aksigma} holds and we conclude $
    \|P^n\delta_0 - P^{n+\ell}\delta_0\|_{\TV} \lesssim \ell^{2/3} \rho(n)^{1/3}
    $.
The proof of Theorem~\ref{thm:orn} is complete.

\section{Monotonicity}
\label{sec:monotone}

In this section we prove Theorem~\ref{thm:DFR}.
Recall the definition of $u_n$ and $z_n$ in Section~\ref{sec:not}.

\begin{proof}[Proof of Theorem~\ref{thm:DFR}]
    Let $\mu_n = P^n\delta_0$.
    By~\eqref{eq:mu_diff},
    \begin{equation*}
        \|\mu_n - \mu_{n+\ell}\|_{\TV}
        =
        \sum_{m=n+1}^{n+\ell} u_{n+\ell-m} z_{m}
        +
        \sum_{m=0}^n (u_{n-m}-u_{n+\ell-m}) z_m
        \;.
    \end{equation*}
    At the same time, $(\mu_n - \mu_{n + \ell})(\{0, \ldots, n+\ell\}) = 0$, so again
    by~\eqref{eq:mu_diff},
    \begin{equation*}
        \sum_{m=n+1}^{n+\ell} u_{n+\ell-m} z_{m} = \sum_{m=0}^n
        (u_{n-m} - u_{n+\ell-m})z_m\;.
    \end{equation*}
    Hence
    \[
        \|\mu_n-\mu_{n+\ell}\|_{\TV}
        = 2 \sum_{m=n+1}^{n+\ell} u_{n+\ell-m} z_{m}
        \;. \qedhere
    \]
\end{proof}

\subsection{Proof of Lemma~\ref{lem:HF}}
\label{sec:HF}

Recall that $z_0 = 1$ and $z_n = z_{n+1} + p_{n+1}$.
Assume the DFR in this subsection.
Excluding the trivial case $p_1 = 1$ and $p_n = 0$ for $n \geq 2$, assume that
$p_n>0$ for all $n\geq 1$, so that
\[
    \frac{p_n}{z_{n-1}} \geq \frac{p_{n+1}}{z_n}
    \quad \text{for} \quad
    n \geq 1
    \;.
\]

\begin{lemma}
    \label{lem:bn}
    Suppose that $b_k$, $k \geq 0$, is a non-negative sequence with $\frac{b_{k+1}}{b_k} \leq \frac{z_{k+1}}{z_k}$
    for all $k$. Let $c_k = b_{k+1} + p_{k+1} b_0$ for $k \geq 0$. Then
    $\frac{c_{k+1}}{c_k} \leq \frac{z_{k+1}}{z_k}$ for all $k$.
\end{lemma}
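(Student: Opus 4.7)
The plan is to show the equivalent inequality $c_{k+1} z_k - c_k z_{k+1} \leq 0$ by reducing it, via the hypothesis on $b$ and the relation $p_{n+1} = z_n - z_{n+1}$, to a product of two simple factors whose signs can be read off easily. First I would substitute $c_{k+1} = b_{k+2} + p_{k+2} b_0$ and $c_k = b_{k+1} + p_{k+1} b_0$ to write
\[
c_{k+1} z_k - c_k z_{k+1}
= (b_{k+2} z_k - b_{k+1} z_{k+1}) + b_0 (p_{k+2} z_k - p_{k+1} z_{k+1}).
\]
The hypothesis gives $b_{k+2} z_{k+1} \leq b_{k+1} z_{k+2}$, hence $b_{k+2} z_k \leq b_{k+1} z_{k+2} z_k / z_{k+1}$, so it suffices to prove
\[
\frac{b_{k+1}}{z_{k+1}}\bigl(z_{k+2} z_k - z_{k+1}^2\bigr) + b_0 \bigl(p_{k+2} z_k - p_{k+1} z_{k+1}\bigr) \leq 0.
\]

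The key identity is $p_{k+2} z_k - p_{k+1} z_{k+1} = -(z_{k+2} z_k - z_{k+1}^2)$, which follows by substituting $p_{k+2} = z_{k+1} - z_{k+2}$ and $p_{k+1} = z_k - z_{k+1}$ and expanding. Using this, the displayed inequality factors as
\[
\bigl(z_{k+2} z_k - z_{k+1}^2\bigr) \left(\frac{b_{k+1}}{z_{k+1}} - b_0 \right) \leq 0.
\]

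It remains to check the two factors have opposite signs. The factor $z_{k+2} z_k - z_{k+1}^2$ is non-negative: the DFR assumption that $p_n / z_{n-1}$ is non-increasing translates, via $z_n/z_{n-1} = 1 - p_n/z_{n-1}$, to $z_n/z_{n-1}$ being non-decreasing, which gives $z_{k+1}/z_k \leq z_{k+2}/z_{k+1}$. The factor $b_{k+1}/z_{k+1} - b_0$ is non-positive: iterating the hypothesis $b_{j+1}/b_j \leq z_{j+1}/z_j$ telescopically for $j = 0, 1, \ldots, k$ yields $b_{k+1}/b_0 \leq z_{k+1}/z_0 = z_{k+1}$. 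Thus the product is non-positive, finishing the proof. The only subtle step is spotting the factorization via the identity above; once it is seen, the signs follow directly from DFR and the telescoped hypothesis, with no further computation needed.
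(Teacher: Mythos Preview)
Your proof is correct and uses the same two ingredients as the paper's---the DFR-induced log-convexity of $z$ (equivalently, $p_{k+2}/p_{k+1}\le z_{k+1}/z_k$, which is the same as your $z_{k+2}z_k\ge z_{k+1}^2$) and the telescoped hypothesis $b_{k+1}\le b_0 z_{k+1}$---but organizes them differently. The paper writes $c_{k+1}/c_k$ as a convex combination $\lambda\,\frac{b_{k+2}}{b_{k+1}}+(1-\lambda)\,\frac{p_{k+2}}{p_{k+1}}$, disposes of the case $\frac{b_{k+2}}{b_{k+1}}\le\frac{z_{k+1}}{z_k}$ immediately, and in the remaining case exploits monotonicity in $b_0$ together with $b_0\ge b_{k+1}/z_{k+1}$ before a short algebraic simplification. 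Your route bypasses the case split entirely: after bounding $b_{k+2}z_k$ via the hypothesis, the identity $p_{k+2}z_k-p_{k+1}z_{k+1}=-(z_{k+2}z_k-z_{k+1}^2)$ lets the whole expression factor as a product with visibly opposite signs. Both arguments are short; yours is the more streamlined algebraically, while the paper's convex-combination viewpoint makes the role of DFR (namely $p_{k+2}/p_{k+1}\le z_{k+1}/z_k$) slightly more transparent.
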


\begin{proof}
    Fix $k \geq 0$ and write
    \[
        \frac{c_{k+1}}{c_k}
        = \frac{b_{k+2} + p_{k+2} b_0}{b_{k+1} + p_{k+1} b_0}
        = \lambda\frac{b_{k+2}}{b_{k+1}}+(1-\lambda)\frac{p_{k+2}}{p_{k+1}}
        \;,
    \]
    where $\lambda=\frac{b_{k+1}}{b_{k+1} + p_{k+1} b_0}\in (0,1]$.
    Since $\frac{p_{k+2}}{p_{k+1}} \leq \frac{z_{k+1}}{z_k}$,
    it suffices to consider the case $\frac{b_{k+2}}{b_{k+1}} > \frac{z_{k+1}}{z_k}$.
    Then, since $\lambda$ is monotone decreasing function of $b_0$ (with all other variables fixed),
    so is $\frac{c_{k+1}}{c_k}$.
    In turn, from $\frac{b_{k+1}}{b_0} \leq \frac{z_{k+1}}{z_0}$ we obtain $b_0 \geq \frac{b_{k+1}}{z_{k+1}}$
    and
    \[
        \frac{c_{k+1}}{c_k}
        \leq \frac{
            b_{k+2} + p_{k+2} \frac{b_{k+1}}{z_{k+1}}
        }{
            b_{k+1} + p_{k+1} \frac{b_{k+1}}{z_{k+1}}
        }
        = \frac{
            \frac{b_{k+2}}{b_{k+1}} + \frac{p_{k+2}}{z_{k+1}}
        }{
            1 + \frac{p_{k+1}}{z_{k+1}}
        }
        \leq \frac{
            \frac{z_{k+2}}{z_{k+1}} + \frac{p_{k+2}}{z_{k+1}}
        }{
            1 + \frac{p_{k+1}}{z_{k+1}}
        }
        = \frac{z_{k+1}}{z_k}
        \;.
        \qedhere
    \]
\end{proof}

\begin{cor}\label{cor:t_monontone}
    $u_n \geq u_{n+1}$ for all $n \geq 0$.
\end{cor}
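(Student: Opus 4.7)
The plan is to iterate Lemma~\ref{lem:bn} starting from a carefully chosen initial sequence, so that the zeroth entries of the iterates reproduce the renewal probabilities $u_n$. Set $b^{(0)}_k := p_{k+1}$ for $k \geq 0$ (all positive, since we have excluded the trivial case $p_1=1$), and define inductively $b^{(n+1)}_k := b^{(n)}_{k+1} + p_{k+1}\, b^{(n)}_0$, matching the transformation $b\mapsto c$ of Lemma~\ref{lem:bn}.

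The first step is to check the ratio hypothesis for $b^{(0)}$: rearranging the DFR condition $p_{k+1}/z_k \geq p_{k+2}/z_{k+1}$ directly gives $p_{k+2}/p_{k+1} \leq z_{k+1}/z_k$, which is exactly $b^{(0)}_{k+1}/b^{(0)}_k \leq z_{k+1}/z_k$. Applying Lemma~\ref{lem:bn} inductively in $n$ then yields the ratio bound $b^{(n)}_{k+1}/b^{(n)}_k \leq z_{k+1}/z_k$ for all $n,k \geq 0$.

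The second step is to identify $b^{(n)}_0 = u_{n+1}$. Unrolling the recursion $b^{(n)}_k = b^{(n-1)}_{k+1} + p_{k+1}\, b^{(n-1)}_0$ all the way down to $b^{(0)}$ gives
\[
b^{(n)}_0 = p_{n+1} + \sum_{j=1}^n p_j\, b^{(n-j)}_0\;,
\]
which matches the renewal equation $u_{n+1} = \sum_{j=1}^{n+1} p_j\, u_{n+1-j}$; together with the base case $b^{(0)}_0 = p_1 = u_1$, induction on $n$ confirms the claim.

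Combining the two ingredients, for every $n \geq 0$,
\[
u_{n+2} = b^{(n+1)}_0 = b^{(n)}_1 + p_1\, b^{(n)}_0 \leq \frac{z_1}{z_0}\, b^{(n)}_0 + p_1\, b^{(n)}_0 = (z_1 + p_1)\, u_{n+1} = u_{n+1}\;,
\]
using $z_0 = 1$ and $z_0 = z_1 + p_1$; the remaining base case $u_1 = p_1 \leq 1 = u_0$ is immediate. The main conceptual obstacle is spotting the right initial sequence $b^{(0)}_k = p_{k+1}$: once chosen, both the DFR-based ratio bound and the identification with the renewal equation fall out, and the rest of the argument is routine bookkeeping.
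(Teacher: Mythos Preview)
Your proof is correct and follows essentially the same route as the paper: iterate Lemma~\ref{lem:bn} on the recursion $b^{(n+1)}_k = b^{(n)}_{k+1} + p_{k+1}\,b^{(n)}_0$ and identify the zeroth entries with the renewal probabilities. The only cosmetic difference is that the paper interprets $b^n_k$ probabilistically as the law of a time-reversed chain (so $b^n_0 = u_n$), whereas you start one step later with $b^{(0)}_k = p_{k+1}$ (the paper's $b^1_k$) and verify $b^{(n)}_0 = u_{n+1}$ algebraically via the renewal equation---which, incidentally, sidesteps the $0/0$ ratios implicit in the paper's base case $b^0_k = \delta_{k,0}$.
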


\begin{proof}
    Consider a Markov chain $Y_n$ on non-negative integers with the connection graph~\eqref{eq:RMC}
    but all arrows reversed, starting at $Y_0 = 0$ with transition probabilities
    \[
        \P(Y_{n+1} = k \mid Y_n = k+1)
        = 1 \quad \text{and} \quad
        \P(Y_{n+1} = k \mid Y_n = 0)
        = p_{k+1}
        \quad \text{for} \quad k \geq 0
        \;.
    \]
    Set $b_k^n = \P(Y_n = k)$. Note that $b_k^{n+1} = b_{k+1}^n + p_{k+1} b_0^n$.
    Observe that the renewal probabilities $\P(Y_n = 0 \mid Y_n = 0)$ coincide with those for the Markov chain~\eqref{eq:RMC},
    hence $u_n = b_0^n$. By Lemma~\ref{lem:bn} applied inductively,
    $\frac{b_{k+1}^n}{b_k^n} \leq \frac{z_{k+1}}{z_k}$ for all $n,k$,
    and in particular $\frac{b_1^n}{b_0^n} \leq \frac{z_1}{z_0}$.
    Then
    \[
        \frac{u_{n+1}}{u_n}
        = \frac{b_1^n + p_1 b_0^n}{b_0^n}
        = \frac{b_1^n}{b_0^n} + p_1
        \leq \frac{z_1^n}{z_0^n} + p_1
        = 1
        \;.
        \qedhere
    \]
\end{proof}

\section{Practical rates}
\label{sec:proof_general}
 
In this section we prove Theorem~\ref{thm:general}.
Without loss of generality we assume that $\rho \colon [0,\infty) \to (0,\infty)$ is monotone
and continuous, with $\rho(0)=\rho(1)=1$.
Let us write $\rho(z) = z^{-\alpha} L(z)$ for $z \geq 1$ with a slowly varying function $L$. Define
\[
    K(z) = 1+\int_1^z t^{-1} L(t)^{-2}\, dt
    \;, \quad
    z \geq 1
    \;.
\]
\begin{rmk}
    \label{rmk:K}
    $K(z)$ is strictly increasing in $z$ and $K\geq 1$ and
    \begin{equation}
        \label{eq:KL2}
        K(z) \gtrsim L(z)^{-2}
        \;.
    \end{equation}
    Moreover, by Karamata's theorem from Section~\ref{subsec:RV},
    $K(z)$ is slowly varying.
\end{rmk}
Denote
\[
    \mu_n = P^n \delta_0
    \;.
\]
Throughout this section we assume that~\eqref{eq:p_bound_imp} holds.
Recall $u_n$ from Section~\ref{sec:not}.

\subsection{Proof of (\ref{eq:some_rate})}
\label{sec:proof_some_rate}

We admit for a moment the following lemma, postponing the proof:

\begin{lemma}\label{lem:u_diff}
Suppose~\eqref{eq:p_bound_imp} holds. Then, uniformly in $n\geq 1$,
\begin{equation}\label{eq:un_bound}
u_n\lesssim
\begin{cases}
\rho(n) \quad &\text{if } \alpha \in (0,\frac12)\;,\\
\rho(n) K(n) \quad &\text{if } \alpha=\frac12\;,\\
n^{-1}\rho(n)^{-1} \quad &\text{if } \alpha \in (\frac12,1)\;.
\end{cases}
\end{equation}
Furthermore, uniformly in $\ell\geq 1$ and $n\geq 2\ell$,
\begin{equation}\label{eq:u_diff}
|u_n - u_{n-\ell}| \lesssim
\rho(n)
\times
\begin{cases}
1 \quad &\text{if } \alpha \in (0,\frac12)\;,\\
K (\ell) \quad &\text{if } \alpha = \frac12\;,\\
\ell^{-1} \rho(\ell)^{-2} \quad &\text{if } \alpha \in (\frac12,1)\;.
\end{cases}
\end{equation}
\end{lemma}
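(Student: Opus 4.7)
My plan is to analyse the generating functions
\[
U(s)=\sum_{n\geq 0}u_n s^n=\frac{1}{(1-s)Z(s)},\qquad Z(s)=\sum_{n\geq 0}z_n s^n,
\]
and to extract the bounds~\eqref{eq:un_bound} and~\eqref{eq:u_diff} via Cauchy contour integration on a circle $|s|=r$ with $r=1-1/n$. For the increments the key identity is
\[
\sum_{n\geq 0}(u_n-u_{n-\ell})s^n=(1-s^\ell)U(s)=\frac{Q_\ell(s)}{Z(s)},\qquad Q_\ell(s)=1+s+\cdots+s^{\ell-1}.
\]

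The first and most delicate step is to establish the two-sided estimate
\[
|Z(re^{i\theta})|\asymp |1-re^{i\theta}|^{-1}\,\rho\bigl(1/|1-re^{i\theta}|\bigr)
\]
in a suitable range of $(r,\theta)$ near $(1,0)$. The upper bound follows from $z_k\leq\rho(k)$ and Karamata's theorem. The lower bound uses $z_k\geq c\rho(k)$ together with the monotonicity $z_k\searrow 0$ (automatic from $z_k=(1-q_0)\cdots(1-q_{k-1})$): the Dirichlet-type kernel $\sum_{k\leq N}e^{ik\theta}$ stays coherent over blocks of size $\asymp 1/|\theta|$, so $|\Re Z(re^{i\theta})|\gtrsim\sum_{k\lesssim 1/|\theta|}z_k r^k\asymp |\theta|^{-1}\rho(1/|\theta|)$, with the tail absorbed by summation by parts and the hypothesis~\eqref{eq:p_bound_imp}.

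With this estimate I would prove~\eqref{eq:un_bound} by writing
\[
u_n=\frac{1}{2\pi r^n}\int_{-\pi}^{\pi}U(re^{i\theta})\,e^{-in\theta}\,d\theta
\]
and splitting the $\theta$-integral into the regions $|\theta|\lesssim 1/n$ and $|\theta|\gtrsim 1/n$. On the first, $|U(re^{i\theta})|\asymp 1/\rho(1/|\theta|)$ yields a direct estimate; on the second, integration by parts in $\theta$ pays a factor $1/n$ in exchange for the bound $|U'(re^{i\theta})|\asymp |\theta|^{-1}/\rho(1/|\theta|)$, leading to an integral of the form $\int_{1/n}^\pi |\theta|^{-1}\rho(1/|\theta|)^{-1}\,d\theta$. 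The three regimes in~\eqref{eq:un_bound} correspond to whether this Karamata integral is dominated at its lower endpoint (giving $\rho(n)$ for $\alpha<\tfrac12$), at its upper endpoint (giving $n^{-1}\rho(n)^{-1}$ for $\alpha>\tfrac12$), or is logarithmically divergent (producing $K(n)$ from Remark~\ref{rmk:K} for $\alpha=\tfrac12$). For~\eqref{eq:u_diff} the same machinery is applied to $Q_\ell/Z$; the additional bound $|Q_\ell(re^{i\theta})|\lesssim \min(\ell,|1-re^{i\theta}|^{-1})$ provides damping and forces a three-way split $|\theta|\leq 1/n$, $1/n\leq|\theta|\leq 1/\ell$, $|\theta|\geq 1/\ell$ whose contributions exactly match the shape of~\eqref{eq:u_diff}.

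The main obstacle I expect is the lower bound on $|Z(re^{i\theta})|$ off the real axis. The assumption $z_n\asymp\rho(n)$ is weaker than regular variation of $z_n$ itself, so standard Karamata/Tauberian asymptotics for $Z(e^{-\lambda})$ as $\lambda\downarrow 0$ do not transfer directly to complex arguments, and phase cancellations in $\sum z_n(re^{i\theta})^n$ must be ruled out using the monotonicity of $z_k$ and the aperiodicity condition~\eqref{eq:aperiodic}. A secondary delicate point is the borderline case $\alpha=\tfrac12$, where the factor $K(n)$ or $K(\ell)$ must be extracted with sharp constants from a slowly divergent integral, and where the derivatives entering the integration by parts have to be estimated uniformly over the transition regime.
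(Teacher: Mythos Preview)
Your overall strategy --- Fourier/Tauberian analysis of $U=1/(1-P)$, a lower bound on $|1-P|$ near $x=0$, and oscillatory-integral estimates for the Fourier coefficients --- is exactly what the paper does. The paper works directly on the unit circle rather than on $|s|=1-1/n$, but this is cosmetic: on the unit circle the paper replaces your integration by parts by the finite-difference identity $\int Ue_{-n} = \tfrac12\int\{U(x)-U(x+\pi/n)\}e_{-n}$ (Lemma~\ref{lem:int_f_modulus}). For the lower bound the paper works with $1-P$ rather than $Z$: writing $\Re(1-P(x))=\sum_m p_m(1-\cos mx)\gtrsim x^2\sum_{m<\pi/|x|}m^2 p_m$ and using the \emph{two-sided} bound~\eqref{eq:p_bound_imp} on blocks $[\eps N,N)$ gives $\Re(1-P(x))\gtrsim\rho(1/|x|)$ in one line (Lemma~\ref{lem:1-P}); this is more direct than your Dirichlet-kernel argument on $Z$.

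There is, however, a genuine error in your execution. The claimed bound $|U'(re^{i\theta})|\asymp|\theta|^{-1}/\rho(1/|\theta|)$ requires $|P'(re^{i\theta})|\lesssim|\theta|^{-1}\rho(1/|\theta|)$, and this \emph{fails} under~\eqref{eq:p_bound_imp} alone. Take $p_{2^j}\asymp 2^{-\alpha j}$ and $p_k=0$ otherwise (plus small $p_1,p_2$ for aperiodicity); then $z_k\asymp k^{-\alpha}$, yet $|P'(re^{i\pi})|\asymp\sum_{2^j\lesssim n}2^{(1-\alpha)j}\asymp n^{1-\alpha}$, not $O(1)$. Under~\eqref{eq:p_bound_imp} the only available regularity is $|P|_{\CC^\rho}<\infty$ (Lemma~\ref{lem:P_Hol}), which yields the weaker $|U(x+\delta)-U(x)|\lesssim\rho(1/\delta)\,\rho(1/|x|)^{-2}$ (equation~\eqref{eq:U_delta}). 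This is precisely why the paper uses finite differences rather than derivatives; the pointwise bound $|P'(x)|\lesssim|x|^{-1}\rho(1/|x|)$ is established only later (equation~\eqref{eq:P_deriv}) under the extra hypothesis~\eqref{eq:p_reg}. Relatedly, your Karamata bookkeeping is off: the integral $\int_{1/n}^\pi|\theta|^{-1}\rho(1/|\theta|)^{-1}\,d\theta$ has integrand $\asymp\theta^{-1-\alpha}$ and is dominated at the lower endpoint for \emph{every} $\alpha\in(0,1)$, giving $n^{-1}\rho(n)^{-1}$ throughout --- no trichotomy. The three-case split at $\alpha=\tfrac12$ in~\eqref{eq:un_bound} comes instead from $\int_{1/n}^\pi\rho(1/|\theta|)^{-2}\,d\theta$ (integrand $\asymp\theta^{-2\alpha}$), which is what the correct H\"older increment bound on $U$ produces. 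The same correction applies to~\eqref{eq:u_diff}, where the paper additionally needs the localised oscillatory estimate of Lemma~\ref{lem:int_over_I} to handle the region $|\theta|>1/\ell$ --- a step your three-way split glosses over.
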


\begin{rmk}\label{rem:u_n_bound}
The key result of~\cite{GL62} is that, if $\alpha>1/2$ and $z_n = \rho(n)$,
then $\lim_{n \to \infty} n\rho(n)u_n = \frac{\sin \pi \alpha}{\pi}$.
This is consistent with~\eqref{eq:un_bound}.
\end{rmk}

\begin{lemma}\label{lem:mu_diff}
Suppose that $u_n \leq g(n)$ where $g$ is RV with index in $(-1,0)$. 
Then, uniformly in $n\geq0$, $\ell\geq1$,
\begin{equation*}
\| \mu_{n+\ell} - \mu_n \|_{\TV} \lesssim
\ell g(\ell) \rho(n)
+
\sum_{r=\ell}^{n+\ell} \rho(n+\ell-r) |u_r - u_{r-\ell}|\;.
\end{equation*}
\end{lemma}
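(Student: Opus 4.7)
The plan is to apply the explicit decomposition~\eqref{eq:mu_diff} of $\mu_{n+\ell}-\mu_n$ as a signed measure on $\N$, and then read off its total variation directly. Since the two sums in~\eqref{eq:mu_diff} are supported on disjoint sets of indices, namely $\{n+1,\ldots,n+\ell\}$ and $\{0,\ldots,n\}$, no cancellation occurs between them, and the total variation of a discrete signed measure is the $\ell^1$-norm of its atomic weights. Thus
\[
    \|\mu_{n+\ell}-\mu_n\|_{\TV}
    = \sum_{m=n+1}^{n+\ell} u_{n+\ell-m}\,z_m
    + \sum_{m=0}^{n} |u_{n+\ell-m}-u_{n-m}|\,z_m\;.
\]

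For the first sum (the newly-created mass at indices $> n$) I would change variables $k=n+\ell-m$, rewriting it as $\sum_{k=0}^{\ell-1} u_k z_{n+\ell-k}$. Since $z_j - z_{j+1} = p_{j+1}\geq 0$, the sequence $z_j$ is non-increasing, so $z_{n+\ell-k}\leq z_n$; combined with the hypothesis~\eqref{eq:p_bound_imp} this yields $z_{n+\ell-k}\leq\rho(n)$ uniformly in $k\in\{0,\ldots,\ell-1\}$. Pulling out $\rho(n)$ and applying $u_k\leq g(k)$, the problem reduces to estimating $\sum_{k=0}^{\ell-1} g(k)$. Since $g$ is RV with index in $(-1,0)$, Karamata's theorem (in the discrete form recalled in Section~\ref{subsec:RV}) gives $\sum_{k=0}^{\ell-1} g(k)\asymp \ell g(\ell)$, contributing the first term $\ell g(\ell)\rho(n)$ in the desired bound.

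For the second sum (the transported mass) I would substitute $r = n+\ell-m$ so it becomes $\sum_{r=\ell}^{n+\ell} |u_r-u_{r-\ell}|\,z_{n+\ell-r}$, and bound $z_{n+\ell-r}\leq \rho(n+\ell-r)$ directly from~\eqref{eq:p_bound_imp}. This is exactly the second term stated in the lemma, so no further manipulation is needed, and combining the two contributions completes the proof.

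The argument is essentially bookkeeping, so I expect no substantive obstacle. The only minor point to watch is the uniformity of the Karamata asymptotic when $\ell$ is small (where $g$ may behave irregularly near $0$ and $\ell g(\ell)$ may be bounded below by a constant only for large $\ell$); this is handled by enlarging the implicit constant to absorb finitely many initial terms, using that $u_0=1$ and $z_{n+\ell}\leq \rho(n)$ already yields the $k=0$ contribution under the trivial bound $\ell g(\ell)\gtrsim 1$ for $\ell$ beyond a threshold.
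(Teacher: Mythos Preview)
Your proof is correct and follows essentially the same approach as the paper: apply the decomposition~\eqref{eq:mu_diff}, bound the first sum via $u_k\leq g(k)$ and Karamata, and bound the second via $z_m\leq\rho(m)$ after the substitution $r=n+\ell-m$. Your use of the monotonicity of $z$ to get $z_{n+\ell-k}\leq z_n\leq\rho(n)$ directly is in fact slightly cleaner than the paper's version, which first disposes of the regime $n\leq 4\ell$ separately (using $\|\mu_n\|_{\TV}=1$) so that it can invoke $\rho(m)\asymp\rho(n)$ for $m\in[n+1,n+\ell]$.
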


\begin{proof}
If $n\leq 4\ell$, then $\rho(n)\gtrsim \rho(\ell)$, so the desired bound follows simply from $\|\mu_n\|_{\TV}=1$.
We thus consider $n> 4\ell$.
By~\eqref{eq:mu_diff},
\begin{equation}\label{eq:mun_diff}
    \| \mu_{n+\ell} - \mu_n \|_{\TV}
    = \sum_{m=n+1}^{n+\ell} u_{n+\ell-m} z_{m}
    + \sum_{m=0}^{n} |u_{n+\ell-m} - u_{n-m}| z_m
    \;.
\end{equation}
By the assumption $u_n \leq g(n)$ and the upper bound in~\eqref{eq:p_bound_imp},
\begin{equation*}
\sum_{m=n+1}^{n+\ell} u_{n+\ell-m} z_{m}
\lesssim
\sum_{m=n+1}^{n+\ell} g(n+\ell-m) \rho(m)
\asymp \ell g(\ell) \rho(n)
\;,
\end{equation*}
where we used $4\ell < n$ and the fact that $g$ is RV with index in $(-1,0)$.

For the second sum in~\eqref{eq:mun_diff}, by the upper bound in~\eqref{eq:p_bound_imp},
\begin{equation*}
\sum_{m=0}^{n} |u_{n+\ell-m} - u_{n-m}| z_m
\leq
\sum_{m=0}^{n} |u_{n+\ell-m} - u_{n-m}| \rho(m)
=
\sum_{r=\ell}^{n+\ell} \rho(n+\ell-r) |u_r - u_{r-\ell}|\;.
\qedhere
\end{equation*}
\end{proof}

\begin{proof}[Proof of~\eqref{eq:some_rate}]
Suppose $\alpha>\frac12$.
We assume that $n\gg \ell$ as otherwise the right-hand side of~\eqref{eq:some_rate} is order $1$.
By~\eqref{eq:un_bound} and Lemma~\ref{lem:mu_diff}, it suffices to bound
\begin{equation*}
\sum_{r=\ell}^{n+\ell} \rho(n+\ell-r) |u_r - u_{r-\ell}|\;.
\end{equation*}
By Lemma~\ref{lem:u_diff}, we can use the following bounds:
\begin{itemize}
    \item $|u_r - u_{r-\ell}| \leq u_r + u_{r-\ell} \lesssim (r-\ell+1)^{-1} \rho(r-\ell)^{-1}$ for $\ell \leq r < 2\ell$,
    \item $|u_r - u_{r-\ell}| \lesssim \rho(r)\ell^{-1}\rho(\ell)^{-2}$ for $2\ell \leq  r$.
\end{itemize} 
The first case contributes, recalling that $n\gg\ell$,
\begin{equation*}
\sum_{\ell \leq r < 2\ell}
\frac{\rho(n+\ell-r)}{(r-\ell+1)\rho(r-\ell)}
\asymp
\frac{\rho(n)}{\rho(\ell)}\;.
\end{equation*}
Note that $\frac{\rho(n)}{\rho(\ell)} \lesssim \frac{n\rho(n)^2}{\ell\rho(\ell)^2}$ since $n\gg\ell$.
The second case contributes,
\begin{equation*}
\sum_{2\ell \leq r \leq n+\ell}
\frac{\rho(n+\ell-r)\rho(r)}{\ell\rho(\ell)}
\lesssim
\frac{n\rho(n)^2 }{\ell\rho(\ell)^{2}}\;.
\end{equation*}
The conclusion follows.
\end{proof}

\begin{rmk}
    Repeating this proof with $\alpha \in (0, 1/2)$ does not yield anything useful: we get
    $\| \mu_{n+\ell} - \mu_n \|_{\TV} \lesssim n \rho(n)^2$ which grows with $n$.
    However, for $\alpha = 1/2$ we obtain
    $\| \mu_{n+\ell} - \mu_n \|_{\TV} \lesssim K(\ell) L(n)^2$, which, depending on $L$, may be small
    for small $\ell$ and large $n$.
\end{rmk}

To finish the proof of~\eqref{eq:some_rate}, it remains to prove Lemma~\ref{lem:u_diff},
for which we require several definitions and lemmas.
Define $P(x) = \sum_{n=1}^\infty p_n e_n(x)$,
where $e_n(x) = e^{\mbi nx}$ and $\mbi=\sqrt{-1}$.
Since $P$ is periodic with period $2\pi$, we will treat it as a function on $[-\pi,\pi)$.

\begin{lemma}
    \label{lem:P1}
    $|P| \leq 1$ with $P(x)=1$ if and only if $x=0$.
\end{lemma}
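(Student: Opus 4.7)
The plan is to handle $|P|\leq 1$ by the triangle inequality and then extract the equality case $P(x)=1$ from the equality case of that inequality. Both directions are essentially forced by elementary arguments; the nontrivial input is only the aperiodicity condition~\eqref{eq:aperiodic}.

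First I would observe the uniform bound $|P(x)|\leq \sum_{n\geq 1} p_n|e_n(x)| = \sum_{n\geq 1} p_n = 1$ and the identity $P(0)=\sum_{n\geq 1} p_n = 1$, which together give $|P|\leq 1$ and the ``if'' direction. For the ``only if'' direction, suppose $P(x)=1$. Rewriting this as $\sum_{n\geq 1} p_n(1-e_n(x))=0$ and taking real parts yields $\sum_{n\geq 1} p_n\bigl(1-\cos(nx)\bigr)=0$. Every summand is non-negative, so each must vanish: $\cos(nx)=1$, i.e.\ $nx\in 2\pi\Z$, for every $n$ with $p_n>0$.

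To conclude, I would invoke~\eqref{eq:aperiodic}: writing $x=2\pi s$, we have $ks\in\Z$ for every $k\in\{1,\dots,N_\#\}$ with $p_k\geq c_\#$. Since the $\gcd$ of this set equals $1$, Bezout's identity gives integers $a_i$ and indices $k_i$ in this set with $\sum_i a_i k_i = 1$, whence $s = \sum_i a_i(k_i s)\in\Z$. Thus $x\in 2\pi\Z$, and on $[-\pi,\pi)$ this forces $x=0$. I do not anticipate any serious obstacle; the one step worth flagging is the real-part trick, which is what converts the (a priori multiplicative) equality $P(x)=1$ into pointwise phase constraints by exploiting the non-negativity of $1-\cos(nx)$.
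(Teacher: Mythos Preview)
Your proof is correct and follows essentially the same approach as the paper's: both use the triangle inequality for $|P|\leq 1$, extract from $P(x)=1$ that $nx\in 2\pi\Z$ whenever $p_n>0$, and then use aperiodicity to force $x=0$. The only cosmetic differences are that you spell out the real-part trick explicitly (the paper just asserts the equality case) and that you finish with B\'ezout's identity whereas the paper uses an equivalent coprime-fraction argument.
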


\begin{proof}
Since $\sum_k p_k=1$ and $p_k \geq 0$ for all $k\geq1$, we have $|P| \leq 1$,
so it remains to show that $P(x)=1$ if and only if $x=0$.

Let $z = x/2\pi\in [-\frac12,\frac12)$. Observe that $P(x) = 1$ if and only if $e^{\mbi m x} = 1$,
or $m z \in \Z$, for each $m$ with $p_m > 0$. By the aperiodicity assumption~\eqref{eq:aperiodic},
there exist $m_1,\ldots,m_k$ such that $\gcd(m_1,\ldots,m_k)=1$ and $p_{m_i}>0$.

Suppose there exists $z \in [-\frac12,\frac12) \setminus \{0\}$ such that $m_i z\in\Z$ for all $i=1,\ldots, k$.
Writing $z= a/b$ with $a,b$ coprime we obtain $z= a/b = c_i/m_i$ for some $c_i\in\Z$, i.e.\ $am_i = bc_i$.
However, if $a,b$ are coprime then $b$ must divide $m_i$ for all $i=1,\ldots, k$, therefore $b=1$ and $z$
is an integer. This is a contradiction.
\end{proof}

For a function $h\colon (0,\infty) \to (0,\infty)$ and $f\colon I \to\C$, where $I\subset \R$, define
\begin{equation*}
|f|_{\CC^h(I)} = \sup_{x\neq y \in I} \frac{|f(x)-f(y)|}{h(|x-y|^{-1})}\;.
\end{equation*}
We will drop $I$ from the subscript whenever $I=[-\pi,\pi)$ and $f$ is $2\pi$-periodic.

\begin{lemma}\label{lem:B_Hol}
Suppose $B = \sum_{m\in\Z} b_me_m$ with
\[
\sum_{|m|\geq n} |b_n| \leq h(n)
\]
where $h$ is non-increasing and regularly varying with index $\beta \in (-1,0]$.
Then $|B|_{\CC^h} \leq C_h$, where $C_h < \infty$ depends only on $h$.
\end{lemma}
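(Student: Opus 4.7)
The plan is to reduce this to a standard Fourier-analytic estimate via a cutoff and summation by parts. First, I would observe that the $m=0$ term cancels in $B(x) - B(y)$, so one may write $B(x) - B(y) = \sum_{m \neq 0} b_m (e^{\mbi m x} - e^{\mbi m y})$. This is a small but essential point, since the hypothesis gives no direct control on $|b_0|$. Setting $d = |x-y|$ (which may be assumed to lie in $(0, 2\pi)$ by $2\pi$-periodicity), I would split the sum at some cutoff $N \geq 1$ to be chosen: for $0 < |m| \leq N$ use $|e^{\mbi m x} - e^{\mbi m y}| \leq |m|\,d$, and for $|m| > N$ use the trivial bound $2$.

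The tail contribution is at most $2\sum_{|m|>N}|b_m|\leq 2h(N+1)$ directly from the hypothesis. For the head contribution, let $T_k = \sum_{|m|=k}|b_m|$ and $S_n = \sum_{k \geq n} T_k$, so $S_n \leq h(n)$. Abel summation gives
\[
    \sum_{0<|m|\leq N} |m|\,|b_m|
    = \sum_{k=1}^N k\,T_k
    = \sum_{k=1}^N S_k - N\,S_{N+1}
    \leq \sum_{k=1}^N h(k)\;,
\]
and since $h$ is regularly varying with index $\beta \in (-1, 0]$, Karamata's theorem (as recorded in Section~\ref{subsec:RV}) yields $\sum_{k=1}^N h(k) \lesssim N\,h(N)$. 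Combining the two pieces gives $|B(x)-B(y)| \lesssim d\,N\,h(N) + h(N+1)$.

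It remains to choose $N$. Taking $N = \max(1, \lfloor 1/d \rfloor)$, one has $d\,N \lesssim 1$, and by regular variation of $h$ one obtains $h(N) \asymp h(N+1) \asymp h(1/d)$ uniformly in $d \in (0, 2\pi)$. The only mildly delicate point is this uniform comparison in the regime where $d$ is of order $1$ (forcing $N=1$); here one uses that $h$ is defined on all of $(0,\infty)$ and non-increasing, so $h(1/d)$ stays comparable to the constant $h(1)$ on the compact range $d \in [1, 2\pi)$. Plugging in gives $|B(x)-B(y)| \lesssim h(|x-y|^{-1})$ with a constant depending only on $h$, which is the claim. There is no substantial obstacle — the argument is a textbook cutoff and the only care needed is uniformity across the different scales of $d$.
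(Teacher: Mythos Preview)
Your proof is correct and follows essentially the same approach as the paper: split the Fourier series at frequency $N \sim |x-y|^{-1}$, bound the low-frequency part by a derivative (Lipschitz) estimate and the high-frequency part by the tail sum directly. The only cosmetic difference is that you bound $\sum_{0<|m|\leq N} |m|\,|b_m|$ via Abel summation and Karamata, whereas the paper uses dyadic blocks $\sum_{2^j\leq |m|<2^{j+1}} |b_m|\leq h(2^j)$ and sums $2^j h(2^j)$; both routes yield $\lesssim N\,h(N)$ for the same reason (regular variation with index $>-1$).
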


\begin{proof}
Fix $\delta \in (0,1)$ and decompose $B=\hat B + \tilde B$ where $\tilde B = \sum_{|k| \geq \delta^{-1}} b_k e_k$.
Then $|\tilde B|_\infty \leq \sum_{|k| \geq \delta^{-1}} |b_k| \leq h(\delta^{-1})$.
On the other hand,
$
|\hat B'|_\infty \leq \sum_{|k| \leq \delta^{-1}} |k b_k|
$.
Since
$
\sum_{2^n \leq |k| \leq 2^{n+1}} |b_k|
\leq h(2^n)
$,
we obtain, uniformly in $\delta\in (0,1)$,
\begin{equation*}
|\hat B'|_\infty \leq \sum_{\substack{0\leq n \leq |\log_2 \delta| \\ 2^n \leq |k| \leq 2^{n+1}}} |kb_k| \leq 2 \sum_{0\leq n \leq |\log_2 \delta|}  2^{n}h(2^n)
\lesssim \delta^{-1} h(\delta^{-1})\;,
\end{equation*}
where we used that $nh(n)$ is regularly varying with index $1+\beta > 0$.
Consequently, for $|x-y| = \delta$,
\[
|B(x)-B(y)| \leq \delta|\hat B'|_\infty + 2|\tilde B|_\infty \lesssim h(\delta^{-1})\;.\qedhere
\]
\end{proof}

\begin{lemma}\label{lem:P_Hol}
$|P|_{\CC^\rho} < \infty$.
\end{lemma}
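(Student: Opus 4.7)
The plan is to reduce the statement directly to Lemma~\ref{lem:B_Hol}. First I would extend the one-sided sequence $(p_n)_{n\geq 1}$ to a two-sided sequence $(b_m)_{m\in\Z}$ by setting $b_m = p_m$ for $m\geq 1$ and $b_m = 0$ otherwise, so that $P = \sum_{m\in\Z} b_m e_m$ matches the form required by Lemma~\ref{lem:B_Hol}.

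Next I would verify the tail-sum hypothesis of that lemma. For $n\geq 1$,
\[
\sum_{|m|\geq n} |b_m| \;=\; \sum_{m\geq n} p_m \;=\; z_{n-1}\;,
\]
and by the upper bound in~\eqref{eq:p_bound_imp} we have $z_{n-1} \leq \rho(n-1)$. Since $\rho$ is monotone and regularly varying with index $-\alpha \in (-1,0)$, there is a constant $C\geq 1$ (depending only on $\rho$) such that $\rho(n-1)\leq C\rho(n)$ for all $n\geq 1$. The case $n=0$ is automatic because $\sum_m |b_m| = \sum_{m\geq 1} p_m = 1 = \rho(0)$. Setting $h = C\rho$, the function $h$ is non-increasing and regularly varying with index $-\alpha \in (-1,0)$, and the sequence $(b_m)$ satisfies the hypothesis $\sum_{|m|\geq n}|b_m|\leq h(n)$ required by Lemma~\ref{lem:B_Hol}.

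Applying Lemma~\ref{lem:B_Hol} then yields $|P|_{\CC^h}\leq C_h < \infty$, and since $h$ differs from $\rho$ only by the multiplicative constant $C$, one immediately gets $|P|_{\CC^\rho} = C\,|P|_{\CC^h} < \infty$, which is the claim.

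There is no real obstacle here: the statement is essentially a packaging of Lemma~\ref{lem:B_Hol}, and the only verification needed is that the tail sum $\sum_{m\geq n}p_m$ is controlled by $\rho(n)$, which is exactly the content of the upper bound in~\eqref{eq:p_bound_imp} together with regular variation of $\rho$.
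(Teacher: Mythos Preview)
Your proof is correct and is exactly the paper's approach: the paper's proof consists of the single sentence ``Follows from Lemma~\ref{lem:B_Hol} and the upper bound in~\eqref{eq:p_bound_imp},'' and you have merely spelled out the routine verification of the tail-sum hypothesis.
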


\begin{proof}
Follows from Lemma~\ref{lem:B_Hol} and the upper bound in~\eqref{eq:p_bound_imp}.
\end{proof}

\begin{lemma}\label{lem:1-P}
    $|1-P(x)| \geq \Re (1-P(x)) \gtrsim \rho(1/|x|)$ uniformly in $x \in [-\pi, \pi)$.
\end{lemma}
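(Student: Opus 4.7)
The first inequality $|1-P(x)| \geq \Re(1-P(x))$ is trivial, so the task reduces to lower bounding $\Re(1-P(x)) = \sum_{n=1}^\infty p_n(1-\cos(nx))$. Using $P(-x)=\overline{P(x)}$ and evenness of $\rho(1/|\cdot|)$ I would reduce to $x \in (0,\pi]$, then handle two regimes separately.

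\textbf{Regime $x \in [\delta,\pi]$ for a small fixed $\delta>0$.} Since $\sum p_n = 1$, the series defining $P$ converges uniformly, so $P$ is continuous, and $\rho(1/x)$ is bounded on this compact set. From $|P(x)|\leq 1$, if $\Re P(x)=1$ then $P(x)$ would be real with $P(x)=1$, which by Lemma~\ref{lem:P1} forces $x=0$. Hence $\Re(1-P(x))>0$ on $[\delta,\pi]$, and compactness gives a positive lower bound, which dominates the bounded $\rho(1/x)$.

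\textbf{Regime $x \in (0,\delta)$.} Set $N = \floor{1/x}$ so that $nx \leq 1 < \pi$ for $n \leq N$. The elementary inequality $1-\cos y \geq 2y^2/\pi^2$ on $[-\pi,\pi]$ gives
\begin{equation*}
\Re(1-P(x)) \geq \sum_{n=1}^N p_n (1-\cos(nx)) \geq \frac{2x^2}{\pi^2}\sum_{n=1}^N n^2 p_n\,.
\end{equation*}
Since $Nx \asymp 1$ and $\rho(N)\asymp \rho(1/x)$, it suffices to establish $\sum_{n=1}^N n^2 p_n \gtrsim N^2 \rho(N)$. I would get this by restricting to $n \in [\roof{N/K},N]$ for a large constant $K$ chosen depending on $c$ and $\alpha$, writing
\begin{equation*}
\sum_{n=1}^N n^2 p_n \geq \roof{N/K}^2 \bigl(z_{\roof{N/K}-1}-z_N\bigr)\,.
\end{equation*}
Regular variation gives $\rho(N/K)/\rho(N)\to K^\alpha$, so for $\delta$ small (hence $N$ large) the lower bound hypothesis yields $z_{\roof{N/K}-1}\geq c\rho(\roof{N/K}-1)\gtrsim cK^\alpha\rho(N)$, while the upper bound hypothesis gives $z_N \leq \rho(N)$. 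Taking $K$ large enough that $cK^\alpha \geq 2$ renders the difference $\gtrsim K^\alpha\rho(N)$, and the full product is $\gtrsim N^2 K^{\alpha-2}\rho(N) \gtrsim N^2\rho(N)$, with implicit constants depending only on $c$ and $\alpha$.

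\textbf{Main obstacle.} The direct Abel-summation identity
$\sum_{n=1}^N n^2 p_n = z_0 + \sum_{m=1}^{N-1}(2m+1)z_m - N^2 z_N$
writes the quantity of interest as a difference of two positive terms which, when the constant $c$ in $c\rho(n)\leq z_n\leq \rho(n)$ is small, are of comparable size, so the naive bound fails. The fix is precisely the restriction to $n$ away from $N$ described above: it exploits the scale factor $K^\alpha$ coming from regular variation of $\rho$ to beat the loss incurred from using the crude upper bound on $z_N$.
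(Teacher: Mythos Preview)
Your proof is correct and follows essentially the same approach as the paper: both arguments split into a compact region away from $0$ (handled via Lemma~\ref{lem:P1} and continuity) and a neighbourhood of $0$, where one uses $1-\cos y \gtrsim y^2$ and then bounds $\sum_{m\leq N} m^2 p_m$ from below by restricting to $m$ in a window $[\eps N,N]$ (your $K=1/\eps$) and invoking the two-sided bound~\eqref{eq:p_bound_imp} together with regular variation of $\rho$ to make $z_{\eps N}-z_N \gtrsim \rho(N)$. One minor point: your implicit constants depend on $c$ and $\rho$ (through its slowly varying part, which governs how large $N$ must be for $\rho(N/K)/\rho(N)$ to be close to $K^\alpha$), not just on $c$ and $\alpha$.
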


\begin{proof}
By Lemma~\ref{lem:P1}, $\Re(1-P(x)) > 0$ if $x \neq 0$, and
it remains to consider the behaviour with $x$ close to $0$.
Since $p_k\geq 0$ and $1-\cos(mx) \gtrsim |mx|^2$ for all $mx\leq \pi$, we obtain
\[
\Re (1-P(x)) = \sum_{m=1}^\infty p_m (1-\cos(mx)) \gtrsim \sum_{m < \pi |x|^{-1}} p_m m^2|x|^2\;.
\]
By our assumptions on $\rho$, there exists $\eps>0$ such that $2\rho(n) \leq c \rho(\eps n)$ for all $n$ sufficiently large
so that $\rho(n)\leq \frac{c}{4}$.
Then, by the two-sided bound~\eqref{eq:p_bound_imp},
\begin{equation*}
\sum_{\eps n \leq m < n} p_m \geq c \rho(\eps n) - \rho(n) \geq \rho(n)\;,
\end{equation*}
and thus
\begin{equation*}
\sum_{\eps n \leq m < n} p_m m^2 \geq \eps^2 n^2 \rho(n)\;.
\end{equation*}
Therefore, for $x$ sufficiently small,
\begin{equation*}
\Re (1-P(x))
\gtrsim |x|^2 \sum_{m < \pi|x|^{-1}} p_m m^2
\gtrsim \rho(1/|x|)\;.\qedhere
\end{equation*}
\end{proof}

For a function $f\in L^1[-\pi,\pi)$, extended periodically to $\R$, and
integer $|n|>1$, note that $\int_{-\pi}^\pi f(x+\pi/|n|) e^{\mbi nx} \,dx = -\int_{-\pi}^\pi f(x) e^{\mbi nx} \,dx$,
and thus
\begin{equation}\label{eq:Fourier_int}
\Big| \int_{-\pi}^\pi f(x)e^{\mbi n x} \,dx\Big|
=
\frac12\Big| \int_{-\pi}^\pi \{ f(x)-f(x+\pi/|n|)\}e^{\mbi n x} \,dx\Big|\;.
\end{equation}
For $\delta>0$, define the $L^1$ modulus of continuity by
$\omega_\delta (f) = \int_{-\pi}^\pi |f(x + \delta) - f(x)| \, dx$.
\begin{lemma}\label{lem:int_f_modulus}
$| \int_{-\pi}^\pi f(x)e^{\mbi n x} \,dx | \leq \frac12\omega_{\pi/|n|}(f)$ for all $|n|\geq 1$.
\end{lemma}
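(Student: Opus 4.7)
The plan is to apply the shift identity~\eqref{eq:Fourier_int} that has just been derived, which already contains the essential trick. That identity rewrites the Fourier coefficient $\int_{-\pi}^\pi f(x) e^{\mbi n x}\,dx$ as half of an integral whose integrand involves the difference $f(x) - f(x+\pi/|n|)$ in place of $f(x)$.

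From there, all that remains is a one-line application of the triangle inequality. Since $|e^{\mbi n x}| = 1$, I would write
\begin{equation*}
\Big|\int_{-\pi}^\pi f(x) e^{\mbi n x}\,dx\Big|
= \frac12 \Big|\int_{-\pi}^\pi \{f(x)-f(x+\pi/|n|)\}\, e^{\mbi n x}\,dx\Big|
\leq \frac12 \int_{-\pi}^\pi |f(x)-f(x+\pi/|n|)|\,dx.
\end{equation*}
The right-hand integral equals $\omega_{\pi/|n|}(f)$ by definition, noting that $|f(x)-f(x+\delta)| = |f(x+\delta)-f(x)|$ and that integration is performed over a full period of $f$, so the sign of the shift is immaterial.

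There is essentially no obstacle here: the substance of the argument is already contained in the shift-by-$\pi/|n|$ trick recorded in~\eqref{eq:Fourier_int}, and what remains is merely the triangle inequality together with unpacking the definition of $\omega_\delta(f)$.
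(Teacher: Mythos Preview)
Your proposal is correct and matches the paper's approach exactly: the paper's proof reads in full ``Follows immediately from~\eqref{eq:Fourier_int},'' and you have simply spelled out that immediate consequence via the triangle inequality and the definition of $\omega_\delta(f)$.
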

\begin{proof}
Follows immediately from~\eqref{eq:Fourier_int}.
\end{proof}

\begin{lemma}\label{lem:int_over_I}
Let $n\in\Z \setminus \{0\}$ and $\delta = \frac{2\pi}{|n|}$.
Let $I$ be an interval of length $|I|>\delta$.
Then, for $h\colon (0,\infty)\to(0,\infty)$ non-increasing,
\begin{equation*}
\Big|\int_{I} f(x) e^{\mbi n x} \,dx\Big|
\leq 2 \delta |f|_{\infty;I} + h(\delta^{-1}) \int_I |f|_{\CC^h[x-\delta,x+\delta]}\,dx\;.
\end{equation*}
\end{lemma}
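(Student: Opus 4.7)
The key observation is that $\delta = 2\pi/|n|$ is exactly a period of $x\mapsto e^{\mathbf{i} n x}$, so that $\int_J e^{\mathbf{i} n x}\,dx=0$ whenever $J$ is an interval of length $\delta$. The plan is to tile $I$ with intervals of length $\delta$, estimate each piece using an oscillation-kills-constants trick, and handle the leftover as a boundary term.

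Concretely, write $I=[a,b]$ and let $k=\lfloor(b-a)/\delta\rfloor$. Partition $I$ into the adjacent intervals $J_r = [a+r\delta,\, a+(r+1)\delta]$, $r=0,\ldots,k-1$, and the remainder $J^* = [a+k\delta,b]$, whose length is at most $\delta$. The remainder is handled trivially: since $|e^{\mathbf{i} n x}|\leq 1$,
\[
\Bigl|\int_{J^*} f(x) e^{\mathbf{i} n x}\,dx\Bigr| \leq |J^*|\,|f|_{\infty;I} \leq \delta\,|f|_{\infty;I}.
\]

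For each full piece $J_r$, fix any $x_r^* \in J_r$ (e.g.\ the midpoint). Since $\int_{J_r}e^{\mathbf{i} n x}\,dx=0$, we may subtract the constant $f(x_r^*)$ without changing the integral:
\[
\int_{J_r} f(x) e^{\mathbf{i} n x}\,dx
= \int_{J_r} \bigl(f(x)-f(x_r^*)\bigr) e^{\mathbf{i} n x}\,dx.
\]
For every $x\in J_r$ one has $|x-x_r^*|\leq \delta$, hence $|x-x_r^*|^{-1}\geq \delta^{-1}$, and since $h$ is non-increasing,
\[
|f(x)-f(x_r^*)| \leq h(|x-x_r^*|^{-1})\,|f|_{\CC^h(J_r)} \leq h(\delta^{-1})\,|f|_{\CC^h(J_r)}.
\]
Integrating gives $\bigl|\int_{J_r} f(x)e^{\mathbf{i} n x}\,dx\bigr|\leq \delta\, h(\delta^{-1})\,|f|_{\CC^h(J_r)}$.

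To compare with the integral on the right-hand side of the claim, note that for any $x\in J_r$ the whole chunk $J_r$ is contained in $[x-\delta,x+\delta]$ (since $|J_r|=\delta$), so $|f|_{\CC^h(J_r)}\leq |f|_{\CC^h[x-\delta,x+\delta]}$, and integrating this over $x\in J_r$ yields $\delta\,|f|_{\CC^h(J_r)}\leq \int_{J_r} |f|_{\CC^h[x-\delta,x+\delta]}\,dx$. Summing over $r$ gives
\[
\sum_{r=0}^{k-1}\delta\,h(\delta^{-1})|f|_{\CC^h(J_r)}
\leq h(\delta^{-1})\int_{I}|f|_{\CC^h[x-\delta,x+\delta]}\,dx.
\]
Combining this with the remainder estimate produces a bound of the form $\delta|f|_{\infty;I}+h(\delta^{-1})\int_I |f|_{\CC^h[x-\delta,x+\delta]}\,dx$, which is stronger than the stated inequality (the factor $2\delta$ provides slack).

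The only mildly delicate step is the comparison of the local Hölder seminorms $|f|_{\CC^h(J_r)}$ with the pointwise quantity $|f|_{\CC^h[x-\delta,x+\delta]}$ that appears in the statement; all other steps are direct consequences of the periodicity of $e^{\mathbf{i} n x}$ and the monotonicity of $h$.
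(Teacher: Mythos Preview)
Your proof is correct and follows essentially the same approach as the paper's: tile $I$ by intervals of length $\delta$, use $\int e^{\mbi n x}\,dx=0$ on each full piece to subtract a constant, bound the difference via the local $\CC^h$-seminorm, and handle the leftover trivially. The only difference is that the paper uses the fixed grid $I_k=[k\delta,(k+1)\delta)$, which can leave a remainder on \emph{both} ends of $I$ and accounts for the factor $2\delta$; your choice to anchor the partition at the left endpoint of $I$ leaves at most one remainder piece and yields the slightly sharper constant $\delta$.
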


\begin{proof}
We subdivide $[-\pi,\pi)$ into intervals $I_k = [k\delta,(k+1)\delta)$.
If $I$ does not contain any $I_k$, then clearly $|I|\leq 2 \delta$ and $|\int_{I} f(x) e^{\mbi n x} \,dx|
\leq 2 \delta |f|_{\infty;I}$.

Suppose now that $I$ contains $I_{k},\ldots,I_{m}$.
The integral of $|f(x)|$ over $I\setminus I_{k}\cup \ldots\cup I_{m}$
is also bounded by $2 \delta |f|_{\infty;I}$, so it suffices to bound
$\sum_{j=k}^m|\int_{I_j} f(x)e^{\mbi nx}\,dx|$.

Note that $\int_{I_j} e^{\mbi nx}\,dx = 0$.
Therefore, since $h$ is non-increasing, $|\int_{I_j} f(x)e^{\mbi nx}\,dx| \leq \delta h(\delta^{-1}) |f|_{\CC^h[y_j-\delta,y_j+\delta]}$
for any point $y_j\in I_j$.
There exist $y_j \in I_j$ such that $\delta |f|_{\CC^h[y_j-\delta,y_j+\delta]} \leq \int_{I_j} |f|_{\CC^h[x-\delta,x+\delta]}\,dx$
and therefore $\sum_{j=k}^m|\int_{I_j} f(x)e^{\mbi nx}\,dx|\leq h(\delta^{-1}) \sum_{j=k}^m \int_{I_j} |f|_{\CC^h[x-\delta,x+\delta]}\,dx\leq h(\delta^{-1}) \int_I |f|_{\CC^h[x-\delta,x+\delta]}\,dx$.
\end{proof}

\begin{proof}[Proof of Lemma~\ref{lem:u_diff}]
Denote $U=1/(1-P)$, so that, by~\eqref{eq:pu}, we have $U = \sum_{n=0}^\infty u_n e_n$ and
$u_n = \frac{1}{2 \pi} \int_{-\pi}^\pi U(x)e^{- \mbi n x} \,dx$.
We first prove~\eqref{eq:un_bound}.
By Lemma~\ref{lem:int_f_modulus} it suffices to prove that, uniformly in $\delta \in (0,1)$,
\begin{equation}\label{eq:U_mod}
\omega_\delta(U)
\lesssim
\rho(1/\delta)\times 
\begin{cases}
1 \quad &\text{if } \alpha \in (0,\frac12)\;,\\
K (1/\delta) \quad &\text{if } \alpha = 1/2\;,\\
\delta \rho(1/\delta)^{-2} \quad &\text{if } \alpha \in (\frac12,1)\;.
\end{cases}
\end{equation}
To this end, by Lemma~\ref{lem:1-P},
\begin{equation}\label{eq:U_x_bound}
|U(x)| = |1-P(x)|^{-1}\lesssim \rho(1/|x|)^{-1}\;,
\end{equation}
and thus, since $\alpha<1$,
\begin{equation}\label{eq:U_int_delta}
\int_{-2\delta}^{2\delta} |U(x+\delta)-U(x)| \,dx \lesssim \delta \rho(1/\delta)^{-1}\;,
\end{equation}
which is smaller than the right-hand side of~\eqref{eq:U_mod} by~\eqref{eq:KL2}.

For non-zero $x \in [-\pi,\pi)$, we claim that
\begin{equation}\label{eq:U_C_alpha}
|U|_{\CC^\rho([\frac12x,\frac32x])} \lesssim \rho(1/|x|)^{-2}\;.
\end{equation}
Indeed, note that $|f\circ g|_{\CC^\rho} \leq |f'|_{\infty}|g|_{\CC^\rho}$.
We apply this with $g\colon [\frac12x,\frac32x] \to \C$ defined by $g=(1-P)\restriction_{[\frac12x,\frac32x]}$ and $f(x)=1/x$, for which $f\circ g = U\restriction_{[\frac12x,\frac32x]}$.
Note that $|g| \gtrsim \rho (1/|x|)$ by Lemma~\ref{lem:1-P} and so $|f'|_{\infty;g[\frac12x,\frac32x]} \lesssim \rho (1/|x|)^{-2}$.
We thus obtain~\eqref{eq:U_C_alpha} since $g\in \CC^\rho$ by Lemma~\ref{lem:P_Hol}.

It follows that, for $|x|>2\delta$,
\begin{equation}\label{eq:U_delta}
|U(x)-U(x+\delta)| \lesssim \rho(1/\delta) \rho(1/|x|)^{-2}\;.
\end{equation}
Therefore
\begin{equation*}
\int_{|x| > 2\delta} |U(x+\delta)-U(x)|\,dx
\lesssim 
\rho (1/\delta) \int_{2\delta}^{\pi} \rho(1/|x|)^{-2} \,dx\;,
\end{equation*}
which, by Karamata's theorem, is bounded from above by the right-hand side of~\eqref{eq:U_mod}.
This completes the proof of~\eqref{eq:U_mod} and thus of~\eqref{eq:un_bound}.

We now prove~\eqref{eq:u_diff}.
Let $\ell\geq 1$.
For $2\ell \leq n \lesssim \ell$,~\eqref{eq:u_diff} follows simply from~\eqref{eq:un_bound} and the triangle inequality.
Suppose henceforth that $n\gg \ell$
and denote $\delta = \pi/(n-\ell) \asymp n^{-1}$.

Denote $F=1-e_{-\ell}$ and $H=FU$.
Observe also that $|F'| \leq \ell$, $F(0)=0$ and $|F|\leq 2$.
Furthermore, for all  $n\geq \ell$, using~\eqref{eq:Fourier_int},
\begin{equation*}
|u_{n-\ell}-u_{n}| = \frac{1}{2\pi}
\Big|\int_{-\pi}^\pi H e_{-n+\ell}\Big|
=
\frac{1}{4\pi}
\Big| \int_{-\pi}^\pi \{ H(x)-H(x+\delta )\}e^{-\mbi (n-\ell) x} \,dx
\Big|\;.
\end{equation*}
By~\eqref{eq:U_x_bound}, $|U(x)| \lesssim \rho(1/|x|)^{-1}$, hence $H(x)\lesssim \ell |x| \rho(1/|x|)^{-1}$,
and thus by triangle inequality
\begin{equation*}
\int_{-2\delta}^{2\delta} |H(x+\delta) - H(x)| \,dx
\lesssim
\int_0^{2\delta} \ell x \rho(1/x)^{-1}\,dx
\lesssim
\frac{\ell \delta^{2}}{\rho(1/\delta)}\;.
\end{equation*}
Remark that $\frac{\ell \delta^{2}}{\rho(1/\delta)}$ is bounded by a multiple of the right-hand side of~\eqref{eq:u_diff}
(for the case $\alpha=\frac12$, this uses $L^{-2}\lesssim K$).

We now consider $|x|>2\delta$ and write
\begin{equation}\label{eq:H}
H(x+\delta)-H(x) = \bigl(F(x+\delta)-F(x)\bigr)U(x) + F(x+\delta)\bigl(U(x+\delta) - U(x)\bigr)\;.
\end{equation}
Using~\eqref{eq:U_delta}, the absolute value of the second term in~\eqref{eq:H} is bounded by a multiple of
$
(\ell |x| \wedge 1) \rho(1/\delta) \rho(1/|x|)^{-2}
$,
which, integrating over $|x|>2\delta$, contributes
\begin{equation*}
\int_{2\delta}^{1/\ell} \ell x \rho(1/\delta) \rho(1/x)^{-2}\,dx
\leq 
\ell\rho(1/\delta) \int_{0}^{1/\ell}  x  \rho(1/x)^{-2}\,dx
\lesssim
\frac{\rho(1/\delta)}{\ell \rho(\ell)^{2}}
\end{equation*}
plus
\begin{equation}\label{eq:2nd_part}
\int_{1/\ell}^{\pi} \rho(1/\delta) \rho(1/x)^{-2}\,dx
\lesssim
\rho(1/\delta)
\times
\begin{cases}
1 \quad &\text{if } \alpha \in (0,\frac12)\;,\\
K (\ell) \quad &\text{if } \alpha = 1/2\;,\\
\ell^{-1} \rho(\ell)^{-2} \quad &\text{if } \alpha \in (\frac12,1)\;.
\end{cases}
\end{equation}
Remark that $\frac{\rho(1/\delta)}{\ell \rho(\ell)^{2}}$ is bounded by a multiple of the right-hand of~\eqref{eq:u_diff},
while~\eqref{eq:2nd_part} is precisely of this order.

For the first term in~\eqref{eq:H}, we divide the integration domain into $|x|\in [2\delta,z]$ and $|x|\in [z,\pi]$ for some $z\geq 2\delta$ to be determined.
For the first domain, we take absolute values and obtain, by~\eqref{eq:U_x_bound},
\begin{equation}\label{eq:F_diff_U_1}
\int_{|x|\in [2\delta,z]} \bigl|\bigl(F(x+\delta)-F(x)\bigr)U(x) \bigr|\,dx
\lesssim
\int_{2\delta}^z \frac{\ell\delta}{\rho(x^{-1})}\,dx \lesssim \frac{\ell\delta z}{ \rho(z^{-1})}\;.
\end{equation}
For the second domain, we use instead
\begin{equation}\label{eq:F_diff_U_2}
\begin{split}
\Big|\int_{|x|>z} \bigl( F(x+\delta) & -F(x) \bigr) U(x) e^{-\mbi (n-\ell) x}\,dx
\Big|
=
\Big|\int_{|x|>z} e_{-\ell}(x) \bigl( e_{-\ell}(\delta) - 1\bigr) U(x) e^{-\mbi (n-\ell) x}\,dx
\Big|
\\
&\leq
2\Big|\int_{|x|>z} U(x) e^{-\mbi n x} \, dx
\Big|
\lesssim
\frac{\delta}{\rho(z^{-1})}
+
\int_{z}^\pi \frac{\rho(\delta^{-1})}{\rho(x^{-1})^2} \, dx
\\
&\lesssim
\rho(\delta^{-1})\times
\begin{cases}
1 \quad &\text{if } \alpha \in (0,\frac12)\;,\\
K (1/z) \quad &\text{if } \alpha = 1/2\;,\\
z\rho(z^{-1})^{-2} \quad &\text{if } \alpha \in (\frac12,1)\;.
\end{cases}
\end{split}
\end{equation}
In the 2nd line above we used Lemma~\ref{lem:int_over_I}
and the fact that $|U(x)| \lesssim \rho(z^{-1})^{-1}$ for $|x|>z$
and $|U|_{\CC^\rho[x-\delta,x+\delta]} \lesssim \rho(|x|^{-1})^{-2}$ by~\eqref{eq:U_C_alpha}.
In the 3rd line we used that $z\geq 2\delta$, that $1/\rho^2$ is RV with index $2\alpha$,
and, for the case $\alpha=\frac12$, the bound~\eqref{eq:KL2}.

For $\alpha<\frac12$ we take $z=2\delta$ (so there is no first domain) and obtain a bound of order $\rho(\delta^{-1})$.
Alternatively, for $\alpha < \frac{1}{2}$ the bound in~\eqref{eq:u_diff} follows directly from~\eqref{eq:un_bound}.

For $\alpha=\frac12$ we choose $z^{-1}=\ell$, so the second domain contributes $\rho(\delta^{-1})K(\ell)$
and the first contributes $\delta/\rho(\ell) \lesssim \rho(\delta^{-1})K(\ell)$,
where the final bound is due to~\eqref{eq:KL2} and $\delta^{-1}\gg \ell$.

For $\alpha > \frac12$ we also choose $z^{-1} = \ell$.
From $\delta^{-1} \asymp n \gtrsim \ell$ we have $\delta^{-1} \rho(\delta^{-1}) \gtrsim \ell \rho(\ell)$
and thus the first domain contributes 
\[
    \frac{\ell\delta z}{ \rho(z^{-1})}
    = \frac{\delta}{\rho(\ell)}
    \lesssim \frac{\rho(\delta^{-1})}{\ell \rho(\ell)^{2}}
    \;.
\]
The second domain contributes
\[
    \frac{\rho(\delta^{-1})}{z^{-1} \rho(z^{-1})^2}
    =
    \frac{\rho(\delta^{-1})}{\ell \rho(\ell)^2}
    \;,
\]
as required.
\end{proof}

\subsection{Proof of~(\ref{eq:mu_reg})}

In addition to~\eqref{eq:p_bound_imp} and~\eqref{eq:aperiodic}, we now assume the regularity bound~\eqref{eq:p_reg}.
We first strengthen the bounds in Lemma~\ref{lem:u_diff} under these assumptions.

\begin{lemma}\label{lem:u_diff_reg}
Uniformly in $n\geq 1$,
\begin{equation}\label{eq:un_bound_imp}
u_n\lesssim
\frac{1}{n\rho(n)}\;.
\end{equation}
Furthermore, uniformly in $\ell\geq 1$ and $n\geq 2\ell$,
\begin{equation}\label{eq:u_diff_imp}
|u_n - u_{n-\ell}|
\lesssim
\begin{cases}
    n^{-1}\rho(n) \rho(\ell)^{-2} &  \alpha < 1/2\;, \\
    n^{-1} \rho(n) \ell \{K(n) - K(\ell)\} &  \alpha = 1/2\;, \\
    n^{-2} \rho(n)^{-1} \ell     &  \alpha > 1/2\;.
\end{cases}
\end{equation}
\end{lemma}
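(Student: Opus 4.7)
The plan is to (i) derive an improved modulus-of-continuity estimate for $P$, and hence for $U$, using the extra hypothesis \eqref{eq:p_reg}, and then (ii) feed this into the Fourier-analytic machinery set up in the proof of Lemma~\ref{lem:u_diff}. For (i), I would start from the telescoping identity
\[
P(x+\delta) - P(x) = (e^{\mbi\delta}-1)\sum_{j\geq 0} e^{\mbi j\delta} R_j(x), \qquad R_j(x) = \sum_{k>j} p_k\, e^{\mbi kx},
\]
and apply Abel summation to each $R_j(x)$ using $|T_k(x)|\lesssim 1/|x|$ together with \eqref{eq:p_reg} to control $\sum_{k>j}|p_k - p_{k+1}|$. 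This gives $|R_j(x)| \lesssim \min(\rho(j),\, j^{-1}\rho(j)/|x|)$, and summing with the crossover at $j\asymp 1/|x|$ yields $\sum_j |R_j(x)| \lesssim \rho(1/|x|)/|x|$, whence
\[
|P(x+\delta) - P(x)| \lesssim \delta\,\rho(1/|x|)/|x| \qquad \text{for } |x|\gg\delta.
\]
This strictly improves $|P|_{\CC^\rho}<\infty$ of Lemma~\ref{lem:P_Hol}, and dividing by $|1-P|^2\asymp \rho(1/|x|)^2$ (Lemma~\ref{lem:1-P}) gives the key refined bound $|U(x+\delta)-U(x)|\lesssim \delta/(|x|\rho(1/|x|))$ for $|x|\gg\delta$.

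For \eqref{eq:un_bound_imp}, I would integrate this modulus on $[-\pi,\pi]$: on $|x|\leq 2\delta$ the pointwise bound $|U|\lesssim 1/\rho(1/|\cdot|)$ and Karamata give $\int|U|\,dx\lesssim\delta/\rho(1/\delta)$; on $|x|>2\delta$, Karamata applied to the refined increment bound gives $\lesssim \delta$, since $x^{-\alpha}L(1/x)^{-1}$ is integrable on $(0,\pi)$ for $\alpha\in(0,1)$. Hence $\omega_\delta(U)\lesssim \delta/\rho(1/\delta)$, and Lemma~\ref{lem:int_f_modulus} delivers $u_n\lesssim 1/(n\rho(n))$.

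For \eqref{eq:u_diff_imp}, I would mirror the proof of Lemma~\ref{lem:u_diff}. With $H=FU$, $F=1-e_{-\ell}$, $\delta=\pi/(n-\ell)$, the identity $u_{n-\ell}-u_n = \tfrac{1}{4\pi}\int (H(x)-H(x+\delta))e^{-\mbi(n-\ell)x}\,dx$ is split into the \emph{F-diff} piece $(F(x+\delta)-F(x))U(x)$ and the \emph{U-diff} piece $F(x+\delta)(U(x+\delta)-U(x))$. The F-diff piece factors as $c\,e^{-\mbi\ell x}U(x)$ with $|c|\lesssim \ell/n$, contributing $\tfrac{1}{2}|c|\,|u_n|\lesssim \ell/(n^2\rho(n))$ by \eqref{eq:un_bound_imp}, a bound within all three cases of \eqref{eq:u_diff_imp}. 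The U-diff piece is split according to $|x|$: on $|x|\leq 2\delta$ direct integration yields $\lesssim \ell/(n^2\rho(n))$; on $|x|\in[2\delta,1/\ell]$, combining $|F(x+\delta)|\lesssim \ell|x|$ with the refined $U$-increment bound and Karamata gives a contribution of order $\delta/\rho(\ell)\asymp 1/(n\rho(\ell))$; on $|x|\in[1/\ell,\pi]$, where $|F(x+\delta)|\lesssim 1$, I would exploit the rapid oscillation $e^{-\mbi(n-\ell)x}$ via Lemma~\ref{lem:int_over_I}. A case analysis in $\alpha$ then recovers the three targeted bounds, with the slowly varying factor $K(n)-K(\ell)$ in the $\alpha=1/2$ case arising from the logarithmically divergent Karamata integral $\int 1/(x\rho(1/x))\,dx$.

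The hard part is $\alpha<1/2$ with $\ell$ much smaller than $n$: the intermediate contribution $1/(n\rho(\ell))$ from direct absolute integration on $[2\delta,1/\ell]$ exceeds the target $\rho(n)/(n\rho(\ell)^2)$. Sharpening this requires a more aggressive extraction of cancellation from $e^{-\mbi(n-\ell)x}$ than in the proof of Lemma~\ref{lem:u_diff}. One route is to apply Lemma~\ref{lem:int_over_I} with an $x$-dependent effective modulus $h$ matched to the refined $U$-increment bound from step (i); another is to integrate by parts in $x$ after controlling the modulus of $U'$ via a further iteration of the Abel-summation argument. Carrying this case through without strengthening \eqref{eq:p_reg} is the technical heart of the proof.
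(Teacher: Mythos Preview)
Your derivation of the refined increment bound $|U(x+\delta)-U(x)|\lesssim \delta/(|x|\rho(1/|x|))$ via Abel summation is correct and is essentially equivalent to the paper's bound $|U'(x)|\lesssim |x|^{-1}\rho(|x|^{-1})^{-1}$ obtained from $|P'(x)|\lesssim |x|^{-1}\rho(|x|^{-1})$ (their~\eqref{eq:P_deriv} and~\eqref{eq:U'}). Your proof of~\eqref{eq:un_bound_imp} is therefore fine, modulo a slip in the last integral: $\int_{2\delta}^\pi \delta/(x\rho(1/x))\,dx$ is \emph{not} $\lesssim\delta$ (the integrand behaves like $x^{-1-\alpha}$ near $0$), but rather $\lesssim \delta/\rho(1/\delta)$, which is still the correct target.

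The real gap is in~\eqref{eq:u_diff_imp}. Your U-diff contribution on $|x|\in[2\delta,1/\ell]$, bounded in absolute value by $\delta/\rho(\ell)\asymp 1/(n\rho(\ell))$, exceeds the target for \emph{every} $\alpha\in(0,1)$, not only $\alpha<1/2$. With $\rho(t)=t^{-\alpha}$ one has
\[
\frac{1/(n\rho(\ell))}{\text{target}}
=\begin{cases}
(n/\ell)^{\alpha} & \alpha<1/2\;,\\
(n/\ell)^{1/2}/\log(n/\ell) & \alpha=1/2\;,\\
(n/\ell)^{1-\alpha} & \alpha>1/2\;,
\end{cases}
\]
all of which diverge as $n/\ell\to\infty$. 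Since you take absolute values on this interval, nothing on $[1/\ell,\pi]$ can repair the damage. Thus the ``case analysis in $\alpha$'' you announce cannot close, and the difficulty you flag at the end is not confined to $\alpha<1/2$ but is the heart of the matter throughout.

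What is missing is one more order of regularity. The paper does not bound $\int(H(x)-H(x+\delta))e^{-\mbi(n-\ell)x}\,dx$ directly; it first integrates by parts to get $(n-\ell)(u_{n-\ell}-u_n)=\tfrac{1}{2\pi}\int H' e_{-n+\ell}$ and then estimates the $L^1$ modulus of continuity of $H'$. For this one needs not just the pointwise bound on $P'$ but also its local H\"older regularity, $|P'|_{\CC^\rho([x/2,3x/2])}\lesssim |x|^{-1}$ (their~\eqref{eq:P_Hol}), which they extract from~\eqref{eq:p_reg} via Lemma~\ref{lem:blow_up} applied to $a_m=\mbi m p_m$. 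Your closing suggestion ``integrate by parts in $x$ after controlling the modulus of $U'$'' is exactly this idea, but it must be implemented from the outset for all $\alpha$, and it requires the second-order input on $P'$, not merely a refined modulus of $P$. Your clean trick for the F-diff piece (recognising it as a multiple of $\int Ue_{-n}$) is nice and slightly different from the paper's handling of the corresponding term $A_2$, but it does not compensate for the missing order on the U-diff side.
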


 \begin{rmk}
 If $\alpha>\frac{1}{2}$, then~\eqref{eq:u_diff_imp} for $\ell\geq 1$ follows from
 the case $\ell=1$, i.e.\ $|u_{n}-u_{n-1}| \lesssim n^{-2}\rho(n)^{-1}$, and the triangle inequality.
  Note also that $|u_{n}-u_{n-1}| \lesssim n^{-2}\rho(n)^{-1}$ and $u_n \to 0$ imply the bound~\eqref{eq:un_bound_imp}.
  None of such implications hold in Lemma~\ref{lem:u_diff} or for the cases $\alpha \leq \frac12$.
\end{rmk}

\begin{rmk}
For $n\geq 2\ell$,
\begin{equation}
    \label{eq:KnKell}
    K(n)-K(\ell)
    \gtrsim \int_{\ell}^{2 \ell} \frac{1}{t L(t)^2} \, dt
    + \int_{n / 2}^{n} \frac{1}{t L(t)^2} \, dt
    \gtrsim L(\ell)^{-2} + L(n)^{-2}
    \;.
\end{equation}
Therefore, if $\alpha=\frac12$, then $n^{-1}\rho(n) \rho(\ell)^{-2} + n^{-2} \rho(n)^{-1} \ell \lesssim n^{-1} \rho(n) \ell \{K(n) - K(\ell)\}$.
That is, the bound in~\eqref{eq:u_diff_imp} is compatible with those for $\alpha \neq \frac12$, and is possibly weaker.
\end{rmk}

Using Lemma~\ref{lem:u_diff_reg}, whose proof we postpone, we can complete the proof of Theorem~\ref{thm:general}.

\begin{proof}[Proof of Theorem~\ref{thm:general}]
We already proved~\eqref{eq:some_rate}.
For $n\lesssim \ell$, the bound trivially follows from $\|\mu_k\|_{\TV} \leq 1$,
so we suppose henceforth that $n\gg\ell$.
By~\eqref{eq:un_bound_imp} and Lemma~\ref{lem:mu_diff}, we have
\begin{equation*}
\| \mu_{n+\ell} - \mu_n \|_{\TV} \lesssim
\rho(\ell)^{-1} \rho(n)
+
\sum_{r=\ell}^{n+\ell} \rho(n+\ell-r) |u_r - u_{r-\ell}|\;.
\end{equation*}
Since $\rho(\ell)^{-1} \rho(n)$ is already the desired bound, it remains to bound the sum over $r$.

Suppose first that $\alpha>\frac12$.
By Lemma~\ref{lem:u_diff_reg}, we use the following bounds:
\begin{itemize}
    \item $|u_r - u_{r-\ell}| \lesssim (r-\ell+1)^{-1}\rho(r-\ell)^{-1}$ for $\ell \leq r <2\ell$,
    \item $|u_r - u_{r-\ell}| \lesssim r^{-2} \rho(r)^{-1}\ell$ for $2\ell\leq r$.
\end{itemize}
The first case contributes, since $n\gg\ell$,
\begin{equation*}
\sum_{\ell \leq r < 2\ell} \frac{\rho(n+\ell-r)}{(r-\ell+1)\rho(r-\ell)}
\asymp
\frac{\rho(n)}{\rho(\ell)}
\end{equation*}
where we used that $y^{-1}\rho(y)^{-1}$ is RV with index $-1+\alpha>-1$.
The second case contributes
\begin{equation*}
\sum_{2\ell \leq r \leq n+\ell }
\frac{\ell \rho(n+\ell-r)}{r^{2} \rho(r)}
\asymp
\frac{\ell}{n} + \frac{\rho(n)}{\rho(\ell)}
\asymp
\frac{\rho(n)}{\rho(\ell)}
\end{equation*}
where the first part of the middle term is due to $n/2 \leq r \leq n + \ell$
and the second is due to $\ell \leq r < n/2$.

Suppose now $\alpha < \frac12$.
By Lemma~\ref{lem:u_diff_reg}, we use the following bounds:
\begin{itemize}
    \item $|u_r - u_{r-\ell}| \lesssim (r-\ell+1)^{-1}\rho(r-\ell)^{-1}$ for $\ell \leq r <2\ell$,
    \item $|u_r - u_{r-\ell}| \lesssim r^{-1} \rho(r)\rho(\ell)^{-2}$ for $2\ell\leq r$.
\end{itemize}
The first case contributes $\rho(\ell)^{-1}\rho(n)$ as before.
The second case contributes
\begin{equation*}
\sum_{2\ell \leq r \leq n+\ell }
\frac{\rho(n+\ell-r)\rho(r)}{r \rho(\ell)^2}
\asymp
\frac{\rho(n)^2}{\rho(\ell)^2} + \frac{\rho(n)}{\rho(\ell)}
\asymp
\frac{\rho(n)}{\rho(\ell)}
\end{equation*}
where again the first part of the middle term is due to $n/2 \leq r \leq n + \ell$
and the second is due to $\ell \leq r < n/2$.

Finally, suppose $\alpha=\frac12$.
By Lemma~\ref{lem:u_diff_reg}, we use the following bounds:
\begin{itemize}
    \item $|u_r - u_{r-\ell}| \lesssim (r-\ell+1)^{-1}\rho(r-\ell)^{-1}$ for $\ell \leq r <2\ell$,
    \item $|u_r - u_{r-\ell}| \lesssim r^{-1} \rho(r)\ell \{K(r)-K(\ell)\}$ for $2\ell\leq r$.
\end{itemize}
The first case contributes $\rho(\ell)^{-1}\rho(n)$ as before.
The second case contributes
\begin{equation}\label{eq:final_sum}
\sum_{2\ell \leq r \leq n+\ell }
\frac{\rho(n+\ell-r)\rho(r)\ell \{K(r)-K(\ell)\}}{r}\;.
\end{equation}
We claim that $\eqref{eq:final_sum}\lesssim\frac{\rho(n)}{\rho(\ell)}$.
To prove the claim, let $C = 2^{8}$ and take $y_0$
so that $\frac{L(x)}{L(y)} \in [\frac12,2]$ for all $y > y_0$ and $x\in [y,Cy]$.

Since $n \gg \ell$, we restrict to $n > C \ell$.
We also restrict to $\ell > y_0$, because $\sum_{r= 1}^n \frac{\rho(n-r)\rho(r) K(r)}{r} \lesssim \rho(n)$.

Then, for $t\in [C^i\ell,C^{i+1}\ell]$, we have
$\frac{L(\ell)}{L(t)} = \frac{L(\ell)}{L(C\ell)}\cdots \frac{L(C^i\ell)}{L(t)} \in [2^{-i-1}, 2^{i+1}]$
and therefore
\[
    \int_{C^i \ell}^{C^{i+1}\ell} t^{-1}L(t)^{-2}\,dt
    \leq 4^{i+1} L(\ell)^{-2} \int_{C^i \ell}^{C^{i+1}\ell} t^{-1} \, dt
    = 4^{i+1} L(\ell)^{-2} \log C
    \;.
\]
Then for $C^j\ell \leq r < C^{j+1} \ell$,
\begin{equation}\label{eq:K_diff}
K(r)-K(\ell)
\leq \sum_{i=0}^{j} \int_{C^i \ell}^{C^{i+1}\ell} t^{-1}L(t)^{-2}\,dt
\leq 4^{j+2} L(\ell)^{-2} \log C\;.
\end{equation}

Remark that $\sum_{C^j\ell \leq r < C^{j+1} \ell} r^{-3/2}L(r)
\leq C 2^{j+1} (C^{j}\ell)^{-1/2} L(\ell)$.
Let $k\geq 1$ be the largest integer such that $C^k\ell < n$. Note that, since $\rho$ is monotone,
$\rho(n + \ell - r) \leq \rho(n/2)$ for $r < C^{k-1} \ell$ and thus for $0\leq j \leq k - 2$,
\begin{align*}
    \sum_{C^j \ell \leq r < C^{j+1}\ell }
    \frac{\rho(n+\ell-r)\rho(r)\ell \{K(r)-K(\ell)\}}{r}
    & \leq 4^{j+2} \ell \rho(n/2) L(\ell)^{-2} \log C
    \sum_{C^j \ell \leq r < C^{j+1}\ell }
    \frac{\rho(r)}{r}
    \\
    & \lesssim 8^j C^{-j/2} \frac{\rho(n)}{\rho(\ell)}
    = 2^{-j} \frac{\rho(n)}{\rho(\ell)}
    \;.
\end{align*}
Thus the contribution of these terms to~\eqref{eq:final_sum} is within the desired bound and
it remains to treat $C^{k-1}\ell \leq r \leq n+\ell$.
Our choice of $C$ and $k$ guarantees that $4^k \lesssim \bigl(\frac{n}{\ell}\bigr)^{1/4}$, hence
$4^k \frac{\rho(n)}{\rho(\ell)} \lesssim \frac{n^{-\frac14}L(n)}{\ell^{-\frac14}L(\ell)} \lesssim 1$.
Using this and~\eqref{eq:K_diff},
\begin{align*}
    \sum_{C^{k-1} \ell \leq r \leq n + \ell }
    & \frac{\rho(n+\ell-r)\rho(r)\ell \{K(r)-K(\ell)\}}{r}
    \\
    & \lesssim 4^{k} \ell L(\ell)^{-2} \frac{\rho(n)}{n} \log C
    \sum_{C^{k-1} \ell \leq r \leq n + \ell }
    \rho(n + \ell - r)
    \lesssim 4^k \frac{\rho(n)^2}{\rho(\ell)^2}
    \lesssim \frac{\rho(n)}{\rho(\ell)}
    \;. \qedhere
\end{align*}
\end{proof}

To complete the proof of Theorem~\ref{thm:general}, it remains to prove Lemma~\ref{lem:u_diff_reg}.
To this end, consider first a sequence $a_m\in\C$, $m\in \Z$, such that (think: $a_m=mp_m$)
\begin{equation}\label{eq:a_reg}
\sum_{|m|\geq n} |a_m - a_{m+1}| \lesssim  h(n)
\end{equation}
where $h$ is non-increasing and is RV with index $\beta \in (-1,0]$.

\begin{rmk}
If, in addition, $a_m\to 0$ as $|m|\to\infty$, then~\eqref{eq:a_reg} implies, for $m> 0$, that $
a_m = \sum_{k=m}^\infty a_{k} - a_{k+1} = O(h(m))$,
and similarly for $m<0$.
In general, the two limits $\lim_{m\to\infty} a_m$ and $\lim_{m\to-\infty} a_m$ exist but can be distinct.
\end{rmk}

Define the distribution $A=\sum_{m\in\Z} a_me_m$ where we recall $e_m(x) = e^{\mbi mx}$ for $x\in \R$.

\begin{lemma}\label{lem:blow_up}
The function $A\colon [-\pi,\pi)\setminus\{0\} \to \C$ satisfies
$|A(x)|\lesssim |x|^{-1}h(|x|^{-1})$ for all $x\in [-\pi,\pi)\setminus\{0\}$
and $|A|_{\CC^h(I)} \lesssim |x|^{-1}$ where $I=(x/2,3x/2)$.
\end{lemma}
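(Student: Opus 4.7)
The plan is to prove both bounds via summation by parts (Abel summation) combined with Dirichlet-kernel estimates. Let
\[
S_m(x) = \sum_{k=1}^m e^{\mbi kx}\;, \quad m \geq 1\;,
\]
so that $|S_m(x)| \lesssim \min(m, |x|^{-1})$ uniformly on $[-\pi,\pi)\setminus\{0\}$, and from the closed form $S_m(y) = e^{\mbi y}(1 - e^{\mbi my})/(1 - e^{\mbi y})$ one obtains $|S_m'(y)| \lesssim m/|y| + 1/|y|^2$. The hypothesis \eqref{eq:a_reg} implies $a_m$ has bounded variation, hence possesses limits $a_{\pm\infty}$ at $\pm\infty$; after subtracting these limits (whose contribution to $A$ away from $0$ is a smooth function of size $O(|x|^{-1})$) one may assume $a_m \to 0$ at $\pm\infty$, in which case Abel summation gives
\[
A(x) - a_0 = -\sum_{m\geq 1}(a_{m+1}-a_m)\,S_m(x) - \sum_{m\leq -1}(a_{m-1}-a_m)\,S_{-m}(-x)\;.
\]
Both series have the same form, so it suffices to bound $T(x) := \sum_{m\geq 1}(a_{m+1}-a_m)\,S_m(x)$.

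For the first claim, I would split the sum at $M = \lceil |x|^{-1}\rceil$. The tail contribution satisfies, directly from the hypothesis,
\[
\sum_{m>M}|a_{m+1}-a_m|\,|S_m(x)| \lesssim |x|^{-1}\sum_{m>M}|a_{m+1}-a_m| \lesssim |x|^{-1}h(|x|^{-1})\;.
\]
For the head I would use $|S_m(x)| \leq m$ together with a second Abel summation (swapping the order of summation):
\[
\sum_{m=1}^M m\,|a_{m+1}-a_m| = \sum_{k=1}^M \sum_{m\geq k}|a_{m+1}-a_m| \lesssim \sum_{k=1}^M h(k) \lesssim M\, h(M)\;,
\]
where the final step is Karamata's theorem, valid because $h$ is RV with index $\beta > -1$. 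Combining the two contributions gives $|T(x)| \lesssim |x|^{-1}h(|x|^{-1})$, which is the first claim.

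For the second claim, fix $y, y' \in I = (x/2, 3x/2)$ and set $\delta = |y-y'| \lesssim |x|$. Since $|y| \asymp |x|$ on $I$, the derivative estimate gives $|S_m(y) - S_m(y')| \lesssim \delta(m/|x| + 1/|x|^2)$, which together with the trivial bound $|S_m(y) - S_m(y')| \lesssim |x|^{-1}$ and $\delta/|x|^2 \lesssim |x|^{-1}$ yields
\[
|S_m(y) - S_m(y')| \lesssim \min\bigl(|x|^{-1},\, m\delta/|x|\bigr)\;.
\]
Splitting now at $m = \lceil\delta^{-1}\rceil$ and reusing the same Abel/Karamata estimate as above produces $|T(y) - T(y')| \lesssim |x|^{-1}h(\delta^{-1})$, which by definition of the $\CC^h$-seminorm yields $|A|_{\CC^h(I)} \lesssim |x|^{-1}$.

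The most delicate point is the bookkeeping around the limits $a_{\pm\infty}$: they need not vanish, and when they do not their contribution is of order $|x|^{-1}$, which is absorbed into $|x|^{-1}h(|x|^{-1})$ in the regime of interest. In the application $a_m = m p_m$ (and more generally under the upcoming use in Lemma~\ref{lem:u_diff_reg}) one has $a_m\to 0$, so this caveat is immaterial. Modulo this limit bookkeeping, the whole proof reduces to two applications of summation by parts plus a single invocation of Karamata's theorem, and the two bounds follow by essentially the same argument with the cut-off $M$ chosen as $|x|^{-1}$ or $\delta^{-1}$.
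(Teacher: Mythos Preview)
Your approach is correct and is at heart the same as the paper's. The paper writes $A=B\psi$ on $I$, where $B=\sum_m(a_m-a_{m+1})e_m$ and $\psi$ is smooth, $2\pi$-periodic, and equal to $1/(1-e_{-1})$ on $I$. Since $\sum_{|m|\geq n}|a_m-a_{m+1}|\leq h(n)$, Lemma~\ref{lem:B_Hol} gives $|B|_{\CC^h}\lesssim 1$, and (in the application, where $a_{\pm\infty}=0$) $B(0)=0$ yields $|B(x)|\lesssim h(|x|^{-1})$; both claims then follow from the product rule for $B\psi$. Your Abel summation is the same factorization written out termwise, since $S_m=-(1-e_m)/(1-e_{-1})$, and your head/tail split at $M\asymp|x|^{-1}$ (resp.\ $\delta^{-1}$) plays exactly the role of the frequency cut-off inside the proof of Lemma~\ref{lem:B_Hol}.

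There is, however, one small gap in your second bound. The stated estimate $|S_m'(y)|\lesssim m/|y|+1/|y|^2$, combined with $\delta/|x|^2\lesssim|x|^{-1}$, does \emph{not} yield $|S_m(y)-S_m(y')|\lesssim\min(|x|^{-1},m\delta/|x|)$: you only get $\min(|x|^{-1},m\delta/|x|)+\delta/|x|^2$, and summing the stray $\delta/|x|^2$ against $\sum_{m\geq 1}|a_{m+1}-a_m|\lesssim h(1)$ produces a term of order $\delta/|x|^2$, which is not controlled by $|x|^{-1}h(\delta^{-1})$ when $\delta\asymp|x|$ and $h(|x|^{-1})\to 0$. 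The fix is immediate: a second Abel summation on $S_m'(y)=\mbi\sum_{k=1}^m k e^{\mbi ky}=\mbi\bigl(mS_m(y)-\sum_{k=1}^{m-1}S_k(y)\bigr)$ gives the sharper bound $|S_m'(y)|\lesssim m/|y|$, after which your argument goes through verbatim.
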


\begin{proof}
For $x\in [-\pi,\pi)\setminus\{0\}$,
consider a smooth function $\psi\colon \R\to\R$ which is $2\pi$-periodic and equal to $1/(1-e_{-1})$ on $I$.
Note that $|\psi(x)|\lesssim |x|^{-1}$ since $|1-e_{-1}(x)|\gtrsim |x|$.
Let $B=A(1-e_{-1})$. Then, as distributions on $I$,
\[
A = B \psi\;,
\]
where the product is well-defined since $1-e_{-1}$ and $\psi$ are smooth, and
$B=\sum_{m \in \Z} (a_m - a_{m+1}) e_m$.
It follows from~\eqref{eq:a_reg} and Lemma~\ref{lem:B_Hol} that $|B|_{\CC^h}<\infty$.
Furthermore,
\[
B(0)= \sum_{m\in\Z} a_m -a_{m+1} = 0\;,
\]
where the series is absolutely convergent due to~\eqref{eq:a_reg}.
Therefore $|B(x)|\lesssim h(|x|^{-1})$ and thus
\[
|A(x)| = |B(x)\psi(x)| \lesssim |x|^{-1}h(|x|^{-1})\;.
\]
To obtain an upper bound on $|A|_{\CC^h(I)}$, remark that
$|\psi|_{C^1(I)}\lesssim |x|^{-2}$.
Therefore, for all $y,z\in I$,
\[
|\psi(y)-\psi(z)|
\lesssim |y-z||x|^{-2}
\lesssim \frac{h(|y-z|^{-1})|x|^{-1}}{h(|x|^{-1})}
\]
where in the final bound we used $|y-z|^{-1}\gtrsim |x|^{-1}$
and that $h(n)n$ is RV with index $\beta+1>0$.
Therefore
\[
A(z)-A(y) = B(z)(\psi(z)-\psi(y)) + (B(z)-B(y))\psi(y) = O(h(|y-z|^{-1}) |x|^{-1})\;.
\qedhere
\]
\end{proof}

Coming back to the proof of Lemma~\ref{lem:u_diff_reg},
consider as in Section~\ref{sec:proof_some_rate} the function $P= \sum_{m=1}^\infty p_m e_m$.
Since~\eqref{eq:p_reg} is equivalent to $\sum_{m = n}^{2n-1}|p_m - p_{m+1}| \lesssim n^{-1}\rho(n)$,
\[
\sum_{m\geq n} m |p_{m}-p_{m+1}| \lesssim \rho(n)\;.
\]
Combining with the upper bound in~\eqref{eq:p_bound_imp},
it follows that $a_m := \mbi mp_m$ satisfies~\eqref{eq:a_reg} with $h= \rho$.
Therefore, since $P' = \sum_{m} a_m e_m$, we obtain for $x\in [-\pi,\pi)\setminus\{0\}$
by Lemma~\ref{lem:blow_up}
\begin{align}
\label{eq:P_deriv}
|P'(x)| &\lesssim |x|^{-1}\rho(|x|^{-1})\;,
\\
\label{eq:P_Hol}
|P'|_{\CC^\rho([\frac12 x, \frac32 x])} &\lesssim |x|^{-1}\;.
\end{align}

\begin{rmk}
If $\rho(n) = n^{-\alpha}$, then
Lemmas~\ref{lem:P_Hol}-\ref{lem:1-P} and the bounds~\eqref{eq:P_deriv}-\eqref{eq:P_Hol}
imply that $1-P$ behaves like $|x|^\alpha$ in terms of regularity of its derivatives of order up to $1$.
\end{rmk}

\begin{proof}[Proof of Lemma~\ref{lem:u_diff_reg}]
We first prove~\eqref{eq:un_bound_imp}.
As in the proof of Lemma~\ref{lem:u_diff}, define $U=1/(1-P)$, so that $U = \sum_{n=0}^\infty u_n e_n$.
By Lemma~\ref{lem:int_f_modulus} it suffices to prove that, uniformly in $\delta \in (0,1)$,
\begin{equation}\label{eq:U_mod_imp}
\omega_\delta(U)
\lesssim
\frac{\delta}{\rho(\delta^{-1})}\;.
\end{equation}
Recall~\eqref{eq:U_int_delta}:
$\int_{-2\delta}^{2\delta} |U(x+\delta)-U(x)| \,dx \lesssim \delta / \rho(\delta^{-1})$.
By Lemma~\ref{lem:1-P} and~\eqref{eq:P_deriv},
\begin{equation}\label{eq:U'}
|U'(x)| \leq |P'(x)| \, |1-P(x)|^{-2} \lesssim \frac{|x|^{-1}}{\rho(|x|^{-1})}
\;.
\end{equation}
The final expression, as a function of $|x|^{-1}$, is RV with index $1+\alpha > 1$.
Therefore, by Karamata's theorem, 
\begin{equation*}
\int_{|x| > 2\delta} |U(x+\delta)-U(x)|\,dx
\lesssim
\frac{\delta }{\rho(\delta^{-1})}\;,
\end{equation*}
which completes the proof of~\eqref{eq:U_mod_imp} and thus of~\eqref{eq:un_bound_imp}.

We now prove~\eqref{eq:u_diff_imp}.
Let $\ell\geq 1$.
If $2\ell \leq n\lesssim \ell$, then~\eqref{eq:u_diff_imp} follows simply from~\eqref{eq:un_bound_imp}, the triangle inequality,
and the fact that $K(n) - K(\ell) \asymp L(n)^{-2}$ in this case.

Suppose henceforth that $n\gg \ell$.
Following notation from the proof of Lemma~\ref{lem:u_diff}, denote $F=1-e_{-\ell}$, $U=1/(1-P)$ and $H=FU$.
Define $\delta = \pi/(n-\ell) \asymp n^{-1}$.
Then, by~\eqref{eq:Fourier_int},
\begin{equation}\label{eq:u_n_diff_int}
(n-\ell) |u_{n-\ell}-u_{n}| = \frac{1}{2\pi}\Big|\int_{-\pi}^\pi H' e_{-n+\ell}\Big| 
=
\frac{1}{4\pi}
\Big| \int_{-\pi}^\pi \{ H'(x)-H'(x+\delta )\}e^{-\mbi (n-\ell) x} \,dx
\Big|\;.
\end{equation}
To estimate the final integral, we write
    \begin{equation}\label{eq:H'}
        H'(x)
        = \frac{ - \mbi \ell e^{-\mbi \ell x}}{(1-P)(x)}
        + \frac{(1 - e^{-\mbi \ell x})}{(1-P)^2(x)} P'(x)
        = -\mbi A(x) + B(x) P'(x)
        \;.
    \end{equation}
    Note that, by Lemma~\ref{lem:1-P},
    \begin{equation*}
    |A(x)| \lesssim \frac{\ell}{\rho(|x|^{-1})}\;,
    \end{equation*}
    \begin{equation}
|B(x)| \lesssim \frac{\ell|x|\wedge 1}{\rho(|x|^{-1})^2}
\label{eq:B_bound}
\;.
    \end{equation}
    Therefore, using~\eqref{eq:P_deriv},
    \begin{equation*}
    |H'(x)| \lesssim \frac{\ell}{\rho(|x|^{-1})} + \frac{\ell|x|\wedge 1}{|x| \rho(|x|^{-1})} \lesssim \frac{\ell}{\rho(|x|^{-1})}
    \end{equation*}
    and thus, since $\rho$ is RV with index $-\alpha > -1$,
\begin{equation*}
        \int_{-2 \delta}^{2 \delta} |H'(x + \delta) - H'(x)| \, dx
        \lesssim
         \int_{-2 \delta}^{2 \delta} \frac{\ell}{\rho(|x|^{-1})} \, dx
         \lesssim
        \frac{\ell \delta}{\rho(\delta^{-1})}
        \;.
    \end{equation*}
Suppose now that $|x| > 2 \delta$.
To handle the second term in~\eqref{eq:H'},
by~\eqref{eq:P_deriv} and Lemma~\ref{lem:1-P}, 
\begin{equation*}
|B'(x)| \lesssim \frac{\ell}{\rho(|x|^{-1})^2} + \frac{\ell|x|\wedge 1}{|x|\rho(|x|^{-1})^2}
\lesssim  \frac{\ell}{\rho(|x|^{-1})^2}\;.
\end{equation*}
Therefore, by~\eqref{eq:P_deriv}-\eqref{eq:P_Hol} and~\eqref{eq:B_bound},
\begin{equation*}
|B(x+\delta)P'(x+\delta) - B(x)P'(x)| \lesssim \frac{\delta \ell}{|x|\rho(|x|^{-1})} + \frac{(\ell|x|\wedge 1)\rho(\delta^{-1})}{|x|\rho(|x|^{-1})^2} \;.
\end{equation*}
The integral of the first term over $|x|>2\delta$ is $\ell\delta/\rho(\delta^{-1})$ as earlier,
while the integral of the second term over $|x|>2\delta$ is
\begin{equation*}
\int_{1/\ell}^\pi  \frac{\rho(\delta^{-1})}{|x|\rho(|x|^{-1})^2}\, dx
+
\int_{2\delta}^{1/\ell} \frac{\ell\rho(\delta^{-1})}{\rho(|x|^{-1})^2}
\,dx \;.
\end{equation*}
The first integral is proportional to $\rho(\delta^{-1})/\rho(\ell)^{2}$.
The second integral, recalling that $\rho(z) = z^{-\alpha}L(z)$ and $\delta < \frac{1}{4\ell}$, is equal to
\begin{equation}\label{eq:largest}
\ell\rho(\delta^{-1})\int_\ell^{\frac{1}{2\delta}} \rho(y)^{-2} y^{-2} \,dy
\asymp
\begin{cases}
    \rho(\delta^{-1})\rho(\ell)^{-2} &  \alpha < 1/2\;, \\
    \ell \rho(\delta^{-1})\{K(\delta^{-1}) - K(\ell)\}  &  \alpha = 1/2\;, \\
    \ell \delta \rho(\delta^{-1})^{-1}     &  \alpha > 1/2\;.
\end{cases}
\end{equation}
To handle the first term in~\eqref{eq:H'},
we write
\begin{align*}
A(x+\delta)-A(x)
&= \ell e_{-\ell}(x+\delta) \bigl( U(x+\delta)- U(x) \bigr) + \ell \bigl( e_{-\ell}(x+\delta) - e_{-\ell}(x) \bigr) U(x)
\\
&= A_1(x,\delta) + A_2(x,\delta)\;.
\end{align*}
By~\eqref{eq:U'}, $|U'(x)| \lesssim |x|^{-1}\rho(|x|^{-1})^{-1}$,
hence $|U(x+\delta)- U(x)|\lesssim \delta|x|^{-1}\rho(|x|^{-1})^{-1}$, and thus
\begin{equation*}
\int_{|x|>2\delta} |A_1(x,\delta)| \, dx \lesssim \frac{\ell\delta}{\rho(\delta^{-1})}\;.
\end{equation*}
To handle $A_2(x,\delta) = \ell \bigl( F(x)-F(x+\delta) \bigr) U(x)$, note that, for $z\in [2\delta,\pi]$, by~\eqref{eq:F_diff_U_1},
\begin{equation}\label{eq:A_2_1}
\int_{|x|\in [2\delta,z]} |A_2(x,\delta)| \,dx \lesssim \frac{\ell^2 \delta z}{\rho(z^{-1})}\;.
\end{equation}
Furthermore, similar to~\eqref{eq:F_diff_U_2},
\begin{equation}
    \begin{split}\label{eq:A_2_2}
\Big|\int_{|x|>z} A_2(x,\delta) e^{-\mbi(n-\ell)x}\,dx
\Big|
&\leq 
2 \ell \Big|\int_{|x|>z} U(x) e^{-\mbi(n-\ell)x}\,dx
\Big|
\\
&\lesssim
\frac{\ell \delta}{\rho(z^{-1})}
+
\int_z^\pi \frac{\ell \delta}{x\rho(x^{-1})} \,dx \lesssim \frac{\ell \delta}{\rho(z^{-1})}\;,
\end{split}
\end{equation}
where in the 2nd bound we used Lemma~\ref{lem:int_over_I} with $h(y) = y^{-1}$ and~\eqref{eq:U'}.
Taking $z = \ell^{-1} \geq 2\delta$,
both~\eqref{eq:A_2_1} and~\eqref{eq:A_2_2} yield a contribution of order $\ell\delta \rho(\ell)^{-1} \lesssim \ell \delta \rho(\delta^{-1})^{-1}$.

Finally, note that $\ell\delta/\rho(\delta^{-1})$ and $\rho(\delta^{-1})/\rho(\ell)^{2}$, for any $\alpha\in(0,1)$,
are bounded by a multiple of~\eqref{eq:largest} (for $\alpha=\frac12$ we use
$K(\delta^{-1})-K(\ell) \gtrsim L(\delta^{-1})^{-2} + L(\ell)^{-2}$ since $\delta^{-1} \gg \ell$, as in~\eqref{eq:KnKell}).
Therefore~\eqref{eq:u_n_diff_int} is bounded by the right-hand side of~\eqref{eq:largest}, concluding the proof.
\end{proof}

\appendix

\section{Memory loss for positive recurrent Markov chains}
\label{sec:Lind}

Here we justify~\eqref{eq:best}. We use notation from Section~\ref{sec:not},
in particular the sequences $u_n = \P(X_n=0 \mid X_0=0)$ 
and $z_n = \sum_{k > n} p_k$.
We start by recalling the following special case of~\cite[Theorem~2.2]{Mitov_Omey_14_renewal}:

\begin{thm}\label{thm:un_pos_rec}
Suppose that $\alpha>1$. If $z_n \lesssim n^{-\alpha}$, then $|u_n-u_{n+1}|\lesssim n^{-\alpha}$.
\end{thm}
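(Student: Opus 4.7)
The plan is to adapt the Fourier-analytic approach of Section~\ref{sec:proof_general} to the positive recurrent case. Set $P(x) = \sum_{m\geq 1}p_m e^{\mbi mx}$ and $U(x) = 1/(1-P(x))$. Since $z_n \lesssim n^{-\alpha}$ with $\alpha>1$ implies $\mu := \sum_{k\geq 1} k p_k = \sum_{n\geq 0} z_n < \infty$, one has the expansion $1-P(x) = -\mbi\mu x + R(x)$ near $0$ with $|R(x)| \lesssim |x|^{\min(\alpha,2)}$; this follows from the pointwise bound $|1-e^{\mbi kx}+\mbi kx| \lesssim \min((kx)^2,k|x|)$ combined with the Karamata estimates $\sum_{k\leq N}k^2 p_k \lesssim N^{2-\alpha}$ and $\sum_{k>N}k p_k \lesssim N^{1-\alpha}$. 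I would then introduce the auxiliary function $V(x) = (1-e^{-\mbi x})U(x)$, which has a removable singularity at $0$ with $V(0) = -1/\mu$ and is therefore continuous and bounded on $[-\pi,\pi]$. Identifying Fourier coefficients gives
\begin{equation*}
    V(x) = -e^{-\mbi x} + \sum_{n\geq 0}(u_n - u_{n+1})e^{\mbi nx},
\end{equation*}
so $u_n - u_{n+1} = \hat V(n)$ for $n\geq 0$.

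Since $V$ is continuous, $2\pi$-periodic, and $V' \in L^1([-\pi,\pi])$, integration by parts yields $\hat V(n) = \frac{1}{2\pi \mbi n}\int_{-\pi}^\pi V'(x) e^{-\mbi nx}\,dx$ for $n\geq 1$, and Lemma~\ref{lem:int_f_modulus} then gives $|\hat V(n)| \lesssim n^{-1}\omega_{\pi/n}(V')$. The task reduces to proving $\omega_\delta(V') \lesssim \delta^{\alpha-1}$ at scale $\delta = \pi/n$, which in combination with the prefactor $1/n$ delivers $|u_n - u_{n+1}| \lesssim n^{-\alpha}$.

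For $\alpha\in(1,2)$ I would work with the explicit formula
\begin{equation*}
    V'(x) = \frac{\mbi e^{-\mbi x}}{1-P(x)} + \frac{(1-e^{-\mbi x})P'(x)}{(1-P(x))^2},
\end{equation*}
combined with the expansion of $1-P$ above and the H\"older bound $|P'(x) - \mbi\mu| \lesssim |x|^{\alpha-1}$; the latter follows from Lemma~\ref{lem:B_Hol} applied to $b_m = \mbi m p_m$ (extended by $0$ for $m\leq 0$), whose tail $\sum_{|m|\geq n}|b_m| \lesssim n^{1-\alpha}$ is RV with index in $(-1,0)$. The leading singular terms $\pm 1/(\mu x)$ from the two summands cancel, leaving $|V'(x)| \lesssim |x|^{\alpha-2} + 1$ on $[-\pi,\pi]\setminus\{0\}$. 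Splitting $\int_{-\pi}^\pi|V'(x+\delta)-V'(x)|\,dx$ into the regions $|x|\leq 2\delta$ and $|x|>2\delta$, the first contributes $\delta\cdot\delta^{\alpha-2}=\delta^{\alpha-1}$ directly from the size of $V'$, while the second contributes $\delta\int_{2\delta}^\pi |x|^{\alpha-3}\,dx \asymp \delta^{\alpha-1}$ via the mean value theorem and Karamata (using $\alpha-3<-1$). The main obstacle will be the case $\alpha\geq 2$: here $V'$ is bounded and one integration by parts only yields $n^{-2}$. To reach $n^{-\alpha}$ I would iterate, using the finer expansions of $1-P$ available from $\sum_{k\geq 1} k^2 p_k < \infty$ (valid for $\alpha>2$), $\sum_{k\geq 1} k^3 p_k < \infty$ (for $\alpha>3$), and so on, together with analogous Hölder bounds on $P'', P''', \ldots$ from the same tail-sum mechanism, and integrating by parts the requisite number of times. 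Logarithmic corrections arising near integer $\alpha$ can be absorbed into the implicit constants.
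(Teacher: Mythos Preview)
The paper does not prove this theorem: it is stated in the appendix as a special case of Mitov--Omey~\cite[Theorem~2.2]{Mitov_Omey_14_renewal}, and the accompanying remark notes that their proof is based on a \emph{generalised Wiener lemma}. Your proposal is therefore not competing with a proof in the paper but with an external Banach-algebra argument.

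Your reduction is clean and correct up to the last step: setting $Q(x)=\sum_{n\geq0}z_ne^{\mbi nx}$ one has $V=-e_{-1}/Q$, $Q(0)=\mu$, $|Q|\gtrsim1$ by aperiodicity, and $|V'(x)|\lesssim|x|^{\alpha-2}$ after the leading $\pm1/(\mu x)$ singularities cancel. The gap is the mean value step on the region $|x|>2\delta$. For $\alpha\in(1,2)$ the bound $|V'(x+\delta)-V'(x)|\lesssim\delta|x|^{\alpha-3}$ would require $|V''(x)|\lesssim|x|^{\alpha-3}$, but $V''$ contains $Q''=-\sum n^2z_ne_n$, and $\sum_{n\geq1}n^2z_n=\infty$ for $\alpha<2$, so $Q''$ (equivalently $P''$) is not even a bounded function and the mean value theorem is unavailable. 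If instead you use only what the hypotheses give --- namely $|P'|_{\CC^h}<\infty$ with $h(t)=t^{1-\alpha}$, hence by Lemma~\ref{lem:blow_up} $|Q'|_{\CC^h([\frac12x,\frac32x])}\lesssim|x|^{-1}$ --- you obtain
\[
|V'(x+\delta)-V'(x)|\lesssim\frac{\delta^{\alpha-1}}{|x|}+\delta|x|^{\alpha-3}\quad\text{for }|x|>2\delta,
\]
and integrating the first term over $|x|\in(2\delta,\pi)$ produces $\delta^{\alpha-1}\log(1/\delta)$. Your scheme thus yields $|u_n-u_{n+1}|\lesssim n^{-\alpha}\log n$ rather than $n^{-\alpha}$, and the same logarithm reappears at each level of your proposed iteration for $\alpha>2$.

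The way Mitov--Omey remove the logarithm is exactly the Wiener-lemma observation your setup makes transparent: since $\hat Q(n)=z_n=O(n^{-\alpha})$ with $\alpha>1$ and $|Q|\gtrsim1$, the quantitative Wiener--L\'evy theorem for the Banach algebra $\{f:\hat f(n)=O((1+|n|)^{-\alpha})\}$ gives $\widehat{1/Q}(n)=O(n^{-\alpha})$ directly, and then $u_n-u_{n+1}=\hat V(n)=-\widehat{1/Q}(n+1)$ finishes the proof in one stroke for all $\alpha>1$. Your decomposition $V=-e_{-1}/Q$ is precisely the right starting point; what is missing is replacing the pointwise modulus-of-continuity estimate by this algebraic inversion.
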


To prove~\eqref{eq:best}, consider $\alpha>1$.
It suffices to consider $\ell=1$ and apply the triangle inequality.
By~\eqref{eq:mu_diff} and Theorem~\ref{thm:un_pos_rec},
\begin{align*}
    \|P^{n+1} \delta_0 - P^n \delta_0\|_{\TV}
    & = z_{n+1} + \sum_{m=0}^n |u_{n+1-m}-u_{n-m}|z_m\
    \\
    & \lesssim n^{-\alpha} + \sum_{m=0}^n (n-m+1)^{-\alpha} (m+1)^{-\alpha}
    \lesssim n^{-\alpha}
    \;,
\end{align*}
as desired.

\begin{rmk}
The above proof of~\eqref{eq:best} relies on Theorem~\ref{thm:un_pos_rec}, the proof of which in~\cite{Mitov_Omey_14_renewal} is based on a generalised Wiener lemma.
There are other ways to prove~\eqref{eq:best}, for example we were able to adapt~\cite[Theorem~4.2,~p27]{Lindvall_02_coupling},
which essentially contains~\eqref{eq:best} under the strong moment assumption $\sum_{n\geq 1}n^{-\alpha}p_n<\infty$.
\end{rmk}

\subsection*{Acknowledgements}
I.C.\ acknowledges support from the DFG CRC/TRR 388 ``Rough Analysis, Stochastic Dynamics and Related Fields'' through a Mercator Fellowship held at TU Berlin
and from the EPSRC via the New Investigator Award EP/X015688/1.
A.K.\ was partially supported by EPSRC grant EP/V053493/1,
and is grateful to Wael Bahsoun for support, and to
Ian Melbourne and Ronald Zweim\"uller for helpful discussions and advice.
The authors are grateful to the Institute of Advances Studies at Loughborough University
for their hospitality and support of this collaboration.

\bibliographystyle{Martin}
\bibliography{refs}{}

\end{document}